\documentclass{amsart}
\numberwithin{equation}{section}

\usepackage{graphicx} 

\usepackage{a4wide}
\usepackage{color}
\usepackage{hyperref}
\usepackage{comment}
\usepackage[english]{babel} 
\usepackage{amsmath}
\usepackage{amsfonts}
\usepackage{amssymb}
\usepackage{amsthm}
\usepackage{comment}
\usepackage{graphics}
\usepackage{enumitem}
\usepackage{pgf,tikz}
\usetikzlibrary{patterns,calc}
\usepackage{cases}
\usepackage{subfigure}  
\usepackage{xfrac}

\renewcommand{\arraystretch}{1.3}

\usepackage{mathtools}

 \usepackage[utf8]{inputenc}   
 \usepackage[T1]{fontenc}

\newcommand{\R}{\mathbb{R}}	
\newcommand{\N}{\mathbb{N}} 
\newcommand{\e}{\varepsilon}

\def\Xint#1{\mathchoice
  {\XXint\displaystyle\textstyle{#1}}%
  {\XXint\textstyle\scriptstyle{#1}}%
  {\XXint\scriptstyle\scriptscriptstyle{#1}}%
  {\XXint\scriptscriptstyle\scriptscriptstyle{#1}}%
  \!\int}
\def\XXint#1#2#3{{\setbox0=\hbox{$#1{#2#3}{\int}$}
    \vcenter{\hbox{$#2#3$}}\kern-.5\wd0}}

\def\dashint{\Xint-}

\theoremstyle{plain}
\newtheorem{theorem}{Theorem}[section]
\newtheorem{proposition}[theorem]{Proposition}
\newtheorem{lemma}[theorem]{Lemma}
\newtheorem{corollary}[theorem]{Corollary}
\newtheorem{remark}[theorem]{Remark}
\newtheorem{definition}[theorem]{Definition}

\author{Alessandra De Luca}
\address{Alessandra De Luca\newline\indent
Dipartimento di Matematica e Applicazioni
\newline\indent
Università degli Studi di Milano-Bicocca
\newline\indent
Via Cozzi 55, 20125, Milano, Italy}
\email{alessandra.deluca@unimib.it}

\author{Veronica Felli}
\address{Veronica Felli\newline\indent
Dipartimento di Matematica e Applicazioni
\newline\indent
Università degli Studi di Milano-Bicocca
\newline\indent
Via Cozzi 55, 20125, Milano, Italy}
\email{veronica.felli@unimib.it}

\author{Stefano Vita}
\address{Stefano Vita\newline\indent
Dipartimento di Matematica ``Felice Casorati''
\newline\indent
Universit\`a degli Studi di Pavia
\newline\indent
Via Ferrata 5, 27100, Pavia, Italy}
\email{stefano.vita@unipv.it}

\title[Edge asymptotics for weighted Robin equations] {Edge
  asymptotics for
  weighted Robin equations \\
  on cylinders with applications to \\
  spectral fractional Laplacians }

\subjclass[2020]{35C20,35R11,35B60,35J70}
\keywords{Unique continuation; Spectral fractional Laplacian; Robin boundary condition}
\date{\today}

\begin{document}

\begin{abstract}
  For a class of weighted degenerate or singular problems on a
  cylinder, Almgren-type monotonicity methods are employed to derive
  asymptotic estimates of solutions near the edge between the base and
  the lateral surface, where a Robin boundary condition is imposed. As
  a relevant application, local asymptotics and unique continuation
  from the boundary are obtained for the Robin and Neumann spectral
  fractional Laplacians.  The local decay rates of the solutions are
  the same for both the Neumann and the Robin problems and are
  explicitly quantized by a weighted spherical spectral problem with
  some symmetry.
\end{abstract}

\maketitle


\section{Introduction}

Let $s\in (0,1)$ and $n>2s$. Let $\Omega\subset\mathbb R^n$ be an open
domain of class $C^{1,1}$ and $T>0$. Let us consider the cylinder in $\R^{n+1}$
with cross-section $\Omega$ and height $T$ and its lateral boundary:
\begin{equation}\label{eq:cil}
  \mathcal{C}_{\Omega}:=\Omega\times (0,T),\quad
  \partial_L \mathcal{C}_{\Omega}:= \partial\Omega\times [0,T).
\end{equation}
We denote the variable in $\R^{n+1}$ by $z=(x,t)\in \R^n\times\R$ and
introduce the space $H^1(\mathcal{C}_{\Omega},t^{1-2s}dz)$, defined as
the completion of $C^\infty(\overline{\mathcal{C}_{\Omega}})$ with
respect to the weighted norm
\begin{equation}\label{eq:normaH1weighted}
  \Vert \varphi\Vert_{H^1(\mathcal{C}_{\Omega},t^{1-2s}dz)}:=
  \left(\int_{\mathcal{C}_{\Omega}}t^{1-2s}(\varphi^2+|\nabla\varphi|^2)\,dz\right)^{1/2}. 
\end{equation}
Let $\alpha\geq 0$. We
focus on weak solutions $v\in H^1(\mathcal{C}_{\Omega},t^{1-2s}dz)$ to
\begin{equation}\label{eqrisoltadaV}
\begin{cases}
-\mathop{\rm div}(t^{1-2s} \nabla v)=0
&\text{in } \mathcal{C}_{\Omega}, \\
\nabla _x v\cdot\nu+\alpha v=0  & \text{on } \partial_L \mathcal{C}_{\Omega}, \\
-\lim_{t \to 0^+} t^{1-2s}\partial _t v = h(x) \mathop{\rm Tr}v & \text{on } \Omega,
\end{cases}
\end{equation} 
where $\nu$ is the outer normal vector to $\partial_L\mathcal{C}_{\Omega}$ and
$\mathop{\rm Tr}$ denotes the linear and continuous trace operator
\begin{equation}\label{passoalletracce}
    \mathop{\rm Tr}: H^1(\mathcal{C}_{\Omega},t^{1-2s}dz)\to H^s(\Omega), 
\end{equation}
being $H^s(\Omega)$ the usual fractional Sobolev space (see \cite{JLX}).
We also assume that 
\begin{equation}\label{ipotesisuh}
    h\in W^{1,p}(\Omega) \quad \text{for some $p> \frac{n}{2s}$}.
\end{equation}
By a weak solution to \eqref{eqrisoltadaV} we mean a function
  $v\in H^1(\mathcal{C}_{\Omega},t^{1-2s}dz)$ such that
  \begin{equation}\label{eq:wf}
    \int_{\mathcal C_\Omega}t^{1-2s} \nabla v\cdot\nabla\varphi \,dz+
    \alpha\int_{ \partial_L \mathcal{C}_{\Omega}}t^{1-2s}
    v\varphi\,dS=\int_\Omega h \mathop{\rm Tr}v \mathop{\rm
      Tr}\varphi\,dx
      \end{equation}
            for every $\varphi \in C^\infty_{\rm
              c}(\overline{\Omega}\times [0,T))$,
      where $dS$ denotes the volume element on the
        $n$-dimensional lateral surface
        $\partial\Omega\times (0,+\infty)$. We observe that there
        exists a natural linear and continuous trace operator
        \begin{equation*}
          \gamma_L: H^1(\mathcal{C}_{\Omega},t^{1-2s}dz)\to
          L^2(\partial_L\mathcal{C}_{\Omega},t^{1-2s}dS);
        \end{equation*}
        therefore, the second integral in \eqref{eq:wf} is
        well-defined for all
        $v,\varphi \in H^1(\mathcal{C}_{\Omega},t^{1-2s}dz)$. For
        notational simplicity, in \eqref{eq:wf} we are writing
        $v,\varphi$ instead of $\gamma_L(v),\gamma_L(\varphi)$.

In this context, we are interested in proving the validity of the
strong unique continuation principle for solutions to
\eqref{eqrisoltadaV} at any edge point
\begin{equation}\label{eq:edgepoint}
z_0=(x_0,0),\quad \text{with }x_0\in\partial\Omega.
\end{equation}
The study of unique continuation properties at boundary points has attracted
considerable attention in recent years. This is largely because it
proves more challenging than the corresponding analysis at interior
points, due to both the effect of the boundary geometry and a possible
lack of regularity of solutions.  In this regard, we mention
\cite{AdoEsc97, AdoEscKen95, Del26, DelFel21, FelFer13, KN, Tolsa}.
In our setting, the matter is particularly subtle because $z_0$ is
located on a boundary edge of the cylindrical domain, specifically on
the corner where the base and the lateral surface meet.

Our approach relies on the Almgren monotonicity argument, pioneered by
Garofalo and Lin in \cite{GarLin86} to derive doubling estimates and,
consequently, the strong unique continuation property for perturbed
second-order elliptic equations, with possibly very singular
potentials.  Adapting these techniques to prove unique continuation
from the boundary poses several technical challenges, also depending
on the type of boundary conditions being considered; in particular, in
the case of the Neumann problem, the ability to perform domain
transformations that ensure the vanishing of the conormal boundary
derivative is crucial for deriving monotonicity formulas. For this
reason, in the study of Robin problems carried out in
\cite{Li2025,Li-Wang,Zhu15}, it proved useful to construct an
auxiliary function that simplifies Robin problems into homogeneous
conormal ones. In the present work, we draw inspiration from this
strategy, reducing our problem to one with homogeneous conormal
boundary conditions, thus enabling an even reflection through the
lateral surface of the cylinder. To this aim, in Section \ref{sez2} we
introduce a suitable change of variables, involving the distance
function from $\partial\Omega$. This transformation maps problem
\eqref{eqrisoltadaV}, which features a Robin boundary condition on the
lateral surface of the cylinder, into a boundary value problem with a
homogeneous Neumann condition on the same surface (see problem
\eqref{eqrisoltadaW}), at the cost of generating additional terms
within the equation.

The monotonicity formula we aim to develop investigates the behavior
of the frequency (local energy divided by the mass) of solutions near
the point $z_0=(x_0,0)$ lying on the boundary of the base
$\Omega\times\{0\}$ of the cylinder; therefore, the geometry of
$\partial\Omega$ may significantly interfere with the argument. To
overcome such a difficulty, before carrying out the aforementioned
even reflection, we apply a local diffeomorphism that straightens
$\partial\Omega$ in a neighborhood of $x_0$. As a result, we are led
to deal with a differential operator involving a non-identity
coefficient matrix, whose regularity is closely tied to the regularity
of the domain $\Omega$, see problem \eqref{problemaraddrizzato}.

After an even reflection through the straightened lateral surface of
the cylinder, the derivation of a Pohozaev-type identity becomes quite
straightforward, since the resulting problem falls within the
assumptions considered in \cite{FelSic22}.  Subsequently, we prove
that the Almgren frequency function $\mathcal{N}(r)$ defined in
\eqref{N} is bounded and has a finite limit as $r\to 0^+$.
Typically, this would be enough to deduce the strong unique
continuation principle via suitable doubling conditions; however, we
go further and in Section \ref{seclocal} we provide a classification
of all possible vanishing orders of nontrivial solutions, by means of
a blow-up analysis. We emphasize that the limit profiles turn out to
be nontrivial homogeneous polynomials, which can be described in terms
of the eigenfunctions of a related spherical eigenvalue problem, whose
properties are discussed in Section \ref{sect3}.

The following theorem is the first main result of the present paper.

\begin{theorem}\label{thm:1} Let
    $s\in (0,1)$, $n>2s$, and $\Omega\subset\mathbb R^n$ be an open
    domain of class $C^{1,1}$.  For some $\alpha\geq0$ and $h$
    satisfying \eqref{ipotesisuh}, let
  $v\in H^1(\mathcal{C}_\Omega,t^{1-2s}dz)$ be a nontrivial weak
  solution to \eqref{eqrisoltadaV} in the sense clarified in
  \eqref{eq:wf}.  For every $z_0=(x_0,0)\in \partial\Omega\times\{0\}$,
  there exist $m_0\in\mathbb N$ (which is necessarily even if
  $n=1$) and a nontrivial homogeneous polynomial $P$ of degree $m_0$
  such that $\mathop{\rm div}(|t|^{1-2s}\nabla P)=0$ in $\R^{n+1}$,
    \begin{align*}
   &P(x,t)\text{ is invariant under reflections across both the hyperplanes}\\
         &\{(x,t)\in\R^{n+1}:x\cdot {\mathbf N}_0=0\}\quad\text{and}\quad
           \{(x,t)\in\R^{n+1}:t=0\}
  \end{align*}
  and, for any $\delta>0$,
    \begin{equation*}
      \frac{v(z_0+\lambda z)}{\lambda^{m_0}}\to P(z)\quad
      \text{as $\lambda\to 0^+ $}
    \end{equation*}
    in $H^1((C_\delta\times(0,T))\cap
      B_1,t^{1-2s}dz)$ and in $C^{0,\tau}_{\mathrm{loc}}((\overline{
      C_\delta}\setminus\{0\})\times[0,\infty))$
      for some
      $\tau\in(0,1)$, where
$B_1:=\{z\in \R^{n+1}: |z|<1\}$,     
      \begin{equation}\label{eq:def_cono}
        C_\delta:=\big\{x\in\R^n  : x\cdot {\mathbf N}_0
        <-\delta|x-(x\cdot {\mathbf N}_0) {\mathbf N}_0|\big\}
    \end{equation}
    and ${\mathbf N}_0=\nu(x_0)$ is the outer normal vector to
    $\partial\Omega$ at $x_0$.
\end{theorem}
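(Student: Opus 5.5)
The proof follows the strategy outlined in the introduction; the three reductions come first and the monotonicity argument after.

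\emph{Step 1: remove the Robin term.} Fix $z_0=(x_0,0)$ and let $d(x)$ be the signed distance from $\partial\Omega$, which is $C^{1,1}$ near $\partial\Omega$. Set
\[
w=e^{\alpha\,\eta(x)\, d(x)}\,v ,
\]
with $\eta$ a cutoff equal to $1$ near $x_0$. Since $\nabla_x d\cdot\nu=-1$ on $\partial\Omega$ (or $+1$, depending on the sign convention), the exponential turns $\nabla_x v\cdot\nu+\alpha v=0$ into $\nabla_x w\cdot\nu=0$. The price is lower-order terms in the equation: a drift $t^{1-2s}b\cdot\nabla w$ and a potential $t^{1-2s}c\,w$, with $b\in C^{0,1}$ and $c\in L^\infty$.

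\emph{Step 2: flatten and reflect.} Apply a $C^{1,1}$ diffeomorphism $\Phi$ that straightens $\partial\Omega$ near $x_0$, acts only in $x$, satisfies $D\Phi(x_0)=I$, and maps $\Omega$ locally to $\{y\cdot\mathbf N_0<0\}$. Choosing $\Phi$ so that the conormal direction is preserved (for instance, built from $d$ and a local chart), the new matrix $A(y)$ is Lipschitz with $A(0)=I$, and the lateral condition becomes a homogeneous conormal condition on the flat piece. Then reflect evenly across $\{y\cdot\mathbf N_0=0\}$. The reflected function solves, weakly in a half-ball $B_r^+=\{t>0\}$,
\[
-\mathop{\rm div}(t^{1-2s}A\nabla W)+t^{1-2s}(\tilde b\cdot\nabla W+\tilde c W)=0,\qquad -\lim_{t\to0^+} t^{1-2s}\partial_t W=\tilde h\,W,
\]
with $A$ Lipschitz (the even reflection keeps Lipschitz regularity because the off-diagonal normal entries vanish on the interface), $\tilde h\in W^{1,p}$, and bounded $\tilde b,\tilde c$. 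The drift can be absorbed, for example through an integrating factor or by treating it as a perturbation in the Pohozaev identity. The resulting problem falls within the framework of \cite{FelSic22}.

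\emph{Step 3: frequency, blow-up and identification.} Define $D(r)$ (energy plus boundary and lower-order terms) and $H(r)=\int_{\partial B_r^+}t^{1-2s}\mu W^2$, and set $\mathcal N=D/H$. Using the Pohozaev identity, a Hardy/trace inequality (which uses $p>n/(2s)$ to control $\int \tilde h W^2$), and the bounds $|A-I|\le Cr$, show that $\mathcal N'(r)\ge -C r^{-1+\epsilon}\mathcal N$ and obtain a finite limit $\gamma=\lim_{r\to0}\mathcal N(r)$. Unique continuation also gives $H>0$. Then perform the blow-up $W(\lambda z)/\sqrt{H(\lambda)}$. The rescalings are bounded in $H^1(B_1^+,t^{1-2s})$, and a subsequence converges to a $\gamma$-homogeneous $\Psi$ solving $\mathop{\rm div}(|t|^{1-2s}\nabla\Psi)=0$ after even reflection in $t$. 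Its restriction to the sphere is an eigenfunction of the weighted spherical problem of Section \ref{sect3}, and the symmetry in $y\cdot\mathbf N_0$ is inherited from Step 2. By the spectral classification, $\gamma=m_0\in\mathbb N$ and $\Psi$ is a homogeneous polynomial. When $n=1$, the evenness in both variables forces $m_0$ even.

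\emph{Step 4: full convergence and the normalization $\lambda^{m_0}$.} Show that $\lim_{r\to 0}r^{-2m_0}H(r)$ exists and is finite and positive. The finiteness comes from integrating $\frac{d}{dr}\log(H/r^{2\gamma})=2(\mathcal N-\gamma)/r$ together with the rate of convergence of $\mathcal N$. Positivity is the delicate part. I would try to get it from an expansion in Fourier coefficients along the eigenbasis, using ODE estimates with remainders $O(r^{-1+\epsilon})$, as in the Felli–Ferrero approach. The same coefficient argument also shows the limit profile $P$ is independent of the subsequence.

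Finally, undo the transformations. Since $D\Phi(0)=I$ and the exponential factor tends to $1$, $v(z_0+\lambda z)/\lambda^{m_0}\to P$ in the cone region $C_\delta$, which stays inside $\Omega$ after rescaling. The $C^{0,\tau}_{\rm loc}$ convergence away from the edge follows from uniform Hölder estimates for weighted equations (Jin–Li–Xiong type) applied to the rescaled family.

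\emph{Main obstacle.} I expect the hardest step to be controlling the extra terms in the monotonicity estimate: the drift from Step 1, the Lipschitz matrix, and the weight $h\in W^{1,p}$. Together with the positivity of $\lim r^{-2m_0}H$, this is where the remainders must be shown to be integrable as $r^{-1+\epsilon}$.
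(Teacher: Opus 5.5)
Your outline follows the paper's proof step for step: exponential factor, flattening plus even reflection, Almgren frequency via the Pohozaev identity (Proposition~\ref{Ppoho}), blow-up and classification through Section~\ref{sect3}, positivity of $\lim r^{-2m_0}H$ and uniqueness of the profile via the Fourier-coefficient ODE, then return to $v$ with Jin--Li--Xiong bounds. However, the step you flag as the main obstacle is left open, and your two suggested remedies are not equivalent.

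If $D(r)$ contains the lower-order terms, its derivative contains the surface integral $r^{2s-n}\int_{\partial^+B^+_r}t^{1-2s}(\widetilde{\mathbf b}\cdot\nabla_x W)W\,dS$. This involves the full gradient on $\partial^+B^+_r$ and is not bounded by $D(r)+\frac{n-2s}{2}H(r)$. ``Treating the drift as a perturbation in the Pohozaev identity'' only disposes of the bulk drift terms. The paper handles this surface term in Lemma~\ref{lemmaN'}: it proves the lower bound for $\mathcal N'$ only on $\Xi=\{\mathcal N'\le 0\}$, where $E'H\le EH'$ gives $\sqrt{rD(r)H(r)}=O(E+\frac{n-2s}{2}H)$. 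This is also where Lemmas~\ref{lemmadiimmersione} and~\ref{lemmasobolev} are needed, to isolate $\int_{\partial^+B^+_r}t^{1-2s}A\nabla w\cdot\nabla w\,dS$.

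Your integrating-factor option, on the other hand, does close the gap and is simpler. Since $\mathbf b=-2\alpha\nabla d$ is a gradient, $\tilde v$ solves $-\mathop{\rm div}(t^{1-2s}e^{2\alpha d}\nabla\tilde v)+t^{1-2s}e^{2\alpha d}a\,\tilde v=0$, with the same homogeneous conormal condition and $h$ replaced by $e^{2\alpha d}h$. After flattening, the matrix $(e^{2\alpha d}\circ\Psi)\mathcal A$ keeps the block structure, is $\mathrm{Id}+O(|x|)$, and is Lipschitz after reflection. The only lower-order surface term is then $O(r)H(r)$, so the inequality for $\mathcal N'$ holds a.e.\ without the $\Xi$ argument. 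The cost is generality: this uses the gradient structure of the drift and does not yield Remark~\ref{rem:genarale}.

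Your parenthetical recipe for the flattening map (``built from $d$ and a local chart'') does not work for a $C^{1,1}$ boundary. There are two natural readings, and both fail:
\begin{itemize}
\item Tangential coordinates constant along normals (chart coordinates of the projection $x-d\nabla d$) are only Lipschitz, because $\nabla d$ is only Lipschitz. Then $A$ is merely bounded, and the Pohozaev identity, which differentiates $A$, is lost.
\item $C^{1,1}$ coordinates such as $(x',-d(x))$ produce off-diagonal entries $\partial_j d\neq 0$ on the interface, so the evenly reflected matrix jumps.
\end{itemize}
You need the regularized Adolfsson--Escauriaza map of Proposition~\ref{p:diff-deform-boun}.

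Smaller points:
\begin{itemize}
\item With $d=\mathrm{dist}(\cdot,\partial\Omega)$, so that $\nabla d\cdot\nu=-1$, the factor must be $e^{-\alpha d}$. Your $e^{\alpha d}$ doubles the Robin coefficient instead of cancelling it.
\item The differential inequality must be written for $\mathcal N+\frac{n-2s}{2}$, which is bounded below by a positive constant, since $\mathcal N$ itself can be negative.
\item Only the one-sided rate $\mathcal N(r)-\gamma\ge -Cr^{\varepsilon}$ is available. It suffices, because it makes $\log(H(r)r^{-2\gamma})+C(r^{\varepsilon}+r)$ nondecreasing for a suitable $C$, which gives existence and finiteness of $\lim r^{-2\gamma}H(r)$.
\end{itemize}
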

We observe that the sets
  $x_0+C_\delta$ are circular semi-cones with axes passing through
  $x_0$, orthogonal to
  $\partial\Omega$ and pointing toward the interior of
  $\Omega$, see Figure \ref{f:cone}. In particular, the result established in Theorem
  \ref{thm:1} describes the asymptotic behavior of solutions along
  all directions transversal to the boundary.
\begin{figure}[ht]
\centering
 \begin{tikzpicture}[scale=0.35]
   \filldraw[fill=gray!40]  plot [smooth cycle]
 coordinates {(-2,-2) (-1,-2.5) (3,-3) (5,-3) (6,-1) (7,0) (0,2) (-2,1)
   (-2.5,0.5)};
  \draw[shift={(0,5)},densely dotted]  plot [smooth cycle]
 coordinates {(-2,-2) (-1,-2.5) (3,-3) (5,-3) (6,-1) (7,0) (0,2) (-2,1)
   (-2.5,0.5)};
 \draw[densely dotted] (4.5,-3.1) -- (4.5,1.9);
 \draw[densely dotted] (-2.5,0.7) -- (-2.5,5.7); 
 \draw[densely dotted] (7.25,-0.25) -- (7.25,4.7); 
 \draw[line width=0.4pt,color=black] (-2,-2) -- (4,-3) -- (4,2) -- (-2,3) -- cycle;
\draw[line width=0.4pt,color=black] (-2,-2) -- (2,0) --
 (2,5) -- (-2,3) -- cycle;
 \draw[line width=0.3pt,color=black] (-2,-2) -- (2,0) -- (2,5) -- (-2,3) -- cycle;
\draw[line width=0.3pt,color=black] (-2,-2) -- (4,-3) -- (4,2) --
(-2,3) -- cycle;
  \shade[ball color=magenta!15] 
    (-2,0) to[bend left=20] (0, -1)    to[bend left=20] (0, -2.3)
          to[bend left=20] (-2,0);      
  \draw (-2,0) to[bend left=20] (0,-1) 
              to[bend left=20] (0, -2.3);
  \filldraw[fill=magenta!45, draw=black] 
  (-2,-2) -- (0, -2.34) to[bend right=30] (-2,0) -- cycle;
  \node at (-2.7,-2) {\scriptsize $z_0$};
  \node at (6,-0.2) {\scriptsize $\Omega$};
  \filldraw[fill=black, draw=black] (-2,-2) circle (0.1);
\end{tikzpicture}
\caption{The set on which the convergence stated in Theorem
  \ref{thm:1} holds.}
 \label{f:cone}
\end{figure}

The most relevant application of Theorem \ref{thm:1} is an asymptotic
expansion at the origin for weak nontrivial solutions to the following
class of nonlocal equations
\begin{equation}\label{prob:nonloc}
  (-\Delta_{R,\alpha})^s u =\xi  u \quad \text{in $\Omega$},
\end{equation}
where the operator $(-\Delta_{R,\alpha})^s$ is the Robin spectral
fractional Laplacian of order $s\in(0,1)$ and coefficient $\alpha$
(see Section \ref{cinque} for a precise definition), and the potential
$\xi$ satisfies assumption \eqref{ipotesisuh}.
\begin{theorem}\label{thm:2}
  Let $s\in (0,1)$, $n>2s$, and $\Omega\subset\mathbb R^n$ be a
  bounded open domain of class $C^{1,1}$. For some $\alpha\geq0$ and
  $\xi\in W^{1,p}(\Omega)$, with $p> \frac{n}{2s}$, let $u$ be a
  nontrivial weak solution to \eqref{prob:nonloc} (in the sense
  specified in \eqref{form:deb:1} for $\alpha=0$ and in
  \eqref{form:deb:2} for $\alpha>0$).  For every
  $x_0\in\partial\Omega$, there exist $m_0\in\mathbb N$ (which is
  necessarily even if $n=1$) and a nontrivial homogeneous polynomial
  $\tilde P$ in $n$ variables of degree $m_0$ such that $\tilde P$ is
  invariant under reflection across the hyperplane
  $\{x\in\R^{n}:x\cdot {\mathbf N}_0=0\}$ and, for any $\delta>0$,
  \begin{equation}\label{convaPtilde} \frac{u(x_0+\lambda
      x)}{\lambda^{m_0}}\to \tilde P(x)\quad \text{as $\lambda\to 0^+$
      in $H^s(C_\delta\cap B'_1)$ and in $C^{0,\tau}_{\mathrm{loc}}(\overline{
      C_\delta}\setminus\{0\})$},
    \end{equation}
    for some $\tau\in(0,1)$, where
    $B'_1:= \{x\in \R^n : |x|<1\}$ and $C_\delta$ is defined in
    \eqref{eq:def_cono}.
\end{theorem}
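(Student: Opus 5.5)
We prove Theorem \ref{thm:2} by reducing it to Theorem \ref{thm:1} through the Caffarelli--Silvestre/Stinga--Torrea type extension adapted to the spectral setting on the cylinder. Let $\{(\mu_k,\phi_k)\}_k$ be the eigenpairs of $-\Delta$ on $\Omega$ with the Robin condition $\partial_\nu\phi+\alpha\phi=0$ (Neumann if $\alpha=0$); these are well defined since $\Omega$ is bounded. For $u=\sum_k u_k\phi_k$ in the natural form domain of $(-\Delta_{R,\alpha})^s$, we define
\[
U(x,t)=\sum_k u_k\,\theta(\sqrt{\mu_k}\,t)\,\phi_k(x),
\]
where $\theta$ solves $\theta''+\frac{1-2s}{t}\theta'=\theta$, $\theta(0)=1$, $\theta(\infty)=0$. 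For $\mu_0=0$ in the Neumann case we take $\theta\equiv1$. Termwise, $U$ satisfies $-\mathop{\rm div}(t^{1-2s}\nabla U)=0$ in $\Omega\times(0,\infty)$ and the Robin condition on the lateral surface. Moreover $-\lim_{t\to0^+}t^{1-2s}\partial_tU=c_s(-\Delta_{R,\alpha})^su$ with an explicit constant $c_s>0$. Standard Parseval computations give $U\in H^1(\Omega\times(0,\infty),t^{1-2s}dz)$, hence $U\in H^1(\mathcal C_\Omega,t^{1-2s}dz)$ for any fixed $T>0$, with $\mathop{\rm Tr}U=u$.

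Next we check that $U$ is a weak solution of \eqref{eqrisoltadaV} in the sense of \eqref{eq:wf} with $h=c_s\xi$, which satisfies \eqref{ipotesisuh}. Take $\varphi\in C^\infty_c(\overline\Omega\times[0,T))$ and extend it by zero to $t\ge T$. Expanding $\varphi(\cdot,t)$ in the basis $\{\phi_k\}$, we have $\int_\Omega\nabla_x\phi_k\cdot\nabla_x\phi_j\,dx+\alpha\int_{\partial\Omega}\phi_k\phi_j=\mu_k\delta_{kj}$. Integrating by parts in $t$ against $\theta$ then produces $\int_\Omega c_s(-\Delta_{R,\alpha})^s u\,\varphi(\cdot,0)\,dx$. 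Using the weak formulation \eqref{form:deb:1} or \eqref{form:deb:2} of \eqref{prob:nonloc}, this equals $\int_\Omega c_s\xi\,u\,\varphi(\cdot,0)\,dx$. This step is the main technical point. It requires justifying convergence of the series, and in the Neumann case the constant mode must be handled separately. If $u$ is nontrivial, then $U$ is nontrivial.

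We then apply Theorem \ref{thm:1} to $v=U$ at $z_0=(x_0,0)$. It yields $m_0$ and a homogeneous polynomial $P$ that is $|t|^{1-2s}$-harmonic and even in $t$ and in $x\cdot\mathbf N_0$, with $\lambda^{-m_0}U(z_0+\lambda z)\to P$ in $H^1((C_\delta\times(0,T))\cap B_1,t^{1-2s}dz)$ and in $C^{0,\tau}_{\rm loc}((\overline{C_\delta}\setminus\{0\})\times[0,\infty))$. We set $\tilde P(x):=P(x,0)$. Since the trace operator \eqref{passoalletracce} is continuous (applied on the Lipschitz set $(C_\delta\times(0,1))\cap B_1$, whose base contains $C_\delta\cap B_1'$), the $H^1$ convergence gives $\lambda^{-m_0}u(x_0+\lambda\,\cdot)\to\tilde P$ in $H^s(C_\delta\cap B_1')$. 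Restricting the Hölder convergence to $t=0$ gives the $C^{0,\tau}_{\rm loc}(\overline{C_\delta}\setminus\{0\})$ convergence, so \eqref{convaPtilde} holds. Invariance of $\tilde P$ under reflection across $\{x\cdot\mathbf N_0=0\}$ follows directly from that of $P$, and $m_0$ is even when $n=1$ by Theorem \ref{thm:1}.

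It remains to show $\tilde P\not\equiv0$. Since $P$ is even in $t$, homogeneous and $|t|^{1-2s}$-harmonic, we write $P(x,t)=\sum_{j}t^{2j}p_j(x)$. The equation gives the recursion
\[
\Delta_x p_j+(2j+2)(2j+2-2s)\,p_{j+1}=0,
\]
so every $p_j$ is determined by $p_0=\tilde P$. Therefore $\tilde P=0$ forces $P=0$, contradicting the nontriviality of $P$. The coefficient $(2j+2)(2j+2-2s)$ never vanishes because $s\in(0,1)$, so the recursion can always be solved for $p_{j+1}$.
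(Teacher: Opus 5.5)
Your proposal is correct. Its main structure coincides with the paper's proof:
\begin{itemize}
\item the spectral extension $U$ with $h=\kappa_s\xi$ (your $c_s\xi$);
\item the application of Theorem~\ref{thm:1} at $z_0=(x_0,0)$;
\item the passage to $t=0$ through continuity of the trace and restriction of the H\"older convergence.
\end{itemize}

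The genuine difference is how you show $\tilde P\not\equiv0$. The paper argues by contradiction: it extends $P$ by zero to $\{t<0\}$. Since $P(\cdot,0)\equiv0$ and $P$ is even in $t$ (so $t^{1-2s}\partial_tP\to0$), this gives a weak solution of $\mathop{\rm div}(|t|^{1-2s}\nabla\hat P)=0$ in $\R^{n+1}$ that vanishes on a half-space. That contradicts unique continuation for equations with Muckenhoupt $A_2$ weights.

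You instead use that $P$ is a polynomial even in $t$. Writing $P=\sum_j t^{2j}p_j(x)$, the equation $\Delta P+\frac{1-2s}{t}\partial_tP=0$ in $\{t>0\}$ gives exactly your recursion. Its coefficients $(2j+2)(2j+2-2s)$ never vanish, so $P$ is determined by its trace:
$P(x,t)=\sum_j \frac{(-1)^j t^{2j}}{\prod_{i=1}^{j}2i(2i-2s)}\Delta^j\tilde P(x)$.

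Comparing the two routes:
\begin{itemize}
\item Your argument is elementary and self-contained. It needs no external unique continuation theorem, and it gives an explicit reconstruction of $P$ from $\tilde P$.
\item The paper's argument uses only evenness in $t$ and the equation, not polynomiality, so it would apply to more general limit profiles.
\end{itemize}

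One small inaccuracy: in the Neumann case, $U\notin H^1(\Omega\times(0,\infty),t^{1-2s}dz)$ when $\dashint_\Omega u\neq0$, because the constant mode is not $t^{1-2s}$-integrable on $(0,\infty)$. Only $U-\dashint_\Omega u$ lies in that space. This still gives $U\in H^1(\mathcal C_\Omega,t^{1-2s}dz)$ for finite $T$, which is all you need, and test functions supported in $\{t<T\}$ are admissible in \eqref{eq:weak-neu}.
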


An immediate consequence of the previous results is the
validity of the following strong unique continuation principles from
the boundary.

\begin{corollary}\label{cor:unique}
  Let $s\in (0,1)$, $n>2s$, and $\Omega\subset\mathbb R^n$ be an
    open domain of class $C^{1,1}$. The following statements hold true.
\begin{itemize}
\item[\rm(i)] For some $\alpha\geq0$ and $h$ satisfying
    \eqref{ipotesisuh}, let $v\in H^1(\mathcal{C}_\Omega,t^{1-2s}dz)$
  be a weak solution to \eqref{eqrisoltadaV} such that, for some
    $z_0\in\partial\Omega\times\{0\}$, $v(z)= O(|z-z_0|^k)$ as
    $|z-z_0|\to 0^+$ for every $k\in\mathbb{N}$. Then $v\equiv 0$ in
  $\mathcal{C}_\Omega$.
\item[\rm (ii)] For some $\alpha\geq0$ and $\xi$ satisfying
    \eqref{ipotesisuh}, let $u$ be a weak solution to \eqref{prob:nonloc} such
  that, for some $x_0\in\partial\Omega$, $u(x)= O(|x-x_0|^k)$ as
    $|x-x_0|\to 0^+$ for every $k\in\mathbb{N}$. Then $u\equiv 0$ in
    $\Omega$.
    \end{itemize}
\end{corollary}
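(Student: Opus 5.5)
We argue by contradiction, using Theorems \ref{thm:1} and \ref{thm:2}.

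For part (i), assume $v\not\equiv0$ in $\mathcal C_\Omega$. Theorem \ref{thm:1} then gives $m_0\in\mathbb N$ and a nontrivial homogeneous polynomial $P$ of degree $m_0$ such that $\lambda^{-m_0}v(z_0+\lambda z)\to P(z)$ as $\lambda\to0^+$. This convergence holds in $C^{0,\tau}_{\rm loc}((\overline{C_\delta}\setminus\{0\})\times[0,\infty))$, hence locally uniformly there. Set $k=m_0+1$. By hypothesis there are $c>0$ and $r_0>0$ with $|v(z)|\le c|z-z_0|^{k}$ for $|z-z_0|<r_0$. For every fixed $z$ with $x\in C_\delta$ and $0<|z|\le 1$, and every $\lambda<r_0$, this gives
\[
\frac{|v(z_0+\lambda z)|}{\lambda^{m_0}}\le c\,\lambda\,|z|^{m_0+1}\to0 .
\]
So $P\equiv0$ on the open set $\{(x,t):x\in C_\delta,\ t>0,\ |z|<1\}$, which is nonempty. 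A polynomial vanishing on a nonempty open set vanishes identically, contradicting $P\not\equiv0$. Hence $v\equiv0$.

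We should check that the pointwise bound $v=O(|z-z_0|^k)$ makes sense for $v\in H^1$. Solutions are locally H\"older continuous up to the boundary away from $z_0$, which is exactly why the $C^{0,\tau}_{\rm loc}$ convergence in Theorem \ref{thm:1} is available. So $v$ has a continuous representative on the relevant region and the hypothesis can be read pointwise there. If one prefers to avoid pointwise values, the same argument works with the $H^1$ convergence: the hypothesis gives $\int_{(C_\delta\times(0,T))\cap B_1}t^{1-2s}|\lambda^{-m_0}v(z_0+\lambda z)|^2dz\le C\lambda^2\to0$. The $L^2$ part of the weighted $H^1$ norm alone then forces $P=0$ a.e. on that set, hence everywhere.

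Part (ii) is identical, using Theorem \ref{thm:2} in place of Theorem \ref{thm:1}. The blow-up $\lambda^{-m_0}u(x_0+\lambda x)$ converges to the nontrivial polynomial $\tilde P$ locally uniformly on $\overline{C_\delta}\setminus\{0\}$. It is bounded by $c\lambda|x|^{m_0+1}$, so $\tilde P$ vanishes on the open cone $C_\delta\cap B_1'$, which contradicts $\tilde P\not\equiv0$. This argument needs $\Omega$ bounded, as Theorem \ref{thm:2} requires; that is implicit in the definition of the spectral operator.

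There is no real obstacle: the whole proof rests on the nontriviality of the blow-up limit. The only delicate point is the remark above, namely that the convergence holds on a set of positive measure, an open cone. This is enough because polynomials satisfy the identity principle.
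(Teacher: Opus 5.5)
Your argument is correct and is exactly the deduction the paper intends. The paper gives no separate proof; it only calls the corollary an immediate consequence of Theorems \ref{thm:1} and \ref{thm:2}. The content of that consequence is what you wrote: vanishing of order $m_0+1$ at $z_0$ (resp.\ $x_0$) forces the nontrivial blow-up polynomial $P$ (resp.\ $\tilde P$) to vanish on an open cone, hence identically. Your observation that the weighted $L^2$ part of the $H^1$ (resp.\ $H^s$) convergence is enough is a clean way to read the $O(|z-z_0|^k)$ hypothesis without choosing a pointwise representative.
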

The present paper -- particularly regarding the results related
  to problem \eqref{prob:nonloc} -- falls
  within the research line initiated in \cite{DelFelSic23,
    DelFelVit22, DelFelVit25}, where the authors investigate the
  problem of unique continuation from the boundary in the nonlocal
  setting, adapting frequency and monotonicity methods to scenarios
  where the nonlocality of fractional operators, combined with
  boundary interference, raises significant difficulties. 
  To be more
precise, \cite{DelFelSic23} deals with the Dirichlet spectral
fractional Laplacian, whereas \cite{DelFelVit22, DelFelVit25} treat
the  fractional Laplacian $(-\Delta)^s$ defined
via the Fourier transform $\mathcal{F}$ as
\begin{equation*}
  \mathcal{F}((-\Delta)^s u)(\omega)=
  |\omega|^{2s} \mathcal{F}(u)(\omega) \quad
  \text{for any $u\in \mathcal{S}(\mathbb{R}^n)$},
\end{equation*}
where $\mathcal{S}(\mathbb{R}^n)$ denotes the Schwartz space on
$\R^n$.  We also highlight that, consistently with \cite{DelFelSic23}
and unlike the results in \cite{DelFelVit22}, the homogeneity degrees
of limit profiles are integers as a direct consequence of the
regularity of eigenfunctions of the spherical eigenvalue problem
\eqref{prob-eigenvalues}: indeed, after an even reflection through
$\{\theta_{n+1}=0\}$, they are smooth by \cite[Theorem
1.1]{SirTerVit21a}, whereas the  eigenfunctions of the
corresponding spherical eigenvalue problem studied in
\cite{DelFelVit22}  exhibit lower regularity due to the
occurrence of mixed boundary conditions. 
As far as unique continuation principles from interior points
 for fractional problems are concerned, we refer to \cite{FalFel14,FalFel15,ruland,Yu17}.

We finally observe that our approach actually applies to the more
general class of equations \eqref{eq:general}, see Remark
\ref{rem:genarale}.

\subsection*{Notation} We adopt the convention that $0\in\mathbb
N$. For every $r>0$ we denote
\begin{itemize}
    \item $B'_r:= \{x\in \R^n : |x|<r\}$;
    \item $S_r:= \{x\in \R^n : |x|=r\}=\partial B_r'$;
    \item $B_r:=\{z\in \R^{n+1}: |z|<r\}$;
    \item $B^+_r:=\{(x,t)\in B_r: t>0\}$;
    \item $\partial ^+B^+_{r} := \{(x,t)\in\partial B_r : t>0\}$;
    \item $\mathbb{S}:= \partial B_1$,
      $\mathbb{S}^+:= \partial ^+B^+_1$, $\mathbb{S}':=
      S_1=\partial B_1'$.
\end{itemize}

\section{Pohozaev-type identity and the Almgren frequency function}\label{sez2}

\subsection{Reduction to homogeneous Neumann condition on the lateral boundary}
In the present subsection, a suitable change of variables transforms
\eqref{eqrisoltadaV} into an equivalent boundary value problem,
resulting in a homogeneous Neumann condition on the lateral surface of
the cylinder and incorporating additional terms in the equation.

By translation, it is not restrictive to assume that the point $x_0$
in \eqref{eq:edgepoint} is the origin.  Henceforth, we shall assume
that
\begin{equation*}
  x_0=0\in \partial\Omega;
\end{equation*}
we will then focus on studying the behavior of the solutions to
\eqref{eqrisoltadaV} near the point $z_0=(0,0)$, which therefore lies
on the edge of the cylinder.

We introduce the distance function
$d(x):= \mathrm{dist}(x,\partial\Omega)$, which is of class
$C^{1,1}(\Omega\cap B'_R)$ for some small $R>0$, see
  e.g. \cite{NT} (without loss of generality, up to a further
dilation, we can assume that $R=1<T$).  If
$v\in H^1(\mathcal{C}_{\Omega},t^{1-2s}dz)$ is a weak solution to
problem \eqref{eqrisoltadaV}, we consider the function 
\begin{equation}\label{VinfunxdiW}
  \tilde{v}(x,t):= e^{-\alpha d(x)}v(x,t)
  \quad \text{for all $(x,t)\in \mathcal{C}_{\Omega}\cap B_1$}.
\end{equation}
Therefore, $\tilde{v}\in H^1(\mathcal{C}_{\Omega}\cap B_1,t^{1-2s}dz)$
and, since $\nabla d(x)=-\nu(x)$ for every
  $x\in\partial\Omega\cap B_1$, $\tilde{v}$ weakly solves
\begin{equation}\label{eqrisoltadaW}
\begin{cases}
- \mathop{\rm div}(t^{1-2s} \nabla \tilde{v}) + t^{1-2s}\mathbf{b}(x)\cdot \nabla _x \tilde{v} + t^{1-2s}a(x) \tilde{v}=0 &\text{in } \mathcal{C}_{\Omega}\cap B_1, \\
\nabla _x \tilde{v}\cdot\nu=0  & \text{on } \partial_L \mathcal{C}_{\Omega}\cap B_1, \\
-\lim_{t \to 0^+} t^{1-2s}\partial _t \tilde{v} = h(x)\mathop{\rm Tr}\tilde{v}
& \text{on } \Omega\cap B_1',
\end{cases}
\end{equation}
where
\begin{equation*}
  \mathbf{b}(x) := -2\alpha  \nabla d(x) \quad\text{and}\quad
  a(x):=  -\alpha \Delta d-\alpha^2 |\nabla d|^2 \quad \text{for every
    $x\in \Omega\cap B'_1$}.
\end{equation*}
The weak validity of \eqref{eqrisoltadaW} is to be understood as
follows: for every
$\varphi\in C^\infty_{\rm c}(\overline{\mathcal{C}_\Omega}\cap
  B_1)$
there holds 
\begin{equation*}
  \int_{\mathcal{C}_\Omega\cap B_1}t^{1-2s}
  \big(\nabla \tilde{v}\cdot\nabla \varphi+(\mathbf{b}(x)\cdot\nabla
  _x \tilde v)
  \varphi + a(x)\tilde{ v}\varphi\big) \,dz =
  \int_{\Omega\cap B'_1} h(x)  \mathop{\rm Tr}\tilde{v}
  \, \mathop{\rm Tr}\varphi\,dx. 
\end{equation*}
From the regularity of the distance function $d$, we immediately
deduce that
\begin{equation}\label{regularitybanda}
  \mathbf{b}\in C^{0,1}(\Omega\cap B'_1,\mathbb{R}^{n})
  \quad \text{and}\quad  a\in L^\infty(\Omega\cap B'_1). 
\end{equation}

\subsection{A diffeomorphic deformation of the boundary and an even
  reflection}\label{diffeomorphic}
In this subsection, we first straighten
$\partial \Omega \cap B'_1$, using the local diffeomorphism employed
in \cite{DelFelVit22}, which is, in turn, inspired by
\cite{AdoEsc97}. Afterwards, denoting by
$x=(x',x_n)\in \mathbb{R}^{n-1}\times\R$ the variable in $\R^n$, we
apply an even reflection through the hyperplane $\{x_n=0\}$ in
order to extend the equation to the exterior of the cylinder in
a neighbourhood of the origin.

First, we observe that, due to the regularity of $\partial \Omega$, up
to rigid motions, without loss of generality we may assume that there
exists a function $g\in C^{1,1}(\mathbb{R}^{n-1},\mathbb{R})$ such
that
\begin{align}
\label{eq:rigid-mot}    \Omega\cap B'_1&= \{(x',x_n)\in B'_1: x_n < g(x')\},\\
  \label{eq:rigid-mot2}  \partial \Omega \cap B'_1&=\{(x',x_n)\in B'_1: x_n = g(x')\},\\ 
\end{align}
and 
\begin{equation}\label{condizionisug}
    g(0)=0 \quad \text{and} \quad \nabla g(0)= 0. 
\end{equation}
We outline the main properties of the aforementioned diffeomorphism in
the following proposition, referring the reader to \cite[Section
2]{DelFelVit22} for the proofs and further details.
\begin{proposition}\label{p:diff-deform-boun}
  There exist $F=(F_1,\dots,F_n,F_{n+1}):\mathbb{R}^{n+1}\to \mathbb{R}^{n+1}$ of class
  $C^{1,1}$ and $r_0>0$ such that
 \begin{enumerate}[label=\rm  (\roman*), itemsep=1.5ex]
        \item $0\in F(B_{r_0})\subset B_1$ and $F|_{B_{r_0}}: B_{r_0}\to F(B_{r_0})$ is a
          diffeomorphism of class $C^{1,1}$,
        \item for
          every $j=1,\dots,n$, $F_j(x',x_n,t)=\Psi_j(x',x_n)$ for some $\Psi_j\in C^{1,1}(\R^n)$,
        \item $F(x',0,0)= (x',g(x'),0)$ for every $x'\in\R^{n-1}$,\\[2pt]
          $F_n(x',x_n,t)=x_n+g(x')$ for every
          $(x',x_n,t) \in \R^{n+1}$,\\[2pt]
          $F_{n+1}(x,t)=t$ for every
          $(x,t) \in \R^n\times \R$;
 \item $F(\mathcal Q_{r_0})=\mathcal C_\Omega\cap F(B_{r_0}^+)$,
   where $\mathcal Q_{r_0}:=\{(x',x_n,t)\in B_{r_0}^+:x_n<0\}$,\\[2pt]
   $F(\partial_V \mathcal Q_{r_0})=\partial_L\mathcal{C}_{\Omega}
   \cap F(B_{r_0}^+)$, where $\partial_V \mathcal Q_{r_0}=\{(x',x_n,t)\in B_{r_0}^+:x_n=0\}$;
 \item $F$ and $F^{-1}$ admit the following expansions:
\begin{equation}\label{sviluppoF}
F(z)=z+O(|z|^2)\quad \text{as $|z|\to 0^+$}, \quad 
    F^{-1}(y)=y+O(|y|^2)\quad \text{as $|y|\to 0^+$};
  \end{equation}
\item the Jacobian matrix of $F$, denoted by $J_F$, depends only on
  the variable $x=(x',x_n)\in\R^n$, takes the form
  \begin{equation*}
    J_F(x',x_n)=
      \left(
    \renewcommand{\arraystretch}{1.5}
    \begin{array}{c|c}
 J_{\Psi} (x',x_n) & \mathbf{0} \\
  \hline
  \mathbf{0}^T & 1
    \end{array}
  \right),
\end{equation*}
where $\Psi=(\Psi_1,\dots,\Psi_n)$, and satisfies
   \begin{align}
     &\label{det}
 \det J_F(x',x_n)= 1+ O(|x'|^2) +
       O(x_n)\quad\text{as }|x'|\to0\text{ and }x_n\to0,\\
     \label{zetahat}    & \det J_F(x)>0\quad\text{in }B'_{r_0},\\
\label{sviluppoF-2} &J_{F}(x)= \mathrm{Id}_{n+1}+O(|x|)
     \quad\text{and}\quad J_{F^{-1}}(x)= \mathrm{Id}_{n+1}+O(|x|)\quad \text{as $|x|\to 0^+$},     
   \end{align}
where $\mathop{\rm Id}_{{n+1}}$ denotes the identity
$(n+1)\times (n+1)$ matrix and $O(|x|)$ denotes  matrices with all
entries being $O(|x|)$ as $|x|\to 0^+$;
\item the $(n+1)\times (n+1)$ matrix-valued function  $\mathcal A$
  defined, for every $(x',x_n)\in B_{r_0}'$, as
\begin{equation*}
  \mathcal A(x',x_n):= \big(\det J_F(x',x_n)\big) (J_F(x',x_n))^{-1}
  ((J_F(x',x_n))^{-1})^T, 
\end{equation*}
has Lipschitz continuous entries and can be written as the block matrix 
\begin{equation}\label{Ablockmatrix1}
  \mathcal A(x',x_n)=
  \left(
    \renewcommand{\arraystretch}{1.5}
    \begin{array}{c|c}
 \mathcal D(x',x_n) & \mathbf{0} \\
  \hline
  \mathbf{0}^T & \det J_F(x',x_n)
    \end{array}
  \right),
\end{equation}
with 
\begin{equation}\label{D}
 \mathcal D(x',x_n)=
  \left(
    \renewcommand{\arraystretch}{1.5}
    \begin{array}{c|c}
      \mathrm{Id}_{n-1}+O(|x'|^2)+O(x_n) & O(x_n) \\
      \hline
      O(x_n) & 1+O(|x'|^2)+O(x_n)
    \end{array}
  \right),
\end{equation}
where $\mathrm{Id}_{n-1}$ is the identity $(n-1) \times (n-1)$
matrix, $O(x_n)$ and $O(|x'|^2)$ denote blocks of matrices with all
elements being $O(x_n)$ as $x_n\to 0$ and $O(|x'|^2)$ as
$|x'|\to 0$, respectively.

\end{enumerate}
\end{proposition}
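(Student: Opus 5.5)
The plan is to construct $F$ explicitly, following the construction of \cite[Section 2]{DelFelVit22} inspired by \cite{AdoEsc97}. Since the $t$-variable plays no role in straightening $\partial\Omega$, I set $F(x,t)=(\Psi(x),t)$, where $\Psi:\R^n\to\R^n$ is a deformation adapted to the graph $x_n=g(x')$. A naive choice like $(x',x_n)\mapsto(x',x_n+g(x'))$ would give (iii), but not the orthogonality structure in \eqref{D}. I would therefore move along the normal to the graph:
\[
\Psi(x',x_n)=\big(x'-x_n\nabla g(x'),\,x_n+g(x')\big)\cdot(\text{suitable normalization}).
\]
The statement requires $F_n(x',x_n,t)=x_n+g(x')$ exactly. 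So I would take $\Psi_j(x',x_n)=x_j-x_n\,\partial_jg(x')$ for $j\le n-1$ and $\Psi_n(x',x_n)=x_n+g(x')$.

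Since $\nabla g$ is only Lipschitz, $\Psi$ built this way would be merely Lipschitz. To recover $C^{1,1}$ regularity I would mollify $\nabla g$ at scale $|x_n|$, as in \cite{AdoEsc97}: replace $\partial_j g(x')$ by $G_j(x',x_n)=(\rho_{|x_n|}*\partial_j g)(x')$. These functions are $C^{0,1}$ in $(x',x_n)$, so $x_nG_j$ is $C^{1,1}$, with $\nabla_{x'}(x_nG_j)=O(x_n)$ and $\partial_{x_n}(x_nG_j)=\partial_jg(x')+O(x_n)$.

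With this choice, (ii) and (iii) hold by construction. Using $g(0)=0$ and $\nabla g(0)=0$ from \eqref{condizionisug}, I compute
\[
J_\Psi=\begin{pmatrix}\mathrm{Id}_{n-1}+O(x_n) & -\nabla g(x')+O(x_n)\\ \nabla g(x')^T & 1\end{pmatrix}.
\]
Since $\nabla g(x')=O(|x'|)$, this gives $J_\Psi=\mathrm{Id}+O(|x|)$, which is \eqref{sviluppoF-2}. Integrating along segments then gives \eqref{sviluppoF}. The inverse function theorem at $0$ yields (i) for small $r_0$, with $C^{1,1}$ regularity of the inverse following from that of $\Psi$.

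For (iv), note that $\Psi_n<g(\Psi')$ should correspond to $x_n<0$. This follows because $\Psi(x',0)=(x',g(x'))$ and $\Psi$ is a diffeomorphism mapping $\{x_n=0\}$ onto the graph. The sign of $\partial_{x_n}\Psi_n=1$ then fixes the side. The $t$-component is preserved, so $\mathcal Q_{r_0}$ and $\partial_V\mathcal Q_{r_0}$ map to the correct pieces.

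The block form of $J_F$ in (vi) is immediate from (ii) and (iii). For \eqref{det}, I expand
\[
\det J_\Psi=1+\nabla g\cdot\nabla g+O(x_n)=1+O(|x'|^2)+O(x_n).
\]
This also gives \eqref{zetahat} for small $r_0$.

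For (vii), $\mathcal A=\det J_F\,J_F^{-1}J_F^{-T}$ inherits the block structure \eqref{Ablockmatrix1} because the $t$-row and $t$-column of $J_F$ are trivial. Its entries are Lipschitz since $J_F$ is. The key point is \eqref{D}, and I expect this to be the main obstacle. For the off-diagonal block, at $x_n=0$ the columns of $J_\Psi$ are $(e_j,\partial_jg)$ for $j<n$ and $(-\nabla g,1)$ for the last one. These are mutually orthogonal: the last column is the normal to the graph and the others are tangent. Hence $J_\Psi^TJ_\Psi$ is block diagonal at $x_n=0$, and therefore so is its inverse $J_\Psi^{-1}J_\Psi^{-T}$. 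This yields the $O(x_n)$ off-diagonal blocks.

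For the diagonal blocks at $x_n=0$, $J^TJ$ has entries $\delta_{ij}+\partial_ig\partial_jg=\delta_{ij}+O(|x'|^2)$ and $1+|\nabla g|^2$. Multiplying by $\det=1+O(|x'|^2)$ gives the claimed form. Careful bookkeeping of the $O(x_n)$ errors coming from the mollification completes the argument.
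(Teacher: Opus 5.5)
Your map $\Psi(x',x_n)=(x'-x_nG(x',x_n),\,x_n+g(x'))$, with $G$ the gradient of $g$ mollified at scale $|x_n|$, is essentially the Adolfsson--Escauriaza-type deformation to which the paper defers the proof (\cite[Section 2]{DelFelVit22}), and deriving \eqref{D} from the orthogonality of the columns of $J_\Psi$ on $\{x_n=0\}$ is exactly the right mechanism, so the proposal is correct. The one step to tighten is regularity: Lipschitz continuity of $G_j$ alone does not give $x_nG_j\in C^{1,1}$; you need the scale-dependent bounds $|\nabla G_j|\le C$ and $|\nabla^2 G_j(x',x_n)|\le C|x_n|^{-1}$ for $x_n\neq0$ (these come from $\|\partial^k\rho_\varepsilon\|_{L^1}\le C\varepsilon^{-k}$, $\int\partial_\varepsilon^k\rho_\varepsilon=0$ and the Lipschitz bound on $\nabla g$), which make $\nabla^2(x_nG_j)$ bounded on each side of $\{x_n=0\}$, together with continuity of $\nabla(x_nG_j)$ across that hyperplane.
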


Let $v\in H^1(\mathcal{C}_{\Omega},t^{1-2s}dz)$ be  a weak solution of
problem \eqref{eqrisoltadaV}, so that the function $\tilde v$ defined
in \eqref{VinfunxdiW} weakly solves \eqref{eqrisoltadaW}. Let us
consider the diffeomorphism introduced in Proposition
\ref{p:diff-deform-boun} and the function
\begin{equation*}
\hat{v}:= \tilde{v} \circ F\in H^1(\mathcal Q_{r_0}, t^{1-2s}dz).
\end{equation*}
From \eqref{eqrisoltadaW} it follows that $\hat v$ is a weak solution to 
\begin{equation}\label{problemaraddrizzato}
\begin{cases}
- \mathop{\rm div}(t^{1-2s} \mathcal  A \nabla \hat{v}) +
t^{1-2s}\hat{\mathbf{b}}(x)\cdot
\nabla _x \hat{v} +t^{1-2s} \hat{a}(x) \hat{v}=0
&\text{in $\mathcal{Q}_{r_0}$} , \\
\mathcal D\nabla _x \hat{v}\cdot\nu=0  &
\text{on $\partial_V \mathcal Q_{r_0}$} , \\
\lim_{t \to 0^+} (t^{1-2s}\mathcal A\nabla\hat{v}\cdot\mathbf{e}) =
\hat{h}(x)
\mathop{\rm Tr}\hat{v}
& \text{on $\mathcal Q_{r_0}'$}, 
\end{cases}
\end{equation}
 where $\mathbf{e}=(0,\cdots,0,-1)$,
$\mathcal Q_{r_0}':=\{(x',x_n)\in B_{r_0}':x_n<0\}$,
\begin{equation}\label{bhat}
  \hat{\mathbf{b}}(x) = \big(\det
  J_F(x))(J_{\Psi}(x))^{-1}\mathbf{b}(\Psi(x)), \quad
  \hat{a}(x)= \big(\det J_F(x)) a(\Psi(x)), 
\end{equation}
and 
\begin{equation}\label{hhat}
  \hat{h}(x)=\big(\det J_F(x)) h(\Psi(x))\in W^{1,p}(\mathcal Q_{r_0}'), 
\end{equation}
as a consequence of \eqref{ipotesisuh}.

Now we perform an even reflection through the hyperplane
$\{x_n=0\}$, in order to extend the equation in the whole half ball
$B^+_{r_0}$.  Let
\begin{equation}\label{Wwidehat}
 w(x',x_n,t):=
\begin{cases}
\hat{v}(x',x_n,t) &\text{if } (x',x_n,t)\in \mathcal Q_{r_0},\\
\hat{v}(x',-x_n,t) &\text{if } (x',x_n,t)\in B^+_{r_0}\setminus \mathcal Q_{r_0},
\end{cases} 
\end{equation}
and
\begin{equation*}
  B:=
    \renewcommand{\arraystretch}{1.5}
  \left(\begin{array}{c|c}B' &\mathbf{0} \\
          \hline 
          \mathbf{0}^T&1 \end{array}\right),
          \quad\text{where}\quad  
  B':=
  \left(\begin{array}{c|c}\mathop{\rm Id}_{n-1} &\mathbf{0} \\
          \hline \mathbf{0}^T&-1 \end{array}\right).
    \end{equation*}
    Thus $w\in H^1(B^+_{r_0}, t^{1-2s}dz)$ is a weak solution to 
\begin{equation}\label{problemadiWtilde}
\begin{cases}
- \mathop{\rm div}(t^{1-2s} A \nabla w) +
t^{1-2s}\widetilde{\mathbf{b}}\cdot
\nabla _x w + t^{1-2s}\tilde{a}(x) w=0 &\text{in $B^+_{r_0}$} , \\
-\lim_{t \to 0^+} t^{1-2s}\zeta\partial_t w = \tilde{h} \mathop{\rm Tr}w
& \text{on $B'_{r_0}$} ,
\end{cases}
\end{equation}
where 
\begin{align}\label{Atilde}
 A(x',x_n)&=
\begin{cases}
\mathcal A(x',x_n) &\text{if } x_n>0,\\
B \mathcal A(x',-x_n)B &\text{if } x_n<0,
\end{cases}\\ 
\notag\widetilde {\mathbf{b}}(x',x_n)&=
\begin{cases}
\hat {\mathbf{b}}(x',x_n) &\text{if } x_n>0,\\
\hat {\mathbf{b}}(x',-x_n)B' &\text{if } x_n<0,
\end{cases}\\
  \notag
 \tilde a(x',x_n)&=
\begin{cases}
\hat a(x',x_n) &\text{if }  x_n>0,\\
\hat a(x',-x_n) &\text{in } x_n<0,
\end{cases}
\end{align}
and
\begin{equation}\label{zetatilde}
 \zeta(x',x_n)=
\begin{cases}
\det J_F(x',x_n) &\text{if }  x_n>0,\\
\det J_F(x',-x_n) &\text{if }  x_n<0.
\end{cases} 
\end{equation}
Moreover
\begin{equation*}
\tilde h(x',x_n)=
\begin{cases}
\hat h(x',x_n) &\text{if }  x_n>0,\\
\hat h(x',-x_n) &\text{if }  x_n<0,
\end{cases} 
\end{equation*}
so that $\tilde{h}\in W^{1,p}(B'_{r_0})$, thanks to \eqref{hhat}.
We note that the boundary condition on $B_{r_0}'$ in
  \eqref{problemadiWtilde} is in fact a weighted Neumann condition,
  since
  \begin{equation*}
    A\nabla w\cdot \mathbf{e}=   \nabla w\cdot
    A(0,\dots,0,-1)=-\zeta\partial_t w\quad \text{on }B_{r_0}'.
  \end{equation*}
By
\eqref{regularitybanda} and \eqref{bhat}, recalling that
$F|_{B_{r_0}}: B_{r_0}\to F(B_{r_0})$ is a diffeomorphism of class
$C^{1,1}$, we thus have that
\begin{equation}\label{nuovaregolarita}
\widetilde{\mathbf{b}}\in L^\infty(B_{r_0}')\quad\text{and}\quad 
    \tilde a \in L^\infty(B_{r_0}').
\end{equation}
Furthermore \eqref{zetahat} and \eqref{zetatilde} imply that
\begin{equation}\label{stimadizeta}
  \zeta(x',x_n)= 1+O(|x'|^2)+O(x_n) \quad
  \text{as $|x'|\to 0$ and $x_n\to 0$}. 
\end{equation}
Also, defining 
\begin{equation}\label{Dtilde}
 D(x',x_n):=
\begin{cases}
\mathcal D(x',x_n) &\text{if } x_n>0,\\
B' \mathcal D(x',-x_n)B' &\text{if } x_n<0,
\end{cases} 
\end{equation}
from \eqref{Ablockmatrix1}, \eqref{Atilde} and \eqref{zetatilde}, it
follows that
\begin{equation}\label{Ablockmatrix}
 A=
  \left(
    \renewcommand{\arraystretch}{1.5}
    \begin{array}{c|c}
  D & \mathbf{0} \\
  \hline
  \mathbf{0}^T & \zeta
    \end{array}
  \right).
\end{equation} 
In particular, 
\begin{equation}\label{Lip}
    \text{$A$ has Lipschitz coefficients}
\end{equation}
because $\mathcal A$ does so and 
  \begin{equation*}
    B' \mathcal D(x',0)B'=\mathcal D(x',0)
  \end{equation*}
  thanks to \eqref{D}, so that no discontinuities appear in the matrix
  as a result of the even reflection across the axis
  $x_n=0$. Thanks to \eqref{det}, \eqref{Ablockmatrix1} 
\eqref{D}, \eqref{zetatilde}, \eqref{Dtilde}, and \eqref{Ablockmatrix},
we can conclude that
\begin{equation}\label{Aperturbazione}
  A (x)- \mathrm{Id}_{n+1} =O(|x|) \quad \text{as $|x|\to 0$}.
\end{equation}
Accordingly, $A$ is uniformly elliptic if $r_0$ is chosen sufficiently
small from the beginning; provided $r_0$ is chosen sufficiently
  small, there is no loss of generality in assuming, for example,
  that for every $z\in \R^{n+1}$ and $x\in B'_{r_0}$
\begin{equation}\label{normadiAlimitata}
\frac{1}{2}|z|^2\leq A(x)z\cdot z\leq 2|z|^2.
\end{equation}
For every $x\in B_{r_0}'$ and $z\in\mathbb{R}^{n+1}$, we
define $dA(x)zz\in\mathbb{R}^{n+1}$  as
\begin{equation}\label{dCtilde}
  dA (x) zz:= \left(\sum_{j,k=1}^{n+1}\frac{\partial a_{jk}}{\partial
      x_1}z_j z_k,
    \dots,\sum_{j,k=1}^{n+1}\frac{\partial a_{jk}}{\partial x_{n}}
    z_j z_k,0\right),
\end{equation} 
with $a_{jk}$ denoting the entries of $A$.  Furthermore, we
consider the functions
\begin{equation}\label{mu}
\mu(z)=\mu(x,t):= \begin{cases}
(A(x) z\cdot z)/|z|^2 &\text{if $z\in B_{r_0}\setminus\{0\}$},\\
1 &\text{if $z=0$},
\end{cases}
\end{equation}
\begin{equation}\label{beta}
  \beta(z)=\beta(x,t):=\frac{A(x)z}{\mu(z)}=\left(\frac{D(x)x}{\mu(z)},
    \frac{\zeta(x) t}{\mu(z)}\right)\quad \text{for every $z\in B_{r_0}$},
\end{equation}
\eqref{Ablockmatrix} has been used, and 
\begin{equation*}
    \beta'(x):=\frac{D(x)x}{\mu(x,0)}\quad \text{for every $x\in B'_{r_0}$}.
\end{equation*}
\begin{proposition}
The following expansions hold
\begin{equation}\label{sviluppomu}
  \mu(z)=1+O(|z|) \quad\text{and}\quad \nabla \mu (z)= O(1) \quad \text{as $|z|\to 0$}.
\end{equation}
  Furthermore,  
\begin{equation}\label{1sumulimitata}
\mu(z)\geq \frac{1}{4}\quad \text{for every $z\in B_{r_0}$},
\end{equation}
\begin{equation}\label{lestimedibeta}
\begin{cases}
\beta(z)= z+O(|z|^2)=O(|z|)\quad \text{as $|z|\to 0$},&\\
J_\beta(z)= \mathrm{Id}_{n+1}+O(|z|)\quad \text{as $|z|\to 0$},&\\
\mathop{\rm div}\beta(z)=n+1+O(|z|)\quad \text{as $|z|\to 0$},&
\end{cases} 
\end{equation}
and 
\begin{equation}\label{lestimedibeta'}
    \begin{cases}
\beta'(x)= x+O(|x|^2)=O(|x|)\quad \text{as $|x|\to 0$},&\\
\mathop{\rm div}\beta'(x)=n+O(|x|)\quad \text{as $|x|\to 0$}.&
\end{cases} 
\end{equation}
\end{proposition}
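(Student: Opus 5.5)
The plan is to derive every expansion from two facts already in hand: the structure \eqref{Ablockmatrix} of $A$, and the properties \eqref{Lip}, \eqref{Aperturbazione}, \eqref{normadiAlimitata}. Lipschitz continuity gives $A\in W^{1,\infty}(B'_{r_0})$, so $\nabla_x a_{jk}$ is bounded almost everywhere; this suffices because every statement is either an $O(\cdot)$ bound or an a.e. identity.

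\emph{Bounds on $\mu$.} For $z\ne0$ write $\mu(z)=1+(A(x)-\mathrm{Id}_{n+1})\frac{z}{|z|}\cdot\frac{z}{|z|}$. Since $|x|\le|z|$, \eqref{Aperturbazione} gives $\mu(z)=1+O(|z|)$. The lower bound \eqref{1sumulimitata} follows from \eqref{normadiAlimitata}, which gives $\mu\ge\frac12\ge\frac14$ for $z\neq0$, together with $\mu(0)=1$.

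\emph{Gradient of $\mu$.} Differentiate the quotient:
\[
\nabla\mu(z)=\frac{dA(x)zz+2A(x)z}{|z|^2}-\frac{2(A(x)z\cdot z)\,z}{|z|^4}.
\]
The first term is $O(1)$ because $\nabla a_{jk}\in L^\infty$. The other two combine into $\frac{2}{|z|^2}\big[(A-\mathrm{Id}_{n+1})z-\big((A-\mathrm{Id}_{n+1})\frac{z}{|z|}\cdot\frac{z}{|z|}\big)z\big]$, and each bracketed piece is $O(|z|^2)$ by \eqref{Aperturbazione}. Even without that cancellation, each term is of size $|A|\,|z|/|z|^2\sim1/|z|$ taken separately, and the cancellation brings the sum down to $O(1)$. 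Hence $\nabla\mu=O(1)$.

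\emph{Expansions of $\beta$.} Write $\beta(z)=\mu^{-1}A(x)z$. Then $A(x)z=z+O(|z|^2)$ by \eqref{Aperturbazione}, and $\mu^{-1}=1+O(|z|)$ by the first step and \eqref{1sumulimitata}; multiplying gives $\beta(z)=z+O(|z|^2)$. For the Jacobian,
\[
J_\beta=\mu^{-1}\Big(A+\big(\partial_{x_i}A\,z\big)_{i}\Big)-\mu^{-2}\,(A z)\otimes\nabla\mu .
\]
The first bracket is $\mathrm{Id}_{n+1}+O(|z|)$, because $A=\mathrm{Id}_{n+1}+O(|z|)$ and $\partial_{x_i}A\,z=O(|z|)$. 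The correction term is $O(|z|)\cdot O(1)$ since $Az=O(|z|)$. Multiplying by $\mu^{-1}=1+O(|z|)$ gives $J_\beta=\mathrm{Id}_{n+1}+O(|z|)$. Taking the trace yields $\mathop{\rm div}\beta=n+1+O(|z|)$.

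\emph{Expansions of $\beta'$.} Here $\beta'(x)=D(x)x/\mu(x,0)$. Because $A$ is block diagonal, $D(x)-\mathrm{Id}_n=O(|x|)$ by \eqref{Aperturbazione}, and $\mu(x,0)=1+O(|x|)$ with bounded gradient by the $\mu$ step restricted to $t=0$. The same computation in dimension $n$ gives $\beta'(x)=x+O(|x|^2)$ and $J_{\beta'}=\mathrm{Id}_n+O(|x|)$, hence $\mathop{\rm div}\beta'=n+O(|x|)$.

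The main obstacle is the $\nabla\mu$ estimate, because the singular terms must cancel and the derivative of $A$ exists only a.e. I handle this by keeping the terms in the combination above and noting that all statements are understood a.e. / in $L^\infty$. Since $\beta$ is only Lipschitz, its divergence is likewise to be read almost everywhere.
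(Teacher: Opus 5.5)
Your proof is correct. That includes the one delicate point: the $O(|z|^{-1})$ terms in $\nabla\mu$ cancel once $A$ is written as $\mathrm{Id}_{n+1}+O(|x|)$, which leaves $\frac{2}{|z|^2}\big[(A-\mathrm{Id}_{n+1})z-\big((A-\mathrm{Id}_{n+1})\frac{z}{|z|}\cdot\frac{z}{|z|}\big)z\big]=O(1)$.

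The paper gives no argument of its own here and only refers to \cite[Section 2]{DelFelVit22}. Your direct computation from \eqref{Lip}, \eqref{Aperturbazione} and \eqref{normadiAlimitata} is a self-contained substitute for that citation.
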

\begin{proof} 
  See  \cite[Section 2]{DelFelVit22}.
\end{proof}

\subsection{A Pohozaev-type identity}
We derive a Pohozaev-type identity, representing the integral
  counterpart of a Rellich-Ne\v{c}as identity; this will be a key tool
  for establishing a lower bound on the derivative of the Almgren
frequency function in the next subsection.
\begin{proposition}[Pohozaev-type identity]\label{Ppoho}
  Let $w\in H^1(B^+_{r_0}, t^{1-2s}dz)$ be a weak solution to problem
  \eqref{problemadiWtilde}. Then, for a.e. $r\in (0,r_0)$ we have
\begin{align}\label{poho}
&r\int_{\partial^+ B^+_{r}} t^{1-2s} A \nabla w\cdot\nabla w \,dS-
                            2r\int_{\partial^+ B^+_{r}}
                            t^{1-2s}\frac{(A\nabla w \cdot\nu
                            )^2}{\mu} \,dS
                            - r\int_{S_r}\tilde{h}|\mathop{\rm Tr}w|^2\, dS'\\
  \notag
  &+ \int_{B'_r} |\mathop{\rm Tr}w |^2\,(\nabla
    _x\tilde{h}\cdot\beta'
    +\tilde{h}\,\mathop{\rm div}\nolimits_x\beta' )\,dx = \int_{B^+_{r}}
    (t^{1-2s}A \nabla w\cdot\nabla w)
    \mathop{\rm div}\beta\,dz\\
\notag &-  2\int_{B^+_{r}}t^{1-2s}(\beta\cdot\nabla w)
         (\widetilde{\mathbf{b}}\cdot\nabla _x w
         +\tilde{a} w)\,dz-  2\int_{B^+_{r}}t^{1-2s}J_\beta (A\nabla w)  \cdot\nabla w\,dz\\
\notag &+\int_{B^+_{r}}t^{1-2s} (dA\nabla  w \nabla w) \cdot\beta\,dz
         +
         (1-2s)\int_{B^+_{r}}t^{1-2s}\frac{\zeta}{\mu}A\nabla  w\cdot\nabla w\,dz,
\end{align}
where $dS$ and $dS'$ denote the volume elements on the $N$-dimensional
surface $\partial^+ B^+_{r}$ and the $(N-1)$-dimensional surface
$S_r$, respectively, and $\nu$ is the outer normal vector to
  $\partial^+ B^+_{r}$. Moreover, for every $r\in (0,r_0)$,
\begin{align}\label{identity}
    \int_{B^+_r} t^{1-2s} \big(A\nabla  w\cdot\nabla  w&+
                                                     (\widetilde{\mathbf{b}}\cdot
                                                     \nabla_x  w)w + \tilde{a} w^2\big)\,dz\\
\notag    &=\int_{B'_r} \tilde{h} |\mathop{\rm Tr}w|^2\,
            dx
            +\int_{\partial^+ B^+_r} t^{1-2s}(A\nabla  w\cdot\nu) w\,dS.
\end{align}
\end{proposition}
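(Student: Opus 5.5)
The plan is to obtain \eqref{poho} by testing \eqref{problemadiWtilde} with the Rellich--Ne\v{c}as multiplier $\beta\cdot\nabla w$, and \eqref{identity} by testing it with $w$ itself. Since $w$ is only in $H^1(B^+_{r_0},t^{1-2s}dz)$, neither test function is admissible a priori. I will therefore first prove the identities for a regularized problem and then pass to the limit, following the scheme of \cite{FelSic22} (and \cite[Section 2]{DelFelVit22}), whose hypotheses the reflected problem \eqref{problemadiWtilde} satisfies. Concretely, \eqref{Lip}, \eqref{normadiAlimitata} and the block structure \eqref{Ablockmatrix} give a Lipschitz, uniformly elliptic matrix with no mixed $x$--$t$ entries. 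In addition, $\widetilde{\mathbf b},\tilde a\in L^\infty$, $\tilde h\in W^{1,p}(B'_{r_0})$ with $p>\frac n{2s}$, and $\zeta$ is Lipschitz.

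\textbf{Step 1: regularity.} I will show that $\nabla_x w$ and $t^{1-2s}\partial_t w$ have enough integrability up to $\{t=0\}$. The relevant facts are $w\in H^2_{\rm loc}$ away from $t=0$ in the $x$-directions, and $t^{1-2s}\partial_t w$ continuous up to $t=0$ with boundary value $-\tilde h\,\mathrm{Tr}w/\zeta$. To get them, I will approximate by problems in $B^+_r\cap\{t>\varepsilon\}$, or alternatively use difference quotients in $x$. The block structure lets $x$-tangential difference quotients commute with the weighted Neumann condition. The condition $p>n/(2s)$ gives $\tilde h\,\mathrm{Tr}w\in L^q$ with $q$ large enough, through the fractional Sobolev embedding $H^s\hookrightarrow L^{2n/(n-2s)}$ and the Hölder bounds in \cite{JLX,SirTerVit21a}. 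With these estimates, for a.e.\ $r$ the boundary traces of $t^{1-2s}|\nabla w|^2$ on $\partial^+B^+_r$ and of $|\mathrm{Tr}w|^2$ on $S_r$ are integrable. This holds by coarea, which explains the ``a.e.\ $r$'' in the statement.

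\textbf{Step 2: the Rellich identity.} I will compute the pointwise identity
\[
\mathop{\rm div}\Big(t^{1-2s}(A\nabla w\cdot\nabla w)\beta-2t^{1-2s}(\beta\cdot\nabla w)A\nabla w\Big)
\]
and expand it. First, $\mathop{\rm div}(t^{1-2s}\beta)=t^{1-2s}\mathop{\rm div}\beta+(1-2s)t^{-2s}\beta_{n+1}$, and $\beta_{n+1}=\zeta t/\mu$, which produces the $(1-2s)\zeta/\mu$ term. The derivative of $A$ along $\beta$ produces the $dA\nabla w\nabla w\cdot\beta$ term, and the derivative of $\beta$ produces the $J_\beta$ term. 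The equation replaces $\mathop{\rm div}(t^{1-2s}A\nabla w)$ by the lower-order terms $t^{1-2s}(\widetilde{\mathbf b}\cdot\nabla_xw+\tilde aw)$. I then integrate over $B^+_r\cap\{t>\varepsilon\}$ and treat the two boundary pieces separately. On $\partial^+B^+_r$ we have $\beta=A z/\mu=rA\nu/\mu$, so $\beta\cdot\nabla w=r(A\nabla w\cdot\nu)/\mu$, which yields the first two surface terms. On $\{t=\varepsilon\}$, $\beta$ is tangential up to $O(\varepsilon)$ by \eqref{beta}. As $\varepsilon\to0$, the first flux vanishes thanks to the factor $t^{1-2s}\cdot t$. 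The second flux tends to $-2\int_{B'_r}\tilde h\,\mathrm{Tr}w\,(\beta'\cdot\nabla_x\mathrm{Tr}w)\,dx$. Writing this as $-\int\tilde h\,\beta'\cdot\nabla_x(\mathrm{Tr}w)^2$ and integrating by parts on $B'_r$, using $\beta'=x$ on $S_r$ up to the normalization $\beta'\cdot\nu'=r$, gives $-r\int_{S_r}\tilde h|\mathrm{Tr}w|^2$ together with the term $\int_{B'_r}|\mathrm{Tr}w|^2(\nabla\tilde h\cdot\beta'+\tilde h\mathop{\rm div}\beta')$. Here one must check that $\beta'\cdot\nu'=D(x)x\cdot x/(|x|\mu(x,0))=|x|=r$, which holds by the definition of $\mu$.

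\textbf{Step 3: the energy identity and the main obstacle.} Identity \eqref{identity} is simpler. I test the weak formulation with $w\eta_k$, where $\eta_k$ are radial cutoffs converging to $\mathbf 1_{B_r}$, and pass to the limit. This is valid for every $r$ because the flux $\int_{\partial^+B^+_r}t^{1-2s}(A\nabla w\cdot\nu)w$ is defined via the equation as a distributional derivative in $r$ and coincides with an absolutely continuous function. The main obstacle is Step 1 together with the $\varepsilon\to0$ limit in Step 2. Specifically, one needs integrability of $t^{1-2s}|\nabla w|^2$ traces and convergence of $t^{1-2s}\partial_tw\,\nabla_xw$ at $t=\varepsilon$. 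I would handle this by approximating $\tilde h\,\mathrm{Tr}w$ with smooth data, solving approximating problems with smooth coefficients for which the identity is classical, and passing to the limit using uniform weighted $H^1$ bounds and the strong $L^2$ convergence of $t^{1-2s}|\nabla w_k|^2$ on a.e.\ sphere along a subsequence, exactly as in \cite[Theorem 3.7]{FelSic22}.
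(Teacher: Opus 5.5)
Your proposal is correct and follows essentially the paper's route. The paper obtains \eqref{poho} by invoking \cite[Proposition 2.3]{FelSic22}, treating the lower-order terms as a right-hand side $c=\widetilde{\mathbf{b}}\cdot\nabla_x w+\tilde a w\in L^2(B^+_{r_0},t^{1-2s}dz)$, and obtains \eqref{identity} from \cite[Proposition 3.7]{FelSic22} with test function $w$; your Rellich--Ne\v{c}as computation with multiplier $\beta\cdot\nabla w$ (with the correct boundary bookkeeping $\beta\cdot\nu=r$ and $\beta\cdot\nabla w=r(A\nabla w\cdot\nu)/\mu$ on $\partial^+B^+_r$, and $\beta'\cdot\nu'=r$ on $S_r$) is exactly what those results contain. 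One adjustment to Step 1: you do not need continuity of $t^{1-2s}\partial_t w$ up to $t=0$, and it can fail in general (for instance, when $n\ge 2$ and $p\le n$, $\tilde h\in W^{1,p}$ may be discontinuous). The $\varepsilon\to0^+$ limit, and the meaning of the flux in \eqref{identity} for every $r$, only use the weighted regularity $\nabla_x w\in H^1(B^+_r,t^{1-2s}dz)$ and $t^{1-2s}\partial_t w\in H^1(B^+_r,t^{2s-1}dz)$ of \cite[Theorem 2.1]{FelSic22}, which gives $L^2$ convergence of the slices at $t=\varepsilon$.
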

\begin{proof}
The identity \eqref{poho} follows directly from \cite[Proposition 2.3]{FelSic22} with 
\begin{equation*}
  c= \widetilde{\mathbf{b}}\cdot\nabla _x  w+\tilde{a} w\in L^2(B^+_{r_0}, t^{1-2s}dz),
\end{equation*}
taking into account \eqref{nuovaregolarita}.  As for \eqref{identity},
it suffices to apply \cite[Proposition 3.7]{FelSic22} by choosing the
solution $ w\in H^1(B^+_{r_0}, t^{1-2s}dz)$ as the test function
$\phi$.  The proof is thereby complete.
\end{proof}

\begin{remark} The integrals on the first line of \eqref{poho}
    can be understood for a.e. $r\in (0,r_0)$ as $L^1$-functions given
    by the weak derivatives of the $W^{1,1}(0,r_0)$-functions
    $r\mapsto \int_{B_r^+} t^{1-2s} A \nabla w\cdot\nabla w \,dz$,
    $r\mapsto \int_{B^+_{r}} t^{1-2s}\frac{(A\nabla w \cdot\nu
      )^2}{\mu} \,dz$, and
    $r\mapsto \int_{B_r'}\tilde{h}|\mathop{\rm Tr}w|^2\, dx$.
  \end{remark}

We conclude this subsection with an estimate of the term
\begin{equation*}
  \int  _{\partial^+B^+_r}t^{1-2s}A\nabla  w\cdot\nabla w\,dS -
  \int_{S_r}\tilde{h}(x)|\mathop{\rm Tr} w|^2\,dS'
\end{equation*}
by means of \eqref{poho}. For this, we recall from \cite[Lemma
2.6]{FalFel14} the following result.
\begin{lemma}\label{lemmafallfelli}
  There exists $S_{n,s}>0$ such that, for all
  $r>0$ and $w\in H^1(B^+_r, t^{1-2s}dz)$,
    \begin{equation}\label{eq:trem}
\left(\int_{B'_r}|\mathop{\rm
    Tr}w|^{2^\ast_s}dx\right)^{\frac{2}{2^\ast_s}}
\leq S_{n,s}\left(\frac{n-2s}{2r}\int_{\partial^+B^+_r}t^{1-2s}w^2\,dS
  +\int_{B^+_r} t^{1-2s}|\nabla w|^2\,dz\right),
    \end{equation}
    being $2^\ast_s:= 2n/(n-2s)$.
\end{lemma}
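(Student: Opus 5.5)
Since Lemma \ref{lemmafallfelli} is quoted from \cite[Lemma 2.6]{FalFel14}, the plan is to reduce it to two standard facts: scaling invariance and the trace inequality on the whole half-space. First I would use scaling to take $r=1$. For $w\in H^1(B_r^+,t^{1-2s}dz)$ put $w_r(z):=w(rz)$. A change of variables shows that the left-hand side of \eqref{eq:trem} scales like $r^{-(n-2s)}$ times the one for $w_r$. Both terms on the right scale in the same way:
\begin{equation*}
\int_{B_r^+}t^{1-2s}|\nabla w|^2\,dz=r^{n-2s}\int_{B_1^+}t^{1-2s}|\nabla w_r|^2\,dz,\qquad
\frac1r\int_{\partial^+B_r^+}t^{1-2s}w^2\,dS=r^{n-2s}\int_{\partial^+B_1^+}t^{1-2s}w_r^2\,dS.
\end{equation*}
The exponent $2/2^*_s$ is exactly what matches these powers. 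So it is enough to prove the inequality for $r=1$ with a constant independent of $w$.

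For $r=1$ I would extend $w$ from $B_1^+$ to the whole half-space $\R^{n+1}_+$ by the Kelvin-type inversion $z\mapsto z/|z|^2$. Concretely, for $|z|>1$ set
\begin{equation*}
\tilde w(z):=|z|^{-(n-2s)}\,w\big(z/|z|^2\big).
\end{equation*}
This map preserves $\{t>0\}$, and the weight $t^{1-2s}$ transforms compatibly with the factor $|z|^{-(n-2s)}$, since $n-2s=(n+1)+(1-2s)-2$. The extended $\tilde w$ is continuous across $\partial^+B_1^+$. I would then expand $|\nabla \tilde w|^2$ and integrate the cross term by parts. The outcome should be the identity
\begin{equation*}
\int_{\R^{n+1}_+\setminus B_1^+}t^{1-2s}|\nabla\tilde w|^2\,dz=\int_{B_1^+}t^{1-2s}|\nabla w|^2\,dz+(n-2s)\int_{\partial^+B_1^+}t^{1-2s}w^2\,dS.
\end{equation*}
Equivalently, one can avoid the exact identity and simply bound the exterior energy by the right-hand side of \eqref{eq:trem}, which suffices.

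Next I would apply the sharp weighted trace inequality on the half-space, $\|\mathop{\rm Tr}U\|_{L^{2^*_s}(\R^n)}^2\le S\int_{\R^{n+1}_+}t^{1-2s}|\nabla U|^2\,dz$ (the trace form of the fractional Sobolev inequality, via the Caffarelli--Silvestre extension), with $U=\tilde w$. The total energy of $\tilde w$ is twice the interior energy plus $(n-2s)\int_{\partial^+B_1^+}t^{1-2s}w^2\,dS$. This equals $2$ times the bracket in \eqref{eq:trem} with $r=1$. Since the left-hand side only increases when we pass from $B_1'$ to all of $\R^n$, the lemma follows with $S_{n,s}=2S$.

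The main obstacle is the exact computation of the energy of the inverted function. Specifically, one must check that the boundary term produced by the cross term has coefficient precisely $(n-2s)$ with the correct sign, so that it can be absorbed into the boundary term of \eqref{eq:trem}. I would first carry out the computation for smooth $w$ in polar coordinates, writing $\tilde w(\rho,\theta)=\rho^{-(n-2s)}w(1/\rho,\theta)$. The general case then follows by density of $C^\infty(\overline{B_1^+})$ in the weighted space. If the constant does not match exactly, a cruder estimate of the form $C\big(\int_{B_1^+}t^{1-2s}|\nabla w|^2\,dz+\int_{\partial^+B_1^+}t^{1-2s}w^2\,dS\big)$ is still enough for the lemma, since $S_{n,s}$ is not claimed to be sharp.
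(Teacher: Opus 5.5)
Your argument is correct and is the standard Kelvin-inversion proof of the inequality, which the paper does not prove itself but simply quotes from \cite[Lemma 2.6]{FalFel14}. The steps are scaling to $r=1$, the extension $|z|^{-(n-2s)}w(z/|z|^2)$, and the half-space inequality \eqref{Sobolev-trace}. Your polar-coordinate computation does close with the plus sign: the exterior energy is exactly $\int_{B_1^+}t^{1-2s}|\nabla w|^2\,dz+(n-2s)\int_{\partial^+B_1^+}t^{1-2s}w^2\,dS$. Hence the total energy is twice the bracket and $S_{n,s}=2/c_{n,s}$ works.

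Two cosmetic fixes are needed. First, the left-hand side of \eqref{eq:trem} for $w$ equals $r^{n-2s}$ (not $r^{-(n-2s)}$) times the one for $w_r$. Second, $\tilde w$ only decays like $|z|^{-(n-2s)}$, so you should apply \eqref{Sobolev-trace} to $\eta_R\tilde w$ with a cutoff at scale $R$. The resulting error $\int t^{1-2s}\tilde w^2|\nabla\eta_R|^2\,dz=O(R^{-(n-2s)})$ vanishes as $R\to\infty$ because $n>2s$.
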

The right hand side of \eqref{eq:trem} is well defined for every
  $r>0$ and $w\in H^1(B^+_r, t^{1-2s}dz)$, thanks to the existence of a
  natural linear and continuous trace operator
        \begin{equation}\label{traceoper}
          \gamma_r: H^1(B^+_r, t^{1-2s}dz) \to
          L^2(\partial^+B^+_r,t^{1-2s}dS),
        \end{equation}
        which also happens to be compact.  For
        notational simplicity, in the sequel we will write
        $w$ instead of $\gamma_r(w)$.

\begin{proposition}\label{proputile}
  Let $w\in H^1(B^+_{r_0}, t^{1-2s}dz)$ be a weak solution to problem
  \eqref{problemadiWtilde}. Then
    \begin{align}\label{useful}
      \int  _{\partial^+B^+_r}&t^{1-2s}A\nabla  w\cdot\nabla w\,dS -
                                \int_{S_r}\tilde{h}|\mathop{\rm Tr}
                                w|^2\,dS'
                                =\,2\int_{\partial^+B^+_r}t^{1-2s}
                                \frac{(A\nabla  w\cdot\nu)^2}{\mu}\,dS \\
      \notag
                              &-\frac{2}{r}\int_{B^+_{r}}t^{1-2s}(\beta\cdot\nabla  w)
                                (\widetilde{\mathbf{b}}\cdot\nabla _x
                                w+\tilde{a} w)\,dz
                                + \frac{n-2s}{r}\int_{B^+_{r}}t^{1-2s}
                                A\nabla  w\cdot\nabla  w\,dz \\
      \notag    &+O(r^{-1+\frac{2sp-n}{p}})\left(\int_{B'_r}|
                  \mathop{\rm Tr}
                  w|^{2^\ast_s}\,dx\right)^{\frac{2}{2^\ast_s}}
                  + O(1)\int_{B^+_r}t^{1-2s}|\nabla  w|^2\,dz
    \end{align}
    for a.e. $r\in (0,r_0)$ as $r\to 0^+$.
\end{proposition}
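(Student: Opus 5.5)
The plan is to start from the Pohozaev identity \eqref{poho}, divide by $r$, and isolate the left-hand side of \eqref{useful}. This gives
\begin{align*}
\int_{\partial^+B^+_r}t^{1-2s}A\nabla w\cdot\nabla w\,dS-\int_{S_r}\tilde h|\mathop{\rm Tr}w|^2\,dS'
={}&2\int_{\partial^+B^+_r}t^{1-2s}\frac{(A\nabla w\cdot\nu)^2}{\mu}\,dS\\
&-\frac{2}{r}\int_{B^+_r}t^{1-2s}(\beta\cdot\nabla w)(\widetilde{\mathbf b}\cdot\nabla_x w+\tilde a w)\,dz\\
&+\frac1r\big(I_1-2I_2+I_3+I_4\big)-\frac1r\,I_5,
\end{align*}
where:
\begin{itemize}
\item $I_1=\int_{B_r^+} t^{1-2s}(A\nabla w\cdot\nabla w)\mathop{\rm div}\beta\,dz$;
\item $I_2=\int_{B_r^+} t^{1-2s}J_\beta(A\nabla w)\cdot\nabla w\,dz$;
\item $I_3=\int_{B_r^+} t^{1-2s}(dA\nabla w\nabla w)\cdot\beta\,dz$;
\item $I_4=(1-2s)\int_{B_r^+} t^{1-2s}\frac{\zeta}{\mu}A\nabla w\cdot\nabla w\,dz$;
\item $I_5=\int_{B_r'}|\mathop{\rm Tr}w|^2(\nabla_x\tilde h\cdot\beta'+\tilde h\mathop{\rm div}_x\beta')\,dx$.
\end{itemize}
The two terms with $\widetilde{\mathbf b},\tilde a$ and the boundary flux term are kept exactly as in \eqref{useful}. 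It then remains to expand the bulk terms $I_1,\dots,I_4$ and to estimate the base term $I_5$.

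For the bulk terms I use \eqref{lestimedibeta}, \eqref{sviluppomu}, \eqref{stimadizeta} and \eqref{normadiAlimitata}.
\begin{itemize}
\item Since $\mathop{\rm div}\beta=n+1+O(|z|)$, we get $I_1=(n+1)\int t^{1-2s}A\nabla w\cdot\nabla w+O(r)\int t^{1-2s}|\nabla w|^2$.
\item Since $J_\beta=\mathrm{Id}+O(|z|)$, we get $I_2=\int t^{1-2s}A\nabla w\cdot\nabla w+O(r)\int t^{1-2s}|\nabla w|^2$.
\item Because $A$ is Lipschitz by \eqref{Lip}, $dA$ is bounded, and $|\beta|=O(|z|)$; hence $I_3=O(r)\int t^{1-2s}|\nabla w|^2$.
\item Since $\zeta/\mu=1+O(|z|)$, we get $I_4=(1-2s)\int t^{1-2s}A\nabla w\cdot\nabla w+O(r)\int t^{1-2s}|\nabla w|^2$.
\end{itemize}
The leading coefficients add up to $(n+1)-2+(1-2s)=n-2s$. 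After dividing by $r$, this produces the term $\frac{n-2s}{r}\int_{B_r^+} t^{1-2s}A\nabla w\cdot\nabla w\,dz$ together with the error $O(1)\int_{B_r^+}t^{1-2s}|\nabla w|^2\,dz$.

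The main point is the estimate of $I_5$, where only $\tilde h\in W^{1,p}(B'_{r_0})$ with $p>n/(2s)$ is available.
\begin{itemize}
\item By \eqref{lestimedibeta'} we have $|\beta'|=O(|x|)\le Cr$ and $\mathop{\rm div}\beta'=O(1)$.
\item Therefore $|I_5|\le C\int_{B_r'}(r|\nabla\tilde h|+|\tilde h|)|\mathop{\rm Tr}w|^2\,dx$.
\item Apply Hölder with exponents $\frac{2^*_s}{2}$ and $\frac{n}{2s}$, since $\frac{2}{2^*_s}+\frac{2s}{n}=1$. This bounds the integral by $\|r|\nabla\tilde h|+|\tilde h|\|_{L^{n/(2s)}(B_r')}\big(\int|\mathop{\rm Tr}w|^{2^*_s}\big)^{2/2^*_s}$.
\item A second Hölder step from $L^{n/(2s)}$ to $L^p$ gains the factor $|B_r'|^{\frac{2s}{n}-\frac1p}\sim r^{\frac{2sp-n}{p}}$.
\item Since $\tilde h\in L^p$ (Sobolev embedding), the $L^{n/(2s)}$ norm of $r|\nabla\tilde h|+|\tilde h|$ is $O(r^{(2sp-n)/p})$.
\end{itemize}
Dividing by $r$ gives exactly $O(r^{-1+\frac{2sp-n}{p}})\big(\int_{B_r'}|\mathop{\rm Tr}w|^{2^*_s}\big)^{2/2^*_s}$. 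The bound for the $\tilde h$ part is in fact better than stated, so it is absorbed.

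Collecting everything gives \eqref{useful} for a.e. $r$, namely those $r$ where \eqref{poho} holds. The only genuine care needed is to keep the Hölder exponents consistent and to note that the $O(\cdot)$ constants depend only on $\tilde h$, $A$ and $\beta$, not on $r$ or on $w$. Lemma \ref{lemmafallfelli} is not needed at this stage; it will be used later to convert the trace term into energy.
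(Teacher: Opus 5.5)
Your proposal is correct and follows the paper's proof essentially verbatim. You rewrite \eqref{poho} divided by $r$ as in the paper's \eqref{comb1}, combine the four bulk terms via \eqref{stimadizeta}, \eqref{Aperturbazione}, \eqref{sviluppomu}, \eqref{lestimedibeta} with leading coefficient $(n+1)-2+(1-2s)=n-2s$, and bound the base term by \eqref{lestimedibeta'}, $\tilde h\in W^{1,p}(B'_{r_0})$ and H\"older to get the $O(r^{-1+\frac{2sp-n}{p}})$ factor. One small correction: the paper does invoke Lemma \ref{lemmafallfelli} at this step, only to ensure $|\mathop{\rm Tr}w|^2\in L^{2^\ast_s/2}(B'_r)$, so that the trace factor on the right-hand side is finite. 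Your remark that the lemma is not needed should be qualified accordingly.
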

\begin{proof}
  We start by reframing \eqref{poho} as follows:  
\begin{align}\label{comb1}
  \int  _{\partial^+B^+_r}&t^{1-2s}A\nabla w\cdot\nabla w\,dS
                            -  \int_{S_r}\tilde{h}|\mathop{\rm Tr}
                            w|^2\,dS'=
                            \,2\int_{\partial^+B^+_r}t^{1-2s} \frac{(A\nabla  w\cdot\nu)^2}{\mu}\,dS\\
  \notag &-\frac{1}{r} \int_{B'_r} (\mathop{\rm Tr} w)^2 \,
           (\nabla _x\tilde{h}\cdot\beta'+\tilde{h}\,\mathop{\rm
           div}\nolimits_x\beta' )\,dx
           +\frac{1}{r}\int_{B^+_r}(t^{1-2s}A\nabla  w\cdot\nabla w)\mathop{\rm div}\beta\,dz\\
  \notag   &-  \frac{2}{r}\int_{B^+_{r}}t^{1-2s}(\beta\cdot\nabla  w)
             (\widetilde{\mathbf{b}}\cdot\nabla _x  w+\tilde{a}
             w)\,dz-
             \frac{2}{r}\int_{B^+_{r}}t^{1-2s}J_\beta (A\nabla  w)  \cdot\nabla  w\,dz\\
  \notag   &+\frac{1}{r}\int_{B^+_{r}}t^{1-2s} (dA\nabla  w \nabla w)
             \cdot\beta\,dz +
             \frac{1-2s}{r}\int_{B^+_{r}}t^{1-2s}\frac{\zeta}{\mu}A\nabla  w\cdot\nabla  w\,dz 
\end{align}
for a.e. $r\in (0,r_0)$.  Now we take advantage of
\eqref{stimadizeta}, \eqref{Aperturbazione}, \eqref{sviluppomu} and
\eqref{lestimedibeta} to deduce that
    \begin{align}\label{comb2}
      &\frac{1}{r}\int_{B^+_r}(t^{1-2s}A\nabla  w\cdot\nabla
        w)\mathop{\rm div}\beta\,dz-
        \frac{2}{r}\int_{B^+_{r}}J_\beta (t^{1-2s}A\nabla  w)  \cdot\nabla  w\,dz\\
      \notag&+\frac{1}{r}\int_{B^+_{r}}t^{1-2s} (dA\nabla  w \nabla w)
              \cdot\beta\,dz
              + \frac{1-2s}{r}\int_{B^+_{r}}t^{1-2s}\frac{\zeta}{\mu}A\nabla  w\cdot\nabla  w\,dz\\
      \notag & \quad = 
               \frac{n-2s}{r}\int_{B^+_r}t^{1-2s}A\nabla  w\cdot\nabla
               w \,dz+O(1)
               \int_{B^+_r}t^{1-2s}|\nabla  w|^2\,dz 
     \end{align}
     as $r\to 0^+$, being $dA\nabla w\nabla w=O(1)|\nabla w|^2$ as
     $r\to 0^+$ thanks to \eqref{Lip} and \eqref{dCtilde}.  In
     addition, using \eqref{lestimedibeta'}, the fact that
     $\tilde{h}\in W^{1,p}(B'_{r_0})$ and
     $ |\mathop{\rm Tr} w|^2\in L^{2^\ast_s/2}(B'_r)$ by Lemma
     \ref{lemmafallfelli}, we can apply the H\"{o}lder inequality to
     conclude that
    \begin{equation}\label{comb3}
           -\frac{1}{r} \int_{B'_r} |\mathop{\rm Tr} w|^2 \,(\nabla
           _x\tilde{h}\cdot\beta'
           +\tilde{h}\,\mathop{\rm div}_x\beta' )\,dx
        =O(r^{-1+\frac{2sp-n}{p}})\left(\int_{B'_r}|\mathop{\rm Tr}
          w|^{2^\ast_s}\,dx
        \right)^{\frac{2}{2^\ast_s}}
      \end{equation}
      as $r\to 0^+$. The combination of \eqref{comb1}, \eqref{comb2}
      and \eqref{comb3} leads us to \eqref{useful}.
\end{proof}

\subsection{The Almgren frequency function}
In this subsection we introduce the Almgren frequency function
$\mathcal{N}$; then we prove its boundedness close to the origin via
the monotonicity of an appropriate perturbation of it and show the
existence of $\lim_{r\to 0^+}\mathcal{N}(r)$. In particular, the
former information yields certain doubling conditions which, in turn,
are the main ingredients for establishing a strong unique continuation
property; the latter will come into play in the local asymptotics
provided in Section \ref{seclocal}.

To begin with, we fix a non-trivial weak solution
$w\in H^1(B^+_{r_0}, t^{1-2s}dz)$ to problem
\eqref{problemadiWtilde}. For every $r\in (0,r_0)$, we define the
energy function
\begin{equation}\label{E}
  E(r):= r^{2s-n}\biggl( \int_{B^+_r} t^{1-2s} (A\nabla  w\cdot\nabla
  w+
  (\widetilde{\mathbf{b}}\cdot\nabla_x  w) w + \tilde{a}w^2)\,dz -
  \int_{B'_r} \tilde{h} |\mathop{\rm Tr}w|^2\, dx\biggr)
\end{equation}
and the weighted mass 
\begin{equation}\label{H}
    H(r):= r^{2s-n-1}\int_{\partial^+ B^+_r} t^{1-2s}\mu w^2\,dS.
\end{equation}
Henceforth, we simply write $w$ for its trace on $\partial^+B^+_r$ for
notational simplicity.

Our first goal is to provide an estimate for the derivative of $E$ via
the Pohozaev identity \eqref{poho}; to this end, we first establish
some preliminary results.  
\begin{lemma}\label{lemmahardytype}
    For all $r>0$ and $w\in H^1(B^+_r, t^{1-2s}dz)$ we have 
\begin{equation*}
        \int_{B^+_r}t^{1-2s}\frac{|w|^2}{|z|^2}\,dz
        \leq \frac{16}{(n-2s)^2}\biggl(\int_{B^+_r}t^{1-2s}|\nabla
        w|^2\, dz+
        \frac{n-2s}{2r}\int_{\partial^+B^+_r}t^{1-2s}\mu |w|^2\,dS\biggr).
   \end{equation*}
   \end{lemma}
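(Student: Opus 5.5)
This is a weighted Hardy inequality with a boundary term. I would prove it by the standard divergence-theorem argument, using the vector field $t^{1-2s}z/|z|^2$, and then absorb the weight $\mu$ through the lower bound \eqref{1sumulimitata}. By density, namely the definition of $H^1(B_r^+,t^{1-2s}dz)$ as a completion of smooth functions on $\overline{B_r^+}$ together with continuity of the trace \eqref{traceoper}, it suffices to take $w\in C^\infty(\overline{B^+_r})$. Both sides pass to the limit: the right-hand side by continuity, and the left-hand side by Fatou's lemma.

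\textbf{Step 1: the divergence identity.} For smooth $w$, compute
\[
\mathop{\rm div}\Big(t^{1-2s}\frac{z}{|z|^2}\Big)=t^{1-2s}\frac{n+1-2+(1-2s)}{|z|^2}=(n-2s)\frac{t^{1-2s}}{|z|^2}.
\]
This uses $\mathop{\rm div}(z/|z|^2)=(n-1)/|z|^2$ in $\R^{n+1}$, while $\partial_t t^{1-2s}\cdot t/|z|^2$ contributes $(1-2s)t^{1-2s}/|z|^2$. Multiply by $w^2$ and integrate over $B_r^+\setminus B_\varepsilon$. On the flat part $\{t=0\}$ the normal component $t^{1-2s}\,t/|z|^2$ vanishes, since $t^{2-2s}\to0$. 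The small sphere contributes $O(\varepsilon^{n-2s})\to0$ because $n>2s$. This gives
\[
(n-2s)\int_{B_r^+}t^{1-2s}\frac{w^2}{|z|^2}\,dz=\frac1r\int_{\partial^+B_r^+}t^{1-2s}w^2\,dS-2\int_{B_r^+}t^{1-2s}\frac{w\,\nabla w\cdot z}{|z|^2}\,dz .
\]

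\textbf{Step 2: Cauchy--Schwarz and Young.} Write $I=\int_{B_r^+} t^{1-2s}w^2/|z|^2\,dz$ and $G=\int_{B_r^+}t^{1-2s}|\nabla w|^2\,dz$. Then
\[
2\Big|\int t^{1-2s}\frac{w\nabla w\cdot z}{|z|^2}\Big|\le 2\sqrt{I}\sqrt{G}\le \frac{n-2s}{2}I+\frac{2}{n-2s}G.
\]
Hence
\[
\frac{n-2s}{2}I\le \frac1r\int_{\partial^+B_r^+}t^{1-2s}w^2\,dS+\frac{2}{n-2s}G,
\]
which rearranges to
\[
I\le \frac{4}{(n-2s)^2}\Big(G+\frac{n-2s}{2r}\int_{\partial^+B_r^+}t^{1-2s}w^2\,dS\Big).
\]

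\textbf{Step 3: inserting $\mu$.} By \eqref{1sumulimitata}, $w^2\le 4\mu w^2$ on $\partial^+B_r^+$. Using the crude bounds $G\le 4G$ and $\int w^2\le 4\int\mu w^2$ gives exactly the constant $16/(n-2s)^2$. Here $r$ is implicitly at most $r_0$, where $\mu$ is defined; for general $r$ one reads $\mu$ as its definition extended, or restricts to $r<r_0$.

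The only delicate point is the justification of the boundary terms in Step 1: the vanishing of the flux through $t=0$, and the singularity at the origin, which is integrable since $n>2s$. Both are routine for smooth $w$, and the density step handles the rest.
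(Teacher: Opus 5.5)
Your proof is correct and is essentially the paper's argument. The paper quotes the Hardy-type inequality with boundary term from \cite[Lemma 2.4]{FalFel14}, which is exactly your Step~2 estimate with the same constant $4/(n-2s)^2$ (you derive it yourself via the field $t^{1-2s}z/|z|^2$), and then inserts $\mu\ge\frac14$ from \eqref{1sumulimitata} as in your Step~3. Your remark that the statement implicitly requires $r<r_0$, where $\mu$ is defined, is also apt.
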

\begin{proof}
  The stated inequality directly follows from \cite[Lemma
  2.4]{FalFel14} and \eqref{1sumulimitata}.
\end{proof} Let us define
  $H^{1}({\mathbb S}^+,\theta_{n+1}^{1-2s}dS)$ as the completion of
  $C^\infty(\overline{{\mathbb S}^+})$ with respect to the norm
\begin{equation*}
\|\psi\|_{H^{1}({\mathbb S}^+,\theta_{n+1}^{1-2s}dS)}=\bigg(
\int_{{\mathbb S}^+}\theta_{n+1}^{1-2s}\big(|\nabla_{{\mathbb
        S}}\psi(\theta)|^2+\psi^2(\theta)\big)\,dS
\bigg)^{\!\!1/2},
\end{equation*}
where  $\nabla_{{\mathbb S}}$ denotes the Riemannian
 gradient on the unit sphere ${\mathbb S}$ endowed
 with the standard metric.

The next lemma provides a trace operator (again denoted as
$\mathop{\rm Tr}$) 
\begin{equation*}
  \mathop{\rm Tr}:    H^1(\mathbb{S}^+, \theta^{1-2s}_{n+1}dS)
  \to L^{2^\ast_s} (\mathbb{S}')
\end{equation*}
from $H^1(\mathbb{S}^+, \theta^{1-2s}_{n+1}dS)$ into
$L^{2^\ast_s} (\mathbb{S}')$, which is not only linear and
  continuous, but also compact, due to the validity of an Ehrling-type
  inequality.  To this aim, it is useful to recall the following
Sobolev trace inequality: 
\begin{equation}\label{Sobolev-trace} c_{n,s} \Vert
  \mathop{\rm Tr}v\Vert _{L^{2^\ast_s}(\R^n)}^2\leq
  \int_{\R^{n+1}_+}t^{1-2s}|\nabla v|^2\, dz\quad\text{for every
    $v\in C^\infty_c(\overline{\R^{n+1}_+})$,}
\end{equation}
for some positive constant $c_{n,s}>0$ depending only on $n$ and $s$,
see e.g. \cite[Equation (21)]{FalFel14}.
\begin{lemma}[Ehrling-type inequality for traces]
  \label{lemmadiimmersione}
  There exists a linear and  bounded trace operator
    $\mathop{\rm Tr}: H^1(\mathbb{S}^+, \theta^{1-2s}_{n+1}dS)\to
    L^{2^\ast_s} (\mathbb{S}')$ satisfying the following Ehrling-type
    inequality: for every $\varepsilon>0$, there exists
  $C_{\varepsilon,n,s}>0$ depending only on $\varepsilon$, $n$ and $s$
  such that 
\begin{equation}\label{sobolevineq}
  \left(\int_{\mathbb{S}'}|\mathop{\rm Tr}\psi|^{2^\ast_s}\,dS'
  \right)^{\frac{2}{2^\ast_s}}\leq
  C_{\varepsilon,n,s}\int_{\mathbb{S}^+} \theta_{n+1}^{1-2s}
  |\psi(\theta)|^2\,dS +\varepsilon \int_{\mathbb{S}^+}
  \theta_{n+1}^{1-2s}
  |\nabla_{\mathbb{S}}\psi(\theta)|^2\,dS
\end{equation}
for every $\psi\in H^1(\mathbb{S}^+,
\theta^{1-2s}_{n+1}dS)$. Furthermore,
$\mathop{\rm Tr}: H^1(\mathbb{S}^+, \theta^{1-2s}_{n+1}dS)\to
L^{2^\ast_s} (\mathbb{S}')$ is compact.
\end{lemma}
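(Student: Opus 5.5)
The plan is to transfer the Euclidean Sobolev trace inequality \eqref{Sobolev-trace} to the half-sphere, via a cut-off and a radial extension, and then to obtain both the $\varepsilon$-improvement and the compactness from the compact embedding of $H^1(\mathbb{S}^+,\theta_{n+1}^{1-2s}dS)$ into $L^2$ of the trace on $\mathbb{S}'$ (at a lower integrability exponent). By density it suffices to work with $\psi\in C^\infty(\overline{\mathbb{S}^+})$.

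\textbf{Step 1: continuity.} Given such a $\psi$, I would define $v(z)=\eta(|z|)\psi(z/|z|)$ on $\overline{\R^{n+1}_+}$, where $\eta\in C^\infty_c(1/2,2)$ and $\eta(1)=1$. After smoothing, $v$ is admissible in \eqref{Sobolev-trace}. In polar coordinates $z=\rho\theta$ one has $t^{1-2s}=\rho^{1-2s}\theta_{n+1}^{1-2s}$ and $|\nabla v|^2\le C(|\psi|^2+|\nabla_{\mathbb S}\psi|^2)$ on the annulus, so
\[
\int_{\R^{n+1}_+}t^{1-2s}|\nabla v|^2\,dz\le C\|\psi\|^2_{H^1(\mathbb{S}^+,\theta_{n+1}^{1-2s}dS)} .
\]
On the other side, $\mathop{\rm Tr}v(x)=\eta(|x|)\psi(x/|x|)$, and integrating in $|x|$ gives
\[
\|\mathop{\rm Tr}v\|^{2^\ast_s}_{L^{2^\ast_s}(\R^n)}=c_\eta\|\psi\|^{2^\ast_s}_{L^{2^\ast_s}(\mathbb{S}')}
\]
with $c_\eta>0$. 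Together with \eqref{Sobolev-trace}, this yields a bounded trace into $L^{2^\ast_s}(\mathbb{S}')$, which extends to the whole space by density.

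\textbf{Step 2: compactness at a subcritical exponent.} Next I would show that $\mathop{\rm Tr}:H^1(\mathbb{S}^+,\theta_{n+1}^{1-2s}dS)\to L^2(\mathbb{S}')$ is compact. The plan is to use the same extension. The map $\psi\mapsto v$ is bounded into $H^1$ of the half-annulus $\{1/2<|z|<2,\ t>0\}$ with weight $t^{1-2s}$. Weighted $H^1$ traces lie in $H^s$ on the flat part, and $H^s$ embeds compactly into $L^2$ on bounded sets; alternatively one can apply the compactness of the trace from $H^1(B_2^+,t^{1-2s}dz)$ into $L^2(B_2')$, which is standard (see \cite{JLX}). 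Restricting to $|x|\approx 1$ and averaging in the radius then recovers $L^2(\mathbb{S}')$ compactness of $\psi|_{\mathbb{S}'}$. A cleaner alternative is to observe that the trace is bounded into $H^s(\mathbb{S}')$, which embeds compactly into $L^2(\mathbb{S}')$.

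\textbf{Step 3: compactness at the critical exponent, and the Ehrling inequality.} The compactness in $L^{2^\ast_s}$ is the delicate point, since $2^\ast_s$ is the critical exponent for \eqref{Sobolev-trace}. Here I would exploit the codimension: the trace lands in $H^s$ of the $(n-1)$-dimensional sphere $\mathbb{S}'$, whose critical exponent $2(n-1)/(n-1-2s)$ exceeds $2n/(n-2s)=2^\ast_s$ (any exponent works if $n-1\le 2s$). So $2^\ast_s$ is subcritical on $\mathbb{S}'$, provided the trace really gains $H^s(\mathbb{S}')$ regularity. I would verify this by straightening $\mathbb{S}^+$ locally near $\mathbb{S}'$ into a half-space chart in the $n$ variables of the sphere, with weight $\mathrm{dist}^{1-2s}$, and applying the trace theorem there, in the same way as \cite{JLX}. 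If that is too heavy, the fallback is to interpolate: a bound in $L^q(\mathbb{S}')$ for some $q>2^\ast_s$ from Step 1 in dimension $n-1$, combined with Step 2's compactness in $L^2$, gives compactness in $L^{2^\ast_s}$.

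Once compactness is established, the Ehrling inequality \eqref{sobolevineq} follows by the standard contradiction argument. Suppose there are $\varepsilon>0$ and $\psi_k$ such that
\[
\|\mathop{\rm Tr}\psi_k\|^2_{L^{2^\ast_s}(\mathbb{S}')}=1>k\|\psi_k\|^2_{L^2(\mathbb{S}^+,\theta_{n+1}^{1-2s})}+\varepsilon\|\nabla_{\mathbb S}\psi_k\|^2_{L^2(\mathbb{S}^+,\theta_{n+1}^{1-2s})}.
\]
Then $(\psi_k)$ is bounded in $H^1$ and $\psi_k\to 0$ in the weighted $L^2$. Weak limits identify the limit of the traces as $0$, while compactness forces $\|\mathop{\rm Tr}\psi_k\|_{L^{2^\ast_s}(\mathbb{S}')}\to 1$, a contradiction. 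The constant so obtained depends only on $\varepsilon$, $n$ and $s$. I expect the main obstacle to be establishing the $H^s(\mathbb{S}')$ gain, or equivalently the supercritical $L^q$ bound, near $\mathbb{S}'$ in Step 3.
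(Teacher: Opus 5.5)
Your argument is correct, but it runs in the opposite direction from the paper's.

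\emph{The paper's route.} The paper never localizes near $\mathbb{S}'$. It plugs into \eqref{Sobolev-trace} exactly your Step~1 test function, except that the radial profile is $f_k=f(\cdot-k)$, a fixed bump translated out to radius $\approx k$. The coefficient in front of the angular gradient term, $\int_0^\infty r^{n-1-2s}|f_k|^2\,dr\big/\big(\int_0^\infty r^{n-1}|f_k|^{2^\ast_s}dr\big)^{2/2^\ast_s}$, is then $O(k^{-2s/n})$. The coefficient of the weighted $L^2$ term grows like $k^{2(n-s)/n}$. Taking $k$ large gives \eqref{sobolevineq} directly. Compactness of $\mathop{\rm Tr}$ is then deduced from \eqref{sobolevineq} together with the standard compact embedding of $H^1(\mathbb{S}^+,\theta_{n+1}^{1-2s}dS)$ into $L^2(\mathbb{S}^+,\theta_{n+1}^{1-2s}dS)$.

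\emph{Your route.} You establish compactness first and then recover \eqref{sobolevineq} by the standard contradiction argument. For compactness you use two facts. First, traces have $H^s(\mathbb{S}')$ regularity. This is true: in a chart near $\mathbb{S}'$ with $\theta_{n+1}$ as normal coordinate, the space is locally $H^1(\R^n_+,y_n^{1-2s}dy)$, whose trace space is $H^s(\R^{n-1})$. Second, $2^\ast_s$ is subcritical on the $(n-1)$-dimensional sphere.

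\emph{Comparison.} Your route explains conceptually why an exponent that is critical in the Euclidean inequality becomes compact on $\mathbb{S}'$. The cost is that it needs charts and a weighted trace theorem one dimension lower; your interpolation fallback needs the same charts. It also needs a separate look at the cases $n-1\le 2s$, and it produces a non-explicit constant. The paper's scaling trick is the same codimension phenomenon in disguise: at radius $k$ the half-sphere is nearly flat, and $-2s/n<0$ reflects that $2^\ast_s$ lies below the critical exponent of $\mathbb{S}'$. It costs one line and gives an explicit $C_{\varepsilon,n,s}\lesssim\varepsilon^{-(n-s)/s}$. In particular, your Step~1 alone (plus the $L^2$ compact embedding) would have sufficed had you let the support of $\eta$ drift off to infinity and tracked the dependence of the constants on it.
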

\begin{proof}
  By density, it is sufficient to prove inequality \eqref{sobolevineq}
  for every $\psi\in C^\infty(\overline{\mathbb{S}^+})$. For this, let
  $\psi\in C^\infty(\overline{\mathbb{S}^+})$ and
  $f\in C^\infty_c (0,+\infty)$, $f\not\equiv0$. We define
  $f_k(\cdot)= f(\cdot - k)$, with $k \geq 1$ large, to be chosen
    later depending on $\e$.  We introduce the polar
    coordinates
\begin{equation*}
  r=|z|>0 \quad \text{and}\quad \theta=\frac{z}{|z|}\in
  \mathbb{S}^+\quad \text{for every $z\in \R^{n+1}_+$ },
\end{equation*}
     and plug the function 
    $f_k(r)\psi(\theta)$ into \eqref{Sobolev-trace}, thus obtaining 
    \begin{align*}
      c_{n,s} &\left(\int_0^{+\infty}r^{n-1}|f_k(r)|^{2^\ast_s}\,dr
                \right)^{\frac{2}{2^\ast_s}}\left(\int_{\mathbb{S}'}
                |\psi(\theta',\theta_n,0)|^{2^\ast_s}\,dS'
                \right)^{\frac{2}{2^\ast_s}}\\
      \leq &\left(\int_0^{+\infty}
             r^{n+1-2s}|f_k'(r)|^2\,dr\right)\left(\int_{\mathbb{S}^+}
             \theta_{n+1}^{1-2s}|\psi(\theta)|^2\,dS\right)\\
              &\quad +\left(\int_0^{+\infty}
                r^{n-1-2s}|f_k(r)|^2\,dr\right)
                \left(\int_{\mathbb{S}^+} \theta_{n+1}^{1-2s}
                |\nabla _{\mathbb{S}}\psi(\theta)|^2\,dS\right).
    \end{align*}
    We observe that, if $k\geq 1$ is sufficiently large, then
\begin{equation}\label{eq:stquot}
  \frac{\int_0^{+\infty} r^{n-1-2s}|f_k(r)|^2\,dr}
  {\left(\int_0^{+\infty}r^{n-1}|f_k(r)|^{2^\ast_s}\,dr\right)^{\frac{2}{2^\ast_s}}}\leq
  \mathrm{const} \,  k^{-\frac{2s}{n}},
\end{equation}
for some $\mathrm{const}>0$ independent of $k$.  Therefore, to
  prove the desired trace inequality, given $\e>0$, it is sufficient
  to choose $k$ large enough such that the right-hand side of
  \eqref{eq:stquot} is smaller than~$\e$. Finally, we observe that the
  compactness of the trace map is a direct and standard consequence of
  the validity of the Ehrling-type inequality \eqref{sobolevineq}.
\end{proof}
As an immediate corollary of the previous lemma we obtain a
Sobolev-type inequality with boundary terms on the half-spheres
  $\partial^+ B^+_r$. To state this result precisely, we introduce
the following definition.  
\begin{definition}\label{spaziosulbordo}
  For every $r>0$, we define the Sobolev space
  $H^1(\partial^+ B^+_r, t^{1-2s}dS)$ as the completion of
  $C^\infty(\overline{\partial^+ B^+_r})$ with respect to the norm
\begin{equation*}
  \Vert v\Vert _{H^1(\partial^+ B^+_r, t^{1-2s}dS)} :=
  \left(\int_{\partial^+ B^+ _r} t^{1-2s}
    (v^2+|\nabla_{\partial B_r}
    v|^2)\,dS\right)^{1/2},
\end{equation*}
where $\nabla_{\partial B_r} v$ is the Riemannian
  gradient of $v$ on $\partial B_r$.
\end{definition}
We recall that, if $v\in C^\infty(\overline{\partial^+ B^+_r})$
  is the restriction to $\overline{\partial^+ B^+_r}$ of
  some function $\tilde v$ which is smooth in a neighbouhood of
  $\overline{\partial^+ B^+_r}$, then
  \begin{equation*}
    \nabla_{\partial B_r}v(z)=\nabla \tilde
    v(z)-\frac1{r^2}(\nabla\tilde v(z)\cdot z) z\quad\text{for every
    }z\in \partial^+ B^+_r,
  \end{equation*}
i.e. $\nabla_{\partial B_r}v(z)$ is the orthogonal projection of $\nabla
\tilde v(z)$ on the tangent space to $\partial B_r$ at $z$. In
particular
\begin{equation*}
  | \nabla_{\partial B_r}v(z)|^2\leq | \nabla_{\partial B_r}v(z)|^2
  +\frac1{r^2}(\nabla\tilde v(z)\cdot z)^2=|\nabla \tilde
  v(z)|^2 \quad\text{for every
  }z\in \partial^+ B^+_r.
\end{equation*}
Moreover, for every $v\in C^\infty(\overline{\partial^+ B^+_r})$,
\begin{equation}\label{eq:scal-grad}
  \nabla_{\partial B_r} v(z)= \frac{1}{r} \nabla _{\mathbb{S}}
v_r\left(\frac{z}{r}\right)\quad \text{for all $z\in \partial^+B^+_r$},
\end{equation}
where $v_r \in C^\infty(\overline{\mathbb S^+})$ is defined as
$v_r(\theta):= v(r\theta)$ for every $\theta\in \overline{\mathbb{S}^+}$.

We also observe that, if $w\in H^1(B^+_{R}, t^{1-2s}dz)$ for some
  $R>0$, the  coarea formula  and a density argument imply that, for
  a.e. $r\in(0,R)$, $w\big|_{\partial^+B^+_r}\in H^1(\partial^+ B^+_r,
  t^{1-2s}dS)$ with
  \begin{equation}\label{eq:stgr}
    \nabla_{\partial B_r}\big(w\big|_{\partial^+B^+_r}\big)=\nabla w
    -\frac1{r^2}(\nabla w\cdot z) z\quad
\text{and}\quad \big|\nabla_{\partial^+
      B^+_r}\big(w\big|_{\partial B_r}\big)\big|\leq |\nabla w|
    \quad\text{ a.e. in } \partial^+ B^+_r.
  \end{equation}
\begin{lemma}\label{lemmasobolev}
  There exists a positive constant $\bar{S}_{n,s}>0$ depending only on
  $n$ and $s$ such that, for every $r>0$ and
  $v\in H^1(\partial^+B^+_r, t^{1-2s}dS)$,
  \begin{equation*}
    r^{\frac{n}{2s}}\left(\int_{S_r}|\mathop{\rm
        Tr}v|^{2^\ast_s}\,dS'\right)^{\frac{2}{2^\ast_s}}\leq
    \bar{S}_{n,s} \int_{\partial^+ B^+_r}
    t^{1-2s}(r^{-2}|v|^2+|\nabla_{\partial B_r}
    v|^2)\,dS.
    \end{equation*}
\end{lemma}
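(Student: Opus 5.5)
The plan is to reduce, by scaling, to the unit half-sphere and then invoke Lemma \ref{lemmadiimmersione} with a fixed $\varepsilon$. No further estimates are needed; the substance is in Lemma \ref{lemmadiimmersione}, and the only real work is the bookkeeping of powers of $r$. By density (Definition \ref{spaziosulbordo}) it suffices to take $v\in C^\infty(\overline{\partial^+B^+_r})$. For such $v$ set $v_r(\theta):=v(r\theta)$, so that $v_r\in C^\infty(\overline{\mathbb S^+})$. Choosing $\varepsilon=1$ in \eqref{sobolevineq} gives
\begin{equation*}
\left(\int_{\mathbb S'}|\mathop{\rm Tr}v_r|^{2^\ast_s}dS'\right)^{\frac{2}{2^\ast_s}}\le \max\{C_{1,n,s},1\}\int_{\mathbb S^+}\theta_{n+1}^{1-2s}\big(|v_r|^2+|\nabla_{\mathbb S}v_r|^2\big)\,dS ,
\end{equation*}
with a constant depending only on $n$ and $s$.

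Next I change variables $z=r\theta$ on both sides. On $S_r$ the surface element scales as $dS'_z=r^{n-1}dS'_\theta$, so
\begin{equation*}
\left(\int_{S_r}|\mathop{\rm Tr}v|^{2^\ast_s}dS'\right)^{2/2^\ast_s}=r^{(n-1)\frac{n-2s}{n}}\left(\int_{\mathbb S'}|\mathop{\rm Tr}v_r|^{2^\ast_s}dS'\right)^{2/2^\ast_s}.
\end{equation*}
On $\partial^+B^+_r$ we have $t^{1-2s}=r^{1-2s}\theta_{n+1}^{1-2s}$ and $dS_z=r^n dS_\theta$. Moreover, by \eqref{eq:scal-grad}, $|\nabla_{\partial B_r}v|^2=r^{-2}|\nabla_{\mathbb S}v_r|^2$. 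Hence
\begin{equation*}
\int_{\partial^+B^+_r}t^{1-2s}\big(r^{-2}v^2+|\nabla_{\partial B_r}v|^2\big)dS=r^{n-1-2s}\int_{\mathbb S^+}\theta_{n+1}^{1-2s}\big(|v_r|^2+|\nabla_{\mathbb S}v_r|^2\big)dS .
\end{equation*}
The point of weighting the $L^2$ term by $r^{-2}$ is that both terms then carry the same power of $r$. This is exactly what makes a single, $r$-independent constant possible.

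Combining the three displays yields an inequality of the form $r^{\gamma}\big(\int_{S_r}|\mathop{\rm Tr}v|^{2^\ast_s}\big)^{2/2^\ast_s}\le \bar S_{n,s}\int_{\partial^+B^+_r}t^{1-2s}(r^{-2}v^2+|\nabla_{\partial B_r}v|^2)\,dS$, valid for all $r>0$ with $\bar S_{n,s}=\max\{C_{1,n,s},1\}$. The exponent $\gamma$ is forced by the exact homogeneity computed above. Since both sides transform by pure powers of $r$, the inequality can hold for all $r>0$ only for the single value $\gamma=(n-1-2s)-(n-1)\frac{n-2s}{n}=-\frac{2s}{n}$.

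This bookkeeping is the step I expect to be the main obstacle, because $-\frac{2s}{n}$ does not coincide with the exponent $\frac{n}{2s}$ written in the statement. A fixed $v_r\not\equiv0$ with $r\to\infty$ (or $r\to0$) shows that no other power can hold for every $r>0$ with an $r$-independent constant. I would therefore record the inequality with the exponent $-\frac{2s}{n}$ produced by the scaling and read the $\frac{n}{2s}$ in the statement as a misprint for it. Both are of the form ``$2s$ over $n$'' up to sign and inversion, which is consistent with a transcription slip. Finally, the density step passes to general $v\in H^1(\partial^+B^+_r,t^{1-2s}dS)$, using the continuity of the trace operator of Lemma \ref{lemmadiimmersione} transported to $S_r$ by the same scaling.
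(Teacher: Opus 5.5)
Your proposal is correct and is the paper's argument: rescale to $\mathbb S^+$ via $v_r=v(r\,\cdot)$, apply Lemma \ref{lemmadiimmersione} with $\varepsilon=1$, take $\bar S_{n,s}=\max\{1,C_{1,n,s}\}$, and track the powers of $r$. Your exponent is also the right one, since the paper's own computation ends with $\big(\int_{S_r}|\mathop{\rm Tr}v|^{2^\ast_s}dS'\big)^{2/2^\ast_s}\le \bar S_{n,s}\,r^{\frac{2s}{n}}\int_{\partial^+B^+_r}t^{1-2s}(r^{-2}|v|^2+|\nabla_{\partial B_r}v|^2)\,dS$, which is exactly the factor $r^{2s/n}$ used later in \eqref{assembling2}, so the $r^{\frac{n}{2s}}$ in the statement is indeed a misprint for $r^{-\frac{2s}{n}}$ (your scaling counterexample just needs $\mathop{\rm Tr}v_r\not\equiv0$ rather than $v_r\not\equiv0$).
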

\begin{proof} 
  We apply Lemma \ref{lemmadiimmersione} with $\varepsilon=1$ to the
  function
  $v_r :=v(r\,\cdot)\in H^1(\mathbb{S}^+,\theta_{n+1}^{1-2s}dS)$ and,
  in view of \eqref{eq:scal-grad}, we obtain
\begin{align*}
  \biggl(\int_{\mathbb{S}'}|\mathop{\rm Tr}
                              v_r(\theta',\theta_n)|^{2^\ast_s}&dS'
                              \biggr)^{\frac{2}{2^\ast_s}}\leq
                              C_{1,n,s}
                              \int_{\mathbb{S}^+}
                              \theta_{n+1}^{1-2s}|v_r(\theta)|^2\,dS
                              + \int_{\mathbb{S}^+}
                              \theta_{n+1}^{1-2s}|\nabla_{\mathbb{S}}
                              v_r(\theta)|^2\,dS\\
  &=  C_{1,n,s}\int_{\mathbb{S}^+}
           \theta_{n+1}^{1-2s}|v(r\theta)|^2\,dS
           + \int_{\mathbb{S}^+}
           \theta_{n+1}^{1-2s}r^2|\nabla_{\partial B_r}
           v(r\theta)|^2\,dS\\
  &=  C_{1,n,s}r^{2s-n-1}\int_{\partial^+ B^+_r} t^{1-2s}|v|^2\,dS +
           r^{2s-n+1}\int_{\partial^+ B^+_r} t^{1-2s}|\nabla_{\partial B_r} v|^2\,dS\\
  &\leq \bar{S}_{n,s}r^{2s-n+1}\left(r^{-2}\int_{\partial^+ B^+_r}
       t^{1-2s}|v|^2\,dS
       + \int_{\partial^+ B^+_r} t^{1-2s}|\nabla_{\partial B_r} v|^2\,dS\right),
\end{align*}
with $\bar{S}_{n,s}:= \max\{1,C_{1,n,s}\}$. 
From this we deduce that
   \begin{align*}
\left(\int_{S_r}|\mathop{\rm Tr}
     v|^{2^\ast_s}\,dS'\right)^{\frac{2}{2^\ast_s}}& =\left(r^{n-1}\int_{\mathbb{S}'}
  |\mathop{\rm Tr} v(r\theta',r\theta_n)|^{2^\ast_s}\,dS'\right)^{\frac{2}{2^\ast_s}}\\
&\leq  \bar{S}_{n,s}r^{\frac{(n-1)(n-2s)}{n}+2s-n+1}\int_{\partial^+
                                                                                          B^+_r} t^{1-2s}(r^{-2}|v|^2+ |\nabla_{\partial B_r} v|^2)\,dS\\
&= \bar{S}_{n,s} r^{\frac{2s}{n}}\int_{\partial^+ B^+_r} t^{1-2s}(r^{-2}|v|^2+ |\nabla_{\partial B_r} v|^2)\,dS,
   \end{align*}
   and  the proof is thereby complete. 
 \end{proof}
 \begin{lemma}\label{lemmastima1}
   Let $g\in L^p(B'_{r_0}) $ for some $p>\frac{n}{2s}$.  Then,
     possibly choosing $r_0$ smaller from the beginning, for every
   $r\in (0,r_0)$ and $v\in H^1(B^+_r, t^{1-2s}dz)$
\begin{align*}
      \int_{B^+_r} t^{1-2s} |\nabla v|^2\,dz &+\frac{n-2s}{2r}\int_{\partial^+
          B^+_r}
      t^{1-2s}\mu|v|^2\,dS+\biggl(\int_{B'_r}|
       \mathop{\rm Tr}v|^{2^\ast_s}\,dx\biggr)^{\frac{2}{2^\ast_s}}\\
      \leq \,C_{n,s} \biggl(&\int_{B^+_r}t^{1-2s}\big(A\nabla v
                              \cdot\nabla
                              v+(\widetilde{\mathbf{b}}\cdot
                              \nabla _x v )v+\tilde{a}(x)v^2\big)\,dz\\
      &-\int_{B'_r}g(x) |\mathop{\rm Tr}v|^2\,dx+\frac{n-2s}{2r}
        \int_{\partial^+B^+_r}t^{1-2s}\mu |v|^2\,dS \biggr) 
    \end{align*}
    for some positive constant $C_{n,s}>0$ depending only on $n$ and
    $s$.
\end{lemma}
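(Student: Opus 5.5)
Write $Q(v)$ for the bracket on the right-hand side, and $L(v)$ for the left-hand side without its trace term, i.e. $L(v):=\int_{B^+_r}t^{1-2s}|\nabla v|^2\,dz+\frac{n-2s}{2r}\int_{\partial^+B^+_r}t^{1-2s}\mu|v|^2\,dS$. The plan is to show that $Q(v)$ is bounded below by a fixed fraction of $L(v)$. The trace term is then added using Lemma \ref{lemmafallfelli}.

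\textbf{Step 1: baseline.} By \eqref{1sumulimitata}, $\mu\ge\frac14$, so Lemma \ref{lemmafallfelli} gives
\[
\Bigl(\int_{B'_r}|\mathop{\rm Tr}v|^{2^\ast_s}dx\Bigr)^{2/2^\ast_s}\le 4S_{n,s}\,L(v).
\]
Hence it is enough to prove $L(v)\le C\,Q(v)$. By \eqref{normadiAlimitata},
\[
\int_{B^+_r}t^{1-2s}A\nabla v\cdot\nabla v\,dz\ge\frac12\int_{B^+_r}t^{1-2s}|\nabla v|^2\,dz,
\]
so the main part of $Q(v)$ already controls $\frac12 L(v)$. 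It remains to show that the three perturbation terms together are at most $\frac14 L(v)$ once $r_0$ is small.

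\textbf{Step 2: the zero-order terms.} For the drift term, $|\widetilde{\mathbf b}|\le C$ by \eqref{nuovaregolarita}, and $|v|\le r\,|v|/|z|$ on $B_r^+$. Cauchy--Schwarz and Lemma \ref{lemmahardytype} then give
\[
\Bigl|\int_{B_r^+} t^{1-2s}(\widetilde{\mathbf b}\cdot\nabla_x v)v\,dz\Bigr|\le C r\Bigl(\int t^{1-2s}|\nabla v|^2\Bigr)^{1/2}\Bigl(\int t^{1-2s}\frac{v^2}{|z|^2}\Bigr)^{1/2}\le C' r\, L(v).
\]
Likewise, $\bigl|\int t^{1-2s}\tilde a v^2\,dz\bigr|\le \|\tilde a\|_\infty r^2\int t^{1-2s}v^2/|z|^2\,dz\le C r^2 L(v)$.

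\textbf{Step 3: the potential term (main obstacle).} This is the delicate step, because it requires the scale-invariant Sobolev exponent. By Hölder with exponents $\frac{p}{p-1}$ and $p$, and since $p>\frac n{2s}$ ensures $\frac{p}{p-1}\le \frac{2^\ast_s}{2}$,
\[
\int_{B'_r}|g||\mathop{\rm Tr}v|^2\,dx\le \|g\|_{L^p(B'_r)}\,|B'_r|^{\frac{2s}{n}-\frac1p}\Bigl(\int_{B'_r}|\mathop{\rm Tr}v|^{2^\ast_s}\Bigr)^{2/2^\ast_s}.
\]
Combined with Step 1, this is at most $C\|g\|_{L^p}\,r^{\frac{2sp-n}{p}}\,L(v)$, and the exponent $\frac{2sp-n}{p}$ is positive.

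\textbf{Conclusion.} Choosing $r_0$ small enough that $C'r_0+Cr_0^2+C\|g\|_{L^p}r_0^{(2sp-n)/p}\le\frac14$ gives $Q(v)\ge\frac14 L(v)$. Together with Step 1 this yields the statement with a constant depending on $n,s$, for instance $C_{n,s}=4(1+4S_{n,s})$.

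One point needs care: the constant $C_{n,s}$ may depend only on $n$ and $s$. The dependence on $g$, $\widetilde{\mathbf b}$ and $\tilde a$ is absorbed entirely into the choice of $r_0$, which is consistent with the phrase ``possibly choosing $r_0$ smaller'' in the statement.
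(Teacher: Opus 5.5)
Your proposal is correct and follows essentially the paper's proof. Both arguments use the ellipticity bound \eqref{normadiAlimitata}, control the drift and zero-order terms through Lemma \ref{lemmahardytype} with factors $r$ and $r^2$, bound the potential term by H\"older and Lemma \ref{lemmafallfelli} with the gain $r^{\frac{2sp-n}{p}}$, and let the dependence on $g$, $\widetilde{\mathbf{b}}$, $\tilde a$ enter only through the choice of $r_0$.

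The only difference is cosmetic. You convert the $g$-term into a multiple of $L(v)$ and add the trace term at the end. The paper instead keeps the trace term on the left and absorbs the $g$-term against a fraction $\frac{1}{64S_{n,s}}$ of it.
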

\begin{proof} 
We start by observing that by \eqref{normadiAlimitata}  
\begin{equation}\label{Amaggiore}
  \int_{B^+_r} t^{1-2s}A\nabla v\cdot\nabla v\,dz\geq \frac{1}{2}
  \int_{B^+_r} t^{1-2s} |\nabla v|^2\,dz\quad \text{for every $r\in (0,r_0)$}. 
\end{equation}
Moreover, by H\"{o}lder's inequality, the summability of
$g$ and the fact that $|\mathop{\rm Tr}v|^2\in L^{2^\ast_s/2}(B'_r)$
(as a result of Lemma \ref{lemmafallfelli}), we deduce that
\begin{equation}\label{intB'r}
  \int_{B'_r} g(x)|\mathop{\rm Tr}v|^2\,dx \leq
  \tilde{C} r^{\frac{2sp-n}{p}}\Vert g\Vert_{L^p(B'_{r_0})}
  \left(\int_{B'_r}|\mathop{\rm
      Tr}v|^{2^\ast_s}\,dx\right)^{\frac{2}{2^\ast_s}}
\end{equation}
where $\tilde{C}=\tilde{C}(n,s,p)=\omega_n^{\frac{2sp-n}{np}}>0$,
being $\omega_n$ the $n$-dimensional Lebesgue measure of $B'_1$.

On the other hand, by \eqref{nuovaregolarita}, Young's inequality, and
Lemma \ref{lemmahardytype}, we obtain
\begin{align}\label{1}
  \biggl|\int_{B^+_r}t^{1-2s}(\widetilde{\mathbf{b}}\cdot\nabla
  _xv)v\,dz\biggr|
  &\leq \,\Vert  \widetilde{\mathbf{b}} \Vert_{L^\infty(B'_{r_0})}r
    \int_{B_r^+}t^{1-2s} |\nabla _xv|\frac{|v|}{r}\,dz\\
  \notag&\leq \,\Vert  \widetilde{\mathbf{b}}
          \Vert_{L^\infty(B'_{r_0})}
          \frac{r}{2}\left(\int_{B^+_r}t^{1-2s} |\nabla _x v|^2\, dz
          +\int_{B^+_r}t^{1-2s} \frac{v^2}{|z|^2}\,dz\right)\\
  \notag &\leq \,C_1 r\left(\int_{B^+_r}t^{1-2s} |\nabla v|^2\, dz
           +\frac{n-2s}{2r}\int_{\partial^+ B^+_r} t^{1-2s} \mu |v|^2\, dS \right),
\end{align}
being
$C_1:=\frac12
  \|\widetilde{\mathbf{b}}\|_{L^\infty(B'_{r_0})}\big(\frac{16}{(n-2s)^2}+1\big)>0$
independent of $r$, and 
\begin{align}\label{2}
  \biggl|\int_{B^+_r} t^{1-2s} \tilde{a}v^2\,dz\biggr|
  &\leq \Vert \tilde{a}\Vert_{L^\infty(B'_{r_0})} r^2 \int_{B^+_r}
    t^{1-2s}\frac{v^2}{|z|^2}\,dz\\
  \notag&\leq   C_2 r^2\left(\int_{B^+_r}t^{1-2s} |\nabla  v|^2\, dz
          + \frac{n-2s}{2r}\int_{\partial^+ B^+_r} t^{1-2s} \mu |v|^2\, dS \right),
\end{align}
being
$C_2:= \Vert \tilde{a}\Vert_{L^\infty(B'_{r_0})}
\frac{16}{(n-2s)^2}>0$ independent of $r$.

Combining \eqref{Amaggiore}, \eqref{intB'r}, \eqref{1}, and \eqref{2}
finally yields
\begin{align}\label{quasistima}
  \biggl(&\int_{B^+_r}t^{1-2s}\big(A\nabla v\cdot\nabla
           v+(\widetilde{\mathbf{b}}
           \cdot\nabla _x v)v+\tilde{a}(x)v^2\big)\,dz-\int_{B'_r}g(x)
           |\mathop{\rm Tr}v|^2\,dx\\
  \notag&\quad+\frac{n-2s}{2r}\int_{\partial^+B^+_r}t^{1-2s}\mu |v|^2\,dS \biggr)\\
  \notag\geq &\left(\frac{1}{2}-C_1 r-C_2r^2\right)
               \int_{B^+_r}t^{1-2s} |\nabla v|^2\, dz -
               \tilde{C} r^{\frac{2sp-n}{p}}\Vert
               g\Vert_{L^p(B'_{r_0})}
               \left(\int_{B'_r}|\mathop{\rm Tr}v|^{2^\ast_s}\,dx
               \right)^{\frac{2}{2^\ast_s}}\\
  \notag&\quad+ \left(1-C_1 r-C_2 r^2\right)\frac{n-2s}{2r}\int_{\partial^+
          B^+_r}
          t^{1-2s}\mu|v|^2\,dS\\
  \notag\geq &
               \frac{3}{16}\left(\int_{B^+_r}t^{1-2s} |\nabla v|^2\,
               dz
+\frac{n-2s}{2r}\int_{\partial^+
          B^+_r}
          t^{1-2s}\mu|v|^2\,dS\right)\\
  \notag&\quad 
               +
               \left(\frac{1}{64\, S_{n,s}}- \tilde{C}
               r^{\frac{2sp-n}{p}}\Vert g\Vert_{L^p(B'_{r_0})}\right)
       \left(\int_{B'_r}|\mathop{\rm Tr}v|^{2^\ast_s}\,dx\right)^{\frac{2}{2^\ast_s}},
\end{align}
where we used Lemma \ref{lemmafallfelli} and  \eqref{1sumulimitata},
together with the fact that, possibly choosing $r_0$ smaller from the beginning, 
\begin{equation*}
  \frac{1}{2}-C_1 r -C_2 r^2\geq \frac{1}{4} \quad \text{for all $r\in
    (0,r_0)$}.
\end{equation*}
Possibly taking $r_0$ even smaller from the beginning so that
\begin{equation*}
  \frac{1}{64 \, S_{n,s}}- \tilde{C} r^{\frac{2sp-n}{p}}
  \Vert g\Vert_{L^p(B'_{r_0})}\geq   \frac{1}{65 \, S_{n,s}}
  \quad \text{for all $r\in (0,r_0)$},
\end{equation*}
 the conclusion follows from \eqref{quasistima}.
\end{proof}

\begin{lemma}\label{lemmaE'}
  For some fixed $w\in H^1(B^+_{r_0}, t^{1-2s}dz)\setminus\{0\}$
    weakly solving \eqref{problemadiWtilde}, let $E:~\!\!(0,r_0)\to\R$
    be the function defined in \eqref{E}. Then
  $E\in W^{1,1}_{\mathrm{loc}}(0,r_0]$. Furthermore, its distributional
    derivative $E'\in L^{1}_{\mathrm{loc}}(0,r_0]$ satisfies a.e. in
    $(0,r_0)$
\begin{align}\label{E'}
  \quad E'(r) &=r^{2s-n}\left(2\int_{\partial^+B^+_r} t^{1-2s}
           \frac{(A\nabla w\cdot\nu)^2}{\mu}\, dS+
           \int_{\partial ^+
           B^+_r}t^{1-2s}\big((\widetilde{\mathbf{b}}\cdot\nabla_xw)w
           +\tilde{a}w^2\big)\, dS\right)\\
\notag&\quad +O(r^{-1+\varepsilon})\left(E(r)+\frac{n-2s}{2}H(r)\right)
\end{align}
as $r\to 0^+$, with
\begin{equation}\label{vareps}
        \varepsilon:= \min\left\{1,\frac{2sp-n}{p}\right\}>0.
    \end{equation}
\end{lemma}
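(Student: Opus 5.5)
The plan is to differentiate $E$ directly and then substitute the estimate of Proposition \ref{proputile}. Write $E(r)=r^{2s-n}I(r)$, where
\begin{equation*}
I(r)=\int_{B^+_r} t^{1-2s}\big(A\nabla w\cdot\nabla w+(\widetilde{\mathbf{b}}\cdot\nabla_x w)w+\tilde a w^2\big)\,dz-\int_{B'_r}\tilde h|\mathop{\rm Tr}w|^2\,dx .
\end{equation*}
Both integrands are $L^1$ functions: the first lies in $L^1(B^+_{r_0},t^{1-2s}dz)$ by \eqref{nuovaregolarita}, and $\tilde h|\mathop{\rm Tr}w|^2\in L^1(B'_{r_0})$ by H\"older together with Lemma \ref{lemmafallfelli}. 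The coarea formula therefore gives $I\in W^{1,1}(0,r_0)$ with
\begin{equation*}
I'(r)=\int_{\partial^+B^+_r}t^{1-2s}\big(A\nabla w\cdot\nabla w+(\widetilde{\mathbf{b}}\cdot\nabla_x w)w+\tilde a w^2\big)\,dS-\int_{S_r}\tilde h|\mathop{\rm Tr}w|^2\,dS'
\end{equation*}
for a.e.\ $r$. Hence $E\in W^{1,1}_{\rm loc}(0,r_0]$ and $E'=(2s-n)r^{2s-n-1}I+r^{2s-n}I'$.

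Next, I replace the combination $\int_{\partial^+B^+_r}t^{1-2s}A\nabla w\cdot\nabla w\,dS-\int_{S_r}\tilde h|\mathop{\rm Tr}w|^2\,dS'$ appearing in $I'$ by the right-hand side of \eqref{useful}. This produces the term $2\int_{\partial^+B^+_r}t^{1-2s}(A\nabla w\cdot\nu)^2/\mu\,dS$ and the term $\frac{n-2s}{r}\int_{B^+_r}t^{1-2s}A\nabla w\cdot\nabla w\,dz$. Combining the latter with $(2s-n)r^{-1}I$ cancels the $A\nabla w\cdot\nabla w$ part exactly. What survives from $(2s-n)r^{-1}I$ is
\begin{equation*}
\frac{n-2s}{r}\Big(-\int_{B^+_r}t^{1-2s}\big((\widetilde{\mathbf{b}}\cdot\nabla_xw)w+\tilde a w^2\big)\,dz+\int_{B'_r}\tilde h|\mathop{\rm Tr}w|^2\,dx\Big).
\end{equation*}
The boundary integral $\int_{\partial^+B^+_r}t^{1-2s}((\widetilde{\mathbf{b}}\cdot\nabla_xw)w+\tilde a w^2)\,dS$ stays as it is in \eqref{E'}.

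It then remains to show that all the leftover terms, multiplied by $r^{2s-n}$, are $O(r^{-1+\varepsilon})\big(E(r)+\frac{n-2s}{2}H(r)\big)$. The main tool is Lemma \ref{lemmastima1} with $g=\tilde h$. Its right-hand side equals $C_{n,s}r^{n-2s}\big(E(r)+\frac{n-2s}{2}H(r)\big)$, so it controls
\begin{equation*}
K(r):=\int_{B^+_r}t^{1-2s}|\nabla w|^2\,dz+\frac{n-2s}{2r}\int_{\partial^+B^+_r}t^{1-2s}\mu w^2\,dS+\Big(\int_{B'_r}|\mathop{\rm Tr}w|^{2^*_s}\,dx\Big)^{2/2^*_s}.
\end{equation*}
In particular $E+\frac{n-2s}{2}H\ge0$. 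The leftover terms are bounded as follows.
\begin{itemize}
\item The $\widetilde{\mathbf{b}}$ and $\tilde a$ volume terms divided by $r$ are $O(1)K$, by \eqref{1} and \eqref{2}.
\item The $\tilde h$ term divided by $r$ is $O(r^{-1+\frac{2sp-n}{p}})K$, by \eqref{intB'r}.
\item The term $\frac{2}{r}\int_{B^+_r}t^{1-2s}(\beta\cdot\nabla w)(\widetilde{\mathbf{b}}\cdot\nabla_x w+\tilde a w)\,dz$ is $O(1)K$, using $|\beta|=O(r)$ from \eqref{lestimedibeta}, Young's inequality and Lemma \ref{lemmahardytype}.
\item The $O(r^{-1+\frac{2sp-n}{p}})$ and $O(1)$ error terms of \eqref{useful} are directly bounded by $K$.
\end{itemize}
Since $O(1)\le O(r^{-1+\varepsilon})$ with $\varepsilon$ as in \eqref{vareps}, this gives \eqref{E'}.

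I expect no serious obstacle beyond verifying the exact cancellation of the $\frac{n-2s}{r}\int A\nabla w\cdot\nabla w$ terms. A secondary point needing care is the a.e.\ validity of the trace formulas for $I'$. The key observation is that everything not appearing explicitly in \eqref{E'} is bounded by $r^{n-2s}(E+\frac{n-2s}{2}H)$, thanks to Lemma \ref{lemmastima1}.
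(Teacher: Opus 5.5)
Your proposal is correct and follows the paper's proof essentially step by step. You write $E=r^{2s-n}I$, differentiate via the coarea formula, and substitute \eqref{useful} so that the $\frac{n-2s}{r}\int_{B_r^+}t^{1-2s}A\nabla w\cdot\nabla w\,dz$ terms cancel. You then control every leftover term by $r^{n-2s}\big(E+\frac{n-2s}{2}H\big)$ through Lemma \ref{lemmastima1} with $g=\tilde h$, together with \eqref{1}, \eqref{2}, \eqref{intB'r} and Lemma \ref{lemmahardytype}, which is exactly what the paper does.
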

\begin{proof} 
  By definition, $E$ is the product between the
  $W^{1,\infty}_{\mathrm{loc}}(0,r_0]$-function $r\mapsto r^{2s-n}$
  and the $W^{1,1}(0,r_0)$-function
\begin{equation*}
  r\mapsto \int_{B^+_r} t^{1-2s} \big(A\nabla w\cdot\nabla w
  + (\widetilde{\mathbf{b}}\cdot\nabla_x w)w + \tilde{a}w^2\big)\,dz
  -\int_{B'_r} \tilde{h} |\mathop{\rm Tr}w|^2\, dx,
\end{equation*}
as a result of the coarea formula combined with
\eqref{normadiAlimitata}, \eqref{nuovaregolarita}, the Young
inequality and Lemma \ref{lemmahardytype} as for the first integral,
together with the fact that $\tilde{h}\in W^{1,p}(B'_{r_0})$,
$w^2\in L^{2^\ast_s/2}(B'_r)$ by Lemma \ref{lemmafallfelli} and the
H\"{o}lder inequality with regard to the second integral.  Hence
$E\in W^{1,1}_{\mathrm{loc}}(0,r_0]$ and
\begin{align}\label{toplugin}
    \quad E'(r)
  &= \,(2s-n)r^{2s-n-1}\biggl( \int_{B^+_r} t^{1-2s}
    \big(A\nabla w\cdot\nabla w + (\widetilde{\mathbf{b}}\cdot\nabla_x
    w)w
    + \tilde{a}w^2\big)\,dz\\
  \notag&\qquad\qquad\qquad\qquad\qquad  -\int_{B'_r} \tilde{h}
          |\mathop{\rm Tr}w|^2\, dx\biggr)\\
  \notag&\quad +\,r^{2s-n}\biggl(\int_{\partial^+ B^+_r}t^{1-2s}
          \big(A\nabla w\cdot\nabla w +
          (\widetilde{\mathbf{b}}\cdot\nabla_x w)w
          + \tilde{a}w^2\big)\,dS-\int_{S_r} \tilde{h} w^2\, dS'\biggr)
\end{align}
kMin a distributional sense and a.e. in $(0,r_0)$.

By Proposition \ref{proputile} we have
\begin{align*}
    r^{2s-n}&\biggl(\int_{\partial^+ B^+_r}t^{1-2s}A\nabla
              w\cdot\nabla w\,dS
              -\int_{S_r}\tilde{h}w^2\, dS'\biggr)\\
    &= 2r^{2s-n}\int_{\partial^+B^+_r}t^{1-2s} \frac{(A\nabla
      w\cdot\nu)^2}{\mu}\,dS
      -2 r^{2s-n-1}\int_{B^+_{r}}t^{1-2s}(\beta\cdot\nabla w)
      (\widetilde{\mathbf{b}}\cdot\nabla _x w+\tilde{a}w)\,dz\\
    &\quad + (n-2s)r^{2s-n-1}\int_{B^+_{r}}t^{1-2s}A\nabla w\cdot\nabla w\,dz\\
            &\quad +O(r^{\frac{2sp-n}{p}+2s-n-1})\left(\int_{B'_r}
              |\mathop{\rm Tr}w|^{2^\ast_s}\,dx\right)^{\frac{2}{2^\ast_s}}+
              O(r^{2s-n})\int_{B^+_r}t^{1-2s}|\nabla w|^2\,dz
\end{align*}
a.e. in $(0,r_0)$ as $r\to 0^+$. We now plug the last identity into
\eqref{toplugin} and observe that, similarly to \eqref{comb3},
\begin{equation*}
  -(2s-n)r^{2s-n-1}\int_{B'_r}\tilde{h}|\mathop{\rm Tr}w|^2\,dx
  =O(r^{\frac{2sp-n}{p}+2s-n-1})\left(\int_{B'_r}|\mathop{\rm Tr}
    w|^{2^\ast_s}\,dx
  \right)^{\frac{2}{2^\ast_s}}
  \end{equation*}
  as $r\to 0^+$, thus obtaining
  \begin{align}\label{Eì2}
            E'(r)&= 
        r^{2s-n}\left(2\int_{\partial^+B^+_r} t^{1-2s}\frac{(A\nabla
                   w\cdot\nu)^2}{\mu}\, dS+
                   \int_{\partial ^+
                   B^+_r}t^{1-2s}\big((\widetilde{\mathbf{b}}\cdot\nabla_xw)w
                   +\tilde{a}w^2\big)\, dS\right)\\
    \notag   &\quad +(2s-n)r^{2s-n-1}\left(\int_{B^+_r}t^{1-2s}
               \big((\widetilde{\mathbf{b}}\cdot\nabla_x w)w+\tilde{a}w^2\big)\,dz\right)\\
        \notag&\quad -2r^{2s-n-1}\int_{B^+_{r}}t^{1-2s}(\beta\cdot\nabla
                w)
                (\widetilde{\mathbf{b}}\cdot\nabla _x w+\tilde{a}w)\,dz \\
        \notag &\quad
                 +O(r^{\frac{2sp-n}{p}+2s-n-1})\left(\int_{B'_r}|\mathop{\rm
                 Tr}w|^{2^\ast_s}\,dx
                 \right)^{\frac{2}{2^\ast_s}}+O(r^{2s-n})\int_{B^+_r} t^{1-2s} |\nabla w|^2\,dz
  \end{align}
  a.e. in $(0,r_0)$ as $r\to 0^+$.
  To complete the proof, it remains only to show that the last four
  terms in \eqref{Eì2} are
  $O(r^{-1+\varepsilon})\left(E(r)+\frac{n-2s}{2}H(r)\right)$ as
  $r\to 0^+$ with $\varepsilon>0$ as in \eqref{vareps}.

  To this end, we observe that Lemma \ref{lemmastima1}, \eqref{E}, and \eqref{H} imply 
\begin{align}\label{importante}
    \int_{B^+_r} t^{1-2s} |\nabla w|^2\,dz&\leq
      \int_{B^+_r} t^{1-2s} |\nabla w|^2\,dz
      +\frac{n-2s}{2r}\int_{\partial^+
          B^+_r}
        t^{1-2s}\mu|v|^2\,dS\\
\notag &    \leq
    C_{n,s}r^{n-2s}\left(E(r)+\frac{n-2s}{2}H(r)\right),
  \end{align}
    and consequently, in view of Lemma \ref{lemmahardytype},
    \begin{equation}\label{importante1}
      \int_{B^+_r}t^{1-2s}\frac{|w|^2}{|z|^2}\,dz\leq
      \frac{16\, C_{n,s}}{(n-2s)^2}r^{n-2s}
        \left(E(r)+\frac{n-2s}{2}H(r)\right).
    \end{equation}
From \eqref{importante}, it immediately follows that 
\begin{equation}\label{merg1}
            r^{2s-n}\int_{B^+_r}t^{1-2s}|\nabla w|^2\,dz 
        =O(1)\left(E(r)+\frac{n-2s}{2}H(r)\right)\quad \text{as $r\to 0^+$}.
            \end{equation}
    Moreover, combining 
Lemma \ref{lemmafallfelli}, \eqref{1sumulimitata}, and
\eqref{importante} yields
    \begin{equation}\label{merg2}
            r^{\frac{2sp-n}{p}+2s-n-1}\left(\int_{B'_r} |\mathop{\rm
                Tr}w|^{2^\ast_s}\,dx
            \right)^{\frac{2}{2^\ast_s}}=O(r^{-1+\frac{2sp-n}{p}})\left(E(r)+
              \frac{n-2s}{2}H(r)\right) 
          \end{equation}
          as $r\to 0^+$.
    In addition, by \eqref{1}, \eqref{2}, and \eqref{importante}, we have  
\begin{equation}\label{claim}
    (2s-n)r^{2s-n-1} \biggl(\int_{B^+_r} t^{1-2s}
    \big((\widetilde{\mathbf{b}}\cdot \nabla _x w)w
    +\tilde{a}w^2\big)\,dz\biggr)
    =O(1)\left(E(r)+\frac{n-2s}{2}H(r)\right)
  \end{equation}
  as $r\to 0^+$.  Finally, we claim that
\begin{equation}\label{merg3}
    r^{2s-n-1}\int_{B^+_{r}}t^{1-2s}(\beta\cdot\nabla w)
    (\widetilde{\mathbf{b}}
    \cdot\nabla _x w+\tilde{a}w)\,dz =O(1)\left(E(r)+\frac{n-2s}{2}H(r)\right)
  \end{equation}
  as $r\to 0^+$. Indeed, from thanks to \eqref{nuovaregolarita},
  \eqref{lestimedibeta}, \eqref{importante}, \eqref{importante1}, and 
  Young's inequality, we have
  \begin{align*}
   \biggl|\int_{B^+_{r}}t^{1-2s}(\beta\cdot\nabla w)
  (\widetilde{\mathbf{b}}\cdot\nabla _x w
  +\tilde{a}w)\,dz\biggr|&\leq \mathrm{const}\cdot r
                           \left(\int_{B^+_r}t^{1-2s} |\nabla w|^2\,
                           dz +
                           \int_{B^+_r} t^{1-2s}\frac{|w|^2}{|z|^2}\,dz\right) \\
                         &\leq \mathrm{const}\cdot r^{n-2s+1}\left(E(r)+\frac{n-2s}{2}H(r)\right) 
  \end{align*}
  for some $\mathrm{const}>0$ (independent of $r$, and whose value may
  change from line to line), taking if necessary $r_0$ smaller from
  the beginning. Claim \eqref{merg3} is thereby proved.  By combining
  \eqref{Eì2}, \eqref{merg1}, \eqref{merg2}, \eqref{claim} and
  \eqref{merg3}, the proof of \eqref{E'} is complete.
\end{proof}

\begin{lemma}\label{lemmasuH}
  For some fixed $w\in H^1(B^+_{r_0}, t^{1-2s}dz)\setminus\{0\}$
  weakly solving \eqref{problemadiWtilde}, let $H:~\!\!(0,r_0)\to\R$
  be the function defined in \eqref{H}.  Then
  \begin{enumerate}
    \item[$\mathrm{(i)}$] $H(r)>0$ for every $r\in (0,r_0)$;
    \item[$\mathrm{(ii)}$]$H\in W^{1,1}_{\mathrm{loc}}(0,r_0]$ and,
      for a.e. $r\in (0,r_0)$,
    \begin{equation}\label{eqH'}
      H'(r)=2r^{2s-n-1}\int_{\partial^+ B^+_r} t^{1-2s}
      (A\nabla w\cdot\nu)w\,dS+O(H(r)) \quad \text{as $r\to 0^+$};
    \end{equation}
\item[$\mathrm{(iii)}$] for a.e. $r\in (0,r_0)$ 
        \begin{equation}\label{relazioneH'E}
            H'(r)=\frac{2}{r}E(r)+ O(H(r)) \quad \text{as $r\to 0^+$}.
        \end{equation}
    
\end{enumerate}
    \end{lemma}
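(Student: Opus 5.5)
I will prove the three items in the order (ii), (iii), (i), since the positivity argument uses the differential relations.

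\emph{Step (ii).} Pass to polar coordinates and write
\begin{equation*}
H(r)=\int_{\mathbb S^+}\theta_{n+1}^{1-2s}\mu(r\theta)\,w^2(r\theta)\,dS .
\end{equation*}
For $w$ smooth up to $\{t=0\}$, differentiation gives
\begin{equation*}
H'(r)=\int_{\mathbb S^+}\theta_{n+1}^{1-2s}\Big(2\mu\, w\,\nabla w\cdot\theta+(\nabla\mu\cdot\theta)\, w^2\Big)\,dS .
\end{equation*}
The second term is $O(H(r))$, because $\nabla\mu=O(1)$ by \eqref{sviluppomu} and $\mu\ge 1/4$ by \eqref{1sumulimitata}.

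For the first term, I would use the identity
\begin{equation*}
\mu\,\nabla w\cdot\theta=\frac{(A\theta\cdot\theta)\,(\nabla w\cdot\theta)}{1}.
\end{equation*}
This alone does not give $A\nabla w\cdot\theta$, since $A\nabla w\cdot\theta$ and $\mu\nabla w\cdot\theta$ differ by $(A-\mu\mathrm{Id})\nabla w\cdot\theta$. The cleaner route is a direct calculation in which the exact weight $\mu$ makes the identity work. One notes that $\beta=Az/\mu$ is the natural radial field, and differentiates $\int_{\partial^+B_r^+}t^{1-2s}\mu w^2\,dS$ via the divergence theorem applied to the vector field $t^{1-2s}w^2 Az/|z|$ on $B_r^+$. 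This yields
\begin{equation*}
\frac{d}{dr}\int_{B_r^+}\mathop{\rm div}\Big(t^{1-2s}w^2\frac{Az}{|z|}\Big)\,dz=\int_{\partial^+B_r^+}t^{1-2s}\mu w^2\,dS .
\end{equation*}
The boundary term on $\{t=0\}$ vanishes because the last component of $Az$ is $\zeta t$.

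After the scaling factor $r^{2s-n-1}$ is differentiated, the remaining contributions are:
\begin{itemize}
\item $2r^{2s-n-1}\int_{\partial^+B_r^+} t^{1-2s}(A\nabla w\cdot\nu)w\,dS$, coming from the symmetry of $A$;
\item terms involving $\mathop{\rm div}(Az/|z|)-(n+1-2s)/|z|$ and the $t$-derivative of the weight, which are $O(1)$ and $O(|z|^0)$ by \eqref{Aperturbazione}, \eqref{Lip} and \eqref{stimadizeta}.
\end{itemize}
These remainders are therefore absorbed into $O(H(r))$. Regularity $H\in W^{1,1}_{\rm loc}$ follows by density of smooth functions in $H^1(B_r^+,t^{1-2s}dz)$ together with the continuity of the trace \eqref{traceoper}.

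\emph{Step (iii).} Insert \eqref{identity} into \eqref{eqH'}. The boundary integral $\int_{\partial^+B_r^+}t^{1-2s}(A\nabla w\cdot\nu)w\,dS$ is exactly the bracket in \eqref{E}, that is $r^{n-2s}E(r)$. Substituting gives $H'=\frac2r E+O(H)$.

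\emph{Step (i).} I expect this to be the main obstacle. Suppose $H(R)=0$ for some $R$. Then $w=0$ on $\partial^+B_R^+$, and \eqref{identity} gives that the quantity in Lemma \ref{lemmastima1} (with $g=\tilde h$) vanishes on $B_R^+$. This forces $w\equiv0$ in $B_R^+$, provided $R<r_0$ with $r_0$ small as arranged in that lemma.

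To propagate this to the whole of $B_{r_0}^+$, I would apply classical unique continuation for the uniformly elliptic equation away from $\{t=0\}$. This conflicts with $w\not\equiv0$. If needed, I would instead argue via the analogous result for the original problem on the connected cylinder. That is where extra care is required, but interior analyticity-free unique continuation (Lipschitz $A$) suffices.
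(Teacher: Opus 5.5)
Your proposal is correct. Items (i) and (iii) follow the paper's argument: vanishing of $w$ on $\partial^+B^+_R$, then \eqref{identity} with Lemma \ref{lemmastima1}, then interior unique continuation away from $\{t=0\}$; and substitution of \eqref{identity} into \eqref{eqH'}.

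For (ii) you take a shorter route than the paper. The paper first differentiates with the radial field $t^{1-2s}|z|^{2s-n-2}z$, which is exactly divergence-free. This yields \eqref{aux2}, containing $\mu w\,\partial_\nu w$. A second divergence argument, with the tangential field $t^{1-2s}\mu(\beta-z)/|z|$, then trades $\mu\,\partial_\nu w$ for $A\nabla w\cdot\nu$ up to $O(H)$. Your field $X:=t^{1-2s}w^2Az/|z|$ is, up to the radial factor, precisely the sum of those two fields, since $Az=\mu\beta$. So both steps happen at once:
\begin{itemize}
\item $X\cdot\nu=t^{1-2s}\mu w^2$ on $\partial^+B^+_r$;
\item the normal component on $\{t=0\}$ is $-t^{2-2s}\zeta w^2/|z|\to0$;
\item symmetry of $A$ produces $2t^{1-2s}w\,(A\nabla w\cdot\nu)$ directly.
\end{itemize}
The paper's split buys the exact intermediate identity \eqref{aux2}, which it quotes again for the blow-up limit $\bar w$. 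Yours avoids the auxiliary computation with $\beta-z$.

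Three points in your Step (ii) should be fixed, though the intended computation is right.

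\emph{The display with $\frac{d}{dr}$ is false as written.} The divergence theorem gives $\int_{B_r^+}\mathop{\rm div}X\,dz=\int_{\partial^+B_r^+}t^{1-2s}\mu w^2\,dS$. The coarea formula then gives $\frac{d}{dr}\int_{\partial^+B_r^+}t^{1-2s}\mu w^2\,dS=\int_{\partial^+B_r^+}\mathop{\rm div}X\,dS$ for a.e.\ $r$. This also yields $W^{1,1}_{\rm loc}$ directly, since $\mathop{\rm div}X\in L^1(B^+_{r_0})$ by Cauchy--Schwarz and Lemma \ref{lemmahardytype}; density plus trace continuity is not the right tool here.

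\emph{The two singular terms are not separately $O(1)$.} Neither $\mathop{\rm div}(Az/|z|)-(n+1-2s)/|z|$ nor the $t$-derivative of the weight is bounded on its own; each is of order $|z|^{-1}$, and only their sum is bounded:
\begin{equation*}
\mathop{\rm div}X=2t^{1-2s}w\,\Big(A\nabla w\cdot\frac{z}{|z|}\Big)+t^{1-2s}w^2\,\frac{\mathop{\rm tr}A+\sum_{j,k}(\partial_{z_j}a_{jk})z_k-\mu+(1-2s)\zeta}{|z|}.
\end{equation*}
Here the numerator equals $n+1-2s+O(|z|)$ by \eqref{Lip}, \eqref{Aperturbazione}, \eqref{sviluppomu}, and \eqref{stimadizeta}.

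\emph{The leading term does not cancel exactly.} The term $\frac{n+1-2s}{r}\int_{\partial^+B^+_r}t^{1-2s}w^2\,dS$ cancels the derivative of $r^{2s-n-1}$ only up to $\frac{n+1-2s}{r}\int_{\partial^+B^+_r}t^{1-2s}(1-\mu)w^2\,dS$. This is $O(r^{n+1-2s}H(r))$ because $\mu=1+O(|z|)$ and $\mu\ge\frac14$. With these corrections, \eqref{eqH'} follows.
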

    \begin{proof}
      To prove (i) we argue by contradiction assuming that
      $H(\bar{r})=0$ for some $\bar{r}\in (0,r_0)$. Due to
      \eqref{1sumulimitata}, this in particular
      yields 
\begin{equation}\label{nonecostante}
  w\equiv 0 \quad \text{on $\partial^+ B^+_{\bar{r}}$}.
\end{equation}
Hence, from \eqref{identity} and Lemma \ref{lemmastima1} we deduce
\begin{align*}
  0&= \int_{B^+_{\bar{r}}} t^{1-2s} \big(
     A\nabla  w\cdot\nabla  w+
     (\widetilde{\mathbf{b}}\cdot\nabla_x  w)w
     + \tilde{a}(x) w^2\big)\,dz-\int_{B'_{\bar{r}}} \tilde{h}
     |\mathop{\rm Tr}w|^2\, dx\\
   &\geq \frac{1}{C_{n,s}}\int_{B^+_{\bar{r}}} t^{1-2s} |\nabla w|^2\,dz.
\end{align*}
From this and \eqref{nonecostante}, it follows that $w\equiv 0$ on the
whole $B^+_{\bar{r}}$.  By invoking classical unique continuation
principles for second order elliptic operators with locally bounded
coefficients (see, e.g., \cite{Wolff}), we conclude that $w\equiv 0$
on $B^+_{r_0}\cap \{t>\delta\}$ for any $\delta>0$, so that
$w\equiv 0$ on $B^+_{r_0}$. This leads to the desired contradiction.

We now proceed to the proof of (ii). We observe that
$H\in L^1_{\mathrm{loc}}(0,r_0]$, being $H$ the product between the
$L^\infty_{\mathrm{loc}}(0,r_0]$-function $r\mapsto r^{2s-n-1}$ and
the function
$r\mapsto \int_{\partial^+ B^+_r} t^{1-2s}\mu(z) w^2\, dS$ which
belongs to $L^1(0,r_0)$ by \eqref{sviluppomu}. In order to prove
\eqref{eqH'}, we first show that, in a distributional sense and
a.e. in $(0,r_0)$,
\begin{equation}\label{auxH'}
  H'(r)= 2r^{2s-n-1}\int_{\partial ^+ B^+_r} t^{1-2s} \mu
  w\frac{\partial w}{\partial\nu}\,dS +O(H(r))\quad \text{as $r\to 0^+$}.
\end{equation}
Indeed, for every $\varphi\in C^\infty_c(0,r_0)$, by the Divergence Theorem
\begin{align*}
    \int_0^{r_0} H(r) \varphi'(r)\,dr
    &    =\int_{B^+_{r_0}}  t^{1-2s}|z|^{2s-n-1}\mu(z) w^2(z) \varphi'(|z|)\,dz\\
    &=  \int_{B^+_{r_0}}  t^{1-2s}|z|^{2s-n-2}\mu(z) w^2(z) \nabla
      \tilde{\varphi}
      (z)\cdot z\,dz\\
    &= -\int_{B^+_{r_0}}
      t^{1-2s}\tilde{\varphi}(z)|z|^{2s-n-2}(2\mu(z)w(z)\nabla
      w(z)+w^2(z)\nabla \mu(z)) \cdot z\,dz\\
    &=- \int_0^{r_0} r^{2s-n-1}\left(\int_{\partial^+
      B^+_r}t^{1-2s}\big(2\mu w
      \partial_\nu w + w^2\nabla\mu\cdot\nu\big)\,dS\right)\varphi(r)\,dr,
\end{align*}
where $\tilde{\varphi}(z):=\varphi(|z|)$. We conclude that 
\begin{equation}\label{aux2}
  H'(r)= r^{2s-n-1}\int_{\partial^+ B^+_r}t^{1-2s}
  \big(2\mu w\partial_\nu w + w^2\nabla\mu\cdot\nu\big)\,dS
\end{equation}
in a distributional sense. Since
$w\in H^1(B^+_r,t^{1-2s}dz)$, by \eqref{sviluppomu} we have
\begin{equation*}
  r\mapsto \int_{\partial^+ B^+_r}t^{1-2s}\big(2\mu w \partial_\nu w
  + w^2\nabla\mu\cdot\nu\big)\,dS
\end{equation*}
is a $L^1(0,{r_0})$-function, so that
$H\in W_{\mathrm{loc}}^{1,1}(0,{r_0}]$ as a consequence of
\eqref{aux2}. In particular, \eqref{aux2} also holds a.e. in
$(0,{r_0})$. From \eqref{sviluppomu} it follows that
\begin{equation*}
  r^{2s-n-1}\int_{\partial^+ B^+_r} t^{1-2s}  w^2\nabla
  \mu\cdot\nu\,dS=O(H(r))
  \quad \text{as $r\to 0^+$},
\end{equation*}
thus implying \eqref{auxH'} in a distributional sense and a.e. in
$(0,{r_0})$ in view of \eqref{aux2}.
  
To prove \eqref{eqH'}, we first observe that, by \eqref{beta},
\begin{multline}\label{secondaformula}
    2r^{2s-n-1}\int_{\partial^+ B^+_r} t^{1-2s}\mu w\frac{\partial w}{\partial\nu} \,dS 
    = 2r^{2s-n-1}\int_{\partial^+ B^+_r} t^{1-2s}\mu w\nabla w
      \cdot\frac{z}{|z|} \,dS \\
    =2r^{2s-n-1} \int_{\partial^+ B^+_r} t^{1-2s} (A\nabla 
    w\cdot\nu)w\,dS-r^{2s-n-1} \int_{\partial^+ B^+_r} t^{1-2s}
    \mu \nabla (w^2)\cdot\frac{\beta-z}{|z|}\,dS,
  \end{multline}
for a.e. $r\in (0,{r_0})$. 

In particular \eqref{mu} and \eqref{beta} imply that
$ (\beta-z)\cdot z=0$. Thus, by applying the Divergence Theorem
followed by the coarea formula, we
deduce that, for a.e.  $r\in (0,{r_0})$,
\begin{equation}\label{pluginfo}
  0= \int_{\partial^+ B^+_r} \mathop{\rm div}\left(t^{1-2s}
    \frac{\mu(\beta-z)}{|z|}\right)w^2\,dS +
  \int_{\partial^+ B^+_r}  t^{1-2s} \mu \nabla (w^2)\cdot \frac{\beta-z}{|z|} \,dS. 
\end{equation}
We observe that
\begin{align*}
  \mathop{\rm div}\left(t^{1-2s} \frac{\mu(\beta-z)}{|z|}\right)
  &= (1-2s) \frac{\zeta-\mu}{|z|}t^{1-2s} +
    t^{1-2s}\nabla \mu\cdot \frac{\beta -z}{|z|}\\
  &\quad  + t^{1-2s}\mu\frac{ \mathop{\rm div}(\beta-z)}{|z|}
    - t^{1-2s}\mu (\beta-z)\cdot\frac{z}{|z|^3},
\end{align*}
so that
$\mathop{\rm div}(t^{1-2s} \frac{\mu(\beta-z)}{|z|}) = t^{1-2s}O(1)$
as $|z|\to 0$, in view of \eqref{stimadizeta}, \eqref{sviluppomu}, and
\eqref{lestimedibeta}. Inserting this into \eqref{pluginfo}, we obtain
\begin{equation}\label{toplug2}
  \int_{\partial^+ B^+_r}  t^{1-2s} \mu \nabla (w^2)\cdot
    \frac{\beta-z}{|z|}\,dS= r^{n+1-2s} O(H(r)) \quad \text{as $r\to 0^+$}. 
\end{equation}
By combining \eqref{toplug2} and \eqref{secondaformula} with
\eqref{auxH'}, we obtain \eqref{eqH'}.

Finally, by \eqref{identity} and \eqref{eqH'} 
        \begin{equation*}
          \int_{B^+_r} t^{1-2s} \big(A\nabla w\cdot\nabla w+
          (\widetilde{\mathbf{b}}\cdot\nabla_x w)w +\tilde{a}w^2\big)\,dz 
            -\int_{B'_r} \tilde{h}|\mathop{\rm Tr} w|^2\,
            dx=\frac{H'(r)}{2r^{2s-n-1}}
            + O\left(\frac{H(r)}{r^{2s-n-1}}\right)
        \end{equation*}
        as $r\to 0^+$. The above estimate directly yields
        \eqref{relazioneH'E}, because the left-hand side is equal to
        $r^{n-2s}E(r)$, see \eqref{E}.
    \end{proof}
Thanks to Lemma \ref{lemmasuH}, the Almgren frequency function 
\begin{equation}\label{N}
\mathcal N:(0,r_0)\to\R,\quad \mathcal{N}(r):= \frac{E(r)}{H(r)},
\end{equation}
is well-defined. Furthermore,
$\mathcal{N}\in W^{1,1}_{\mathrm{loc}}(0,r_0]$ as a consequence of
Lemma \ref{lemmaE'} and Lemma \ref{lemmasuH}.  In the following lemma,
we prove that $\mathcal{N}$ is bounded from below.
\begin{lemma}
    For every $r\in (0,r_0)$
\begin{equation}\label{Nmaggiore}
       \mathcal{N}(r)+\frac{n-2s}{2}\geq \frac{n-2s}{2 C_{n,s}}>0.
    \end{equation}
\end{lemma}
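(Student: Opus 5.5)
The bound follows by applying Lemma \ref{lemmastima1} to the solution $w$ itself, with $g=\tilde h$, which is admissible since $\tilde h\in W^{1,p}(B'_{r_0})\subset L^p(B'_{r_0})$ with $p>\frac n{2s}$. We then rewrite the resulting inequality in terms of $E$ and $H$ and divide by $H(r)$, which is positive by Lemma \ref{lemmasuH}(i).

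\textbf{Step 1: the inequality from Lemma \ref{lemmastima1}.} For every $r\in(0,r_0)$, the bracket on the right-hand side of Lemma \ref{lemmastima1} (with $v=w$, $g=\tilde h$) equals
\begin{equation*}
r^{n-2s}E(r)+\frac{n-2s}{2r}\int_{\partial^+B^+_r}t^{1-2s}\mu w^2\,dS
=r^{n-2s}\Big(E(r)+\frac{n-2s}{2}H(r)\right)\cdot 1 ,
\end{equation*}
where we used \eqref{E} and the identity $\frac1r\int_{\partial^+B^+_r}t^{1-2s}\mu w^2\,dS=r^{n-2s}H(r)$, which follows from \eqref{H}.

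\textbf{Step 2: lower bound on the left-hand side.} On the left-hand side of Lemma \ref{lemmastima1} we discard the two nonnegative terms $\int_{B^+_r}t^{1-2s}|\nabla w|^2\,dz$ and $\big(\int_{B'_r}|\mathop{\rm Tr}w|^{2^\ast_s}dx\big)^{2/2^\ast_s}$. What remains is
\begin{equation*}
\frac{n-2s}{2r}\int_{\partial^+B^+_r}t^{1-2s}\mu w^2\,dS=\frac{n-2s}{2}\,r^{n-2s}H(r).
\end{equation*}

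\textbf{Step 3: conclusion.} Combining the two steps gives
\begin{equation*}
\frac{n-2s}{2}\,r^{n-2s}H(r)\le C_{n,s}\,r^{n-2s}\Big(E(r)+\frac{n-2s}{2}H(r)\Big).
\end{equation*}
Dividing by $C_{n,s}\,r^{n-2s}H(r)>0$ yields
\begin{equation*}
\mathcal N(r)+\frac{n-2s}{2}\ge \frac{n-2s}{2C_{n,s}},
\end{equation*}
which is \eqref{Nmaggiore}. The right-hand side is positive because $n>2s$.

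\textbf{Remaining care.} There is no real obstacle; the argument only requires the following checks.
\begin{itemize}
\item Lemma \ref{lemmastima1} holds for every $r\in(0,r_0)$, after the reduction of $r_0$ already made in its proof.
\item $g=\tilde h$ is an admissible choice of $g$ in that lemma.
\item The trace of $w$ on $\partial^+B^+_r$ appearing in the lemma coincides with the one used in the definition \eqref{H} of $H$.
\end{itemize}
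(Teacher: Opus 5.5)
Your proof is correct and follows the paper's argument exactly. The paper likewise applies Lemma \ref{lemmastima1} to $w$ with $g=\tilde h$, rewrites the right-hand bracket as $r^{n-2s}\big(E(r)+\frac{n-2s}{2}H(r)\big)$, and keeps only the boundary term on the left to obtain $0<H(r)\leq \frac{2C_{n,s}}{n-2s}\big(E(r)+\frac{n-2s}{2}H(r)\big)$; it then divides by $H(r)>0$ from Lemma \ref{lemmasuH}.
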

\begin{proof}
From Lemma \ref{lemmastima1}, the definitions of $E$ and $H$ (see \eqref{E} and \eqref{H},
respectively), and Lemma \ref{lemmasuH}, it follows that 
  \begin{equation}
    \label{eq:stH-E+H}
   0< H(r)\leq \frac{2
      C_{n,s}}{n-2s}\left(E(r)+\frac{n-2s}2H(r)\right)\quad\text{for
      every }r\in(0,r_0).
  \end{equation}
  Hence,  the
    definition of $\mathcal{N}$ given in \eqref{N} directly implies \eqref{Nmaggiore}.
\end{proof}

Let us consider now the set
\begin{equation*}
    \Xi := \{r\in (0,r_0) : \mathcal{N}'(r)\leq 0\}.
\end{equation*}
\begin{lemma}\label{lemmaN'}
If $0$ is a limit point of $\Xi$, then we have, for a.e. $r\in (0,r_0)$,
    \begin{equation}\label{N'maggiore}
  \mathcal{N}'(r)\geq O(r^{-1+\varepsilon})\left(\mathcal{N}(r)+\frac{N-2s}{2}\right)
    \end{equation}
    as $r\to 0^+$, being $\varepsilon>0$  defined as in \eqref{vareps}.
\end{lemma}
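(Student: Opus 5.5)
We compute $\mathcal N'$ directly from Lemmas \ref{lemmaE'} and \ref{lemmasuH} and use the Cauchy--Schwarz inequality to show that the main term is nonnegative; everything else is absorbed into the error. Since $\mathcal N=E/H$ with $E,H\in W^{1,1}_{\mathrm{loc}}(0,r_0]$ and $H>0$, we have a.e.
\[
\mathcal N'(r)=\frac{E'(r)H(r)-E(r)H'(r)}{H^2(r)}.
\]
We substitute \eqref{E'} for $E'$ and \eqref{eqH'} for $H'$. To write $E$ itself as a boundary term, we use \eqref{identity} together with \eqref{E}:
\[
E(r)=r^{2s-n}\int_{\partial^+B_r^+}t^{1-2s}(A\nabla w\cdot\nu)w\,dS.
\]
This gives
\[
E(r)H'(r)=\frac{2r^{4s-2n-1}\Big(\int_{\partial^+B_r^+}t^{1-2s}(A\nabla w\cdot\nu)w\,dS\Big)^2}{1}+O(E(r)H(r)).
\]
The principal part of $E'H$ is
\[
2r^{4s-2n-1}\Big(\int_{\partial^+B^+_r}t^{1-2s}\frac{(A\nabla w\cdot\nu)^2}{\mu}\,dS\Big)\Big(\int_{\partial^+B^+_r}t^{1-2s}\mu w^2\,dS\Big).
\]
By Cauchy--Schwarz, writing $(A\nabla w\cdot\nu)w=\big((A\nabla w\cdot\nu)/\sqrt\mu\big)\big(\sqrt\mu\, w\big)$, this principal part dominates $2r^{4s-2n-1}\big(\int t^{1-2s}(A\nabla w\cdot\nu)w\big)^2$. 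Hence the leading contributions combine to something $\ge 0$.

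The remaining terms must be controlled by $O(r^{-1+\varepsilon})(\mathcal N+\frac{n-2s}{2})$. They are of three kinds.

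\begin{itemize}
\item From $E'$, the term $O(r^{-1+\varepsilon})(E+\frac{n-2s}2H)H$. Divided by $H^2$, this is exactly of the required form.
\item From $EH'$, the term $O(E H)$. Divided by $H^2$ it gives $O(1)\mathcal N$, which is $O(1)(\mathcal N+\frac{n-2s}2)+O(1)$. The constant $O(1)$ is then $\le O(1)(\mathcal N+\frac{n-2s}2)$ by \eqref{Nmaggiore}, since $\mathcal N+\frac{n-2s}2$ is bounded below by a positive constant. So it is $O(r^{-1+\varepsilon})(\mathcal N+\frac{n-2s}2)$ because $r^{-1+\varepsilon}\ge 1$ for small $r$ (recall $\varepsilon\le1$).
\item From $E'$, the boundary term $r^{2s-n}\int_{\partial^+B_r^+}t^{1-2s}\big((\widetilde{\mathbf b}\cdot\nabla_x w)w+\tilde a w^2\big)dS$ multiplied by $H$. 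This is the delicate one.
\end{itemize}

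For the boundary term, the $\tilde a$ part is fine: since $\mu\ge 1/4$, it is $O(r^{2s-n}r^{n+1-2s}H)=O(rH)$, so after division it gives $O(r)$. That is absorbed as in the second item above.

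The $\widetilde{\mathbf b}$ part involves a gradient on the sphere, which is not controlled pointwise in $r$ by $E$ and $H$. This is where the hypothesis that $0$ is a limit point of $\Xi$ (which is not used elsewhere) comes in, and I expect this to be the main obstacle. Split $\nabla_x w$ into its normal and tangential components. For the normal component I use Young's inequality:
\[
r^{2s-n}\Big|\int t^{1-2s}(\widetilde{\mathbf b}\cdot\nu)(\partial_\nu w) w\Big|\le \eta\, r^{2s-n}\int t^{1-2s}\frac{(A\nabla w\cdot\nu)^2}{\mu}+C_\eta\, r^{2s-n}\int t^{1-2s}w^2+\text{l.o.t.},
\]
where l.o.t.\ comes from $\mu\partial_\nu w=A\nabla w\cdot\nu+O(r)|\nabla w|$. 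The first piece is absorbed by the slack in the Cauchy--Schwarz term. This requires keeping a factor, e.g.\ using only $(2-\eta)$ of the leading term in the Cauchy--Schwarz comparison while the cross term still produces a nonnegative quantity up to $\eta$-multiples of the positive main part. The second piece gives $O(r)H$.

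For the tangential component (and for the l.o.t.), the goal is to bound $\int_{\partial^+B_r^+}t^{1-2s}|\nabla w|^2dS$ by $\frac{C}{r}r^{n-2s}(E+\frac{n-2s}2H)$ plus an absorbable multiple of the normal term. For this I would use \eqref{useful} at radii in $\Xi$. From $\mathcal N'\le 0$ one gets $\int_{\partial^+}t^{1-2s}(A\nabla w\cdot\nu)^2/\mu \lesssim \frac1r r^{n-2s}(E+\frac{n-2s}2H)$. Then \eqref{useful} and Lemma \ref{lemmastima1} bound the full boundary gradient by $r^{-1}r^{n-2s}(E+\frac{n-2s}2H)$, with the trace term on $S_r$ handled by Lemma \ref{lemmasobolev} and Hölder.

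This yields the bound \eqref{N'maggiore} on a sequence in $\Xi$, or equivalently shows that the \emph{negative} part of $\mathcal N'$ satisfies it. I would actually organize the argument as follows: on $\Xi$ use the above estimate, and off $\Xi$ the inequality $\mathcal N'\ge0\ge -C r^{-1+\varepsilon}(\mathcal N+\frac{n-2s}2)$ is trivial. Thus the final combination holds a.e. with the $O(1)$ factor times $r\cdot r^{-1}=O(1)$, which is within $O(r^{-1+\varepsilon})$ because $\widetilde{\mathbf b}$ carries the factor $r$ relative to scaling.
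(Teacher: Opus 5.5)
Your architecture matches the paper's: the trivial case off $\Xi$, then on $\Xi$ Cauchy--Schwarz for the principal part, with the drift boundary term as the only delicate piece. Writing $E$ as a boundary integral via \eqref{identity} is a clean way to get the cross term. However, your treatment of the drift term has a genuine gap.

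The normal-component step fails because the Cauchy--Schwarz difference has no slack. Set $D(r):=2r^{2s-n}\int_{\partial^+B_r^+}t^{1-2s}\frac{(A\nabla w\cdot\nu)^2}{\mu}\,dS$. The principal part of $E'H-EH'$ is $DH-\frac2rE^2\ge0$, and this vanishes whenever $A\nabla w\cdot\nu$ and $\mu w$ are proportional on $\partial^+B_r^+$. That happens for homogeneous profiles, i.e.\ exactly in the blow-up regime. Keeping only $(1-\eta)DH$ therefore leaves an error $-\eta D/H$ in $\mathcal N'$, which is the very quantity you wanted to absorb. This error is of order $r^{-1}$ (for a profile homogeneous of degree $m$, $D/H=2m^2/r$), so for fixed $\eta$ it is not $O(r^{-1+\varepsilon})(\mathcal N+\frac{n-2s}2)$. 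Controlling $D$ at all requires the $\Xi$-argument anyway. So the normal/tangential split buys nothing, and the whole term $r^{2s-n}\int_{\partial^+B_r^+}t^{1-2s}(\widetilde{\mathbf b}\cdot\nabla_x w)w\,dS$ must be handled through $\Xi$.

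There your step is circular. The condition $r\in\Xi$ means $E'H\le EH'$, but $E'$ in \eqref{E'} contains the drift term you are trying to bound, so no bound on $D$ can be read off directly. The missing idea is the paper's absorption argument:
\begin{enumerate}
\item For all small $r$, combine \eqref{useful} with \eqref{merg1}--\eqref{merg3}, and bound the $S_r$-trace term by H\"older and Lemma \ref{lemmasobolev}; it is absorbable thanks to the factor $r^{(2sp-n)/p}$. This gives $r^{2s-n+1}\int_{\partial^+B_r^+}t^{1-2s}A\nabla w\cdot\nabla w\,dS\le 2rD+O(1)(E+\frac{n-2s}2H)$.
\item Hence the drift term is $O(\sqrt{rDH})+O(1)(E+\frac{n-2s}2H)$.
\item Inserting this into \eqref{E'} and using Young yields $D\le 2E'+O(r^{-1+\varepsilon})(E+\frac{n-2s}2H)$.
\item Only now do $E'H\le EH'$ and \eqref{relazioneH'E} give $rDH=O(1)(E+\frac{n-2s}2H)^2$ on $\Xi$.
\item Therefore the drift term is $O(1)(E+\frac{n-2s}2H)$, and your computation of $\mathcal N'$ closes.
\end{enumerate}

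This yields $rD\lesssim(\mathcal N+\frac{n-2s}2)(E+\frac{n-2s}2H)$, not $rD\lesssim E+\frac{n-2s}2H$ as you claim. Your stronger bound would presuppose a bound on $\mathcal N$, which is what this lemma is used to prove. The weaker bound suffices because only $\sqrt{rDH}$ enters.
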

\begin{proof} 
  We observe that \eqref{N'maggiore} is trivially satisfied a.e. in
  $(0,r_0)\setminus\Xi$, since $\mathcal{N}'\geq0$ there.  Hence, we
  just need to prove \eqref{N'maggiore} a.e. in $\Xi$.  To this aim,
  we preliminarily show that
\begin{equation}\label{E'nuova}
  E'(r) = 2r^{2s-n} \int_{\partial^+ B^+_r}t^{1-2s}
  \frac{(A\nabla w\cdot\nu)^2}{\mu}\,dS +
  O(r^{-1+\varepsilon})\left(E(r)+\frac{n-2s}{2}H(r)\right)
\end{equation}
as $r\to 0^+$, $r\in \Xi$.
In light of \eqref{E'}, proving \eqref{E'nuova} reduces to showing that
\begin{equation}\label{claimdrift}
  r^{2s-n}\int_{\partial^+ B^+_r}t^{1-2s} \big(
  (\widetilde{\mathbf{b}}\cdot\nabla_x w)w+\tilde{a} w^2\big)\,dS
  =O(1) \left(E(r)+\frac{n-2s}{2}H(r)\right)
\end{equation}
as $r\to 0^+$, $r\in \Xi$. We define
\begin{equation*}
  D(r):= 2r^{2s-n} \int_{\partial^+ B^+_r} t^{1-2s}
  \frac{(A\nabla w\cdot\nu)^2}{\mu}\,dS,
    \end{equation*}
    so that, by \eqref{useful},  
    \begin{align*}
         r^{2s-n+1} &\int_{\partial^+ B^+_r} t^{1-2s} A\nabla w\cdot\nabla w\,dS \\
          =& r^{2s-n+1}\int_{S_r} \tilde{ h} |\mathop{\rm Tr}w|^2\,dS'
          +rD(r)-2r^{2s-n}\int_{B^+_r} t^{1-2s} (\beta\cdot\nabla w)
          (\widetilde{\mathbf{b}}\cdot\nabla_x w +\tilde{a}w)\,dz\\
          &+(n-2s) r^{2s-n}\int_{B^+_r}t^{1-2s}A\nabla w\cdot \nabla
          w\, dz + O(r^{\frac{2sp-n}{p}+2s-n})\left(\int_{B'_r}
            |\mathop{\rm Tr}w|^{2^\ast_s}dx\right)^{\frac{2}{2^\ast_s}}\\
          &+ O(r^{2s-n+1}) \int_{B^+_r} t^{1-2s}|\nabla w|^2\,dz
    \end{align*}
    as $r\to0^+$.  In view of \eqref{normadiAlimitata}, \eqref{merg1},
    \eqref{merg2} and \eqref{merg3}, this implies 
\begin{multline}\label{assembling1}
  r^{2s-n+1} \int_{\partial^+ B^+_r} t^{1-2s} A\nabla w\cdot\nabla
  w\,dS \\ = r^{2s-n+1}\int_{S_r} \tilde{ h} |\mathop{\rm
    Tr}w|^2\,dS'+rD(r) + O(1) \left(E(r)+\frac{n-2s}{2}H(r)\right),
\end{multline}
as $r\to0^+$.
By a scaling argument and classical trace theorems, we have that, for every $r\in(0,r_0)$,
  \begin{equation*}
    \|\tilde h\|_{L^p(S_r)}\leq \mathop{\rm const}r^{-1/p}\|\tilde h\|_{W^{1,p}(B'_{r_0})}
  \end{equation*}
  for some $\mathop{\rm const}>0$
  independent of $r$. Hence, by the H\"{o}lder inequality,
\eqref{eq:stgr}, and Lemma \ref{lemmasobolev}, we have, for
a.e. $r\in(0,r_0)$, 
\begin{align}\label{assembling2}
 &\quad r^{2s-n+1}\int_{S_r} \tilde{ h} |\mathop{\rm Tr}w|^2\,dS' \\
 \notag &\leq  \big(n\omega_{n}\big)^{\frac{2sp-n}{np}} r^{2s-n+1+
         \frac{(2sp-n)(n-1)}{pn}} \Vert \tilde{h}\Vert_{L^p(S_r)}
         \left(\int_{S_r}|\mathop{\rm Tr}w|^{2^\ast_s}\,ds'\right)^{\frac{2}{2^\ast_s}}\\
  \notag    &  \leq  \big(n\omega_{n}\big)^{\frac{2sp-n}{np}}
               \bar{S}_{n,s}r^{2s-n+1
               +\frac{(2sp-n)(n-1)}{pn}+\frac{2s}{n}}\Vert
               \tilde{h}\Vert_{L^p(S_r)}
               \int_{\partial^+ B^+_r} t^{1-2s}(r^{-2}|w|^2+ |\nabla w|^2)\,dS\\
  \notag  &     \leq  \,\mathop
  {\rm const}\bigg(r^{\frac{2sp-n}{p}}
               H(r)+
               r^{\frac{2sp-n}{p}+2s-n+1}
               \int_{\partial^+ B^+_r}t^{1-2s} A\nabla w\cdot\nabla w\,dS\bigg),
\end{align}
for some $\mathrm{const}>0$, where we also  used
\eqref{1sumulimitata} and  \eqref{Amaggiore}, being $\omega_{n}$ the
$n$-dimensional Lebesgue measure of $B_1'$. Assembling
\eqref{assembling1} with \eqref{assembling2}, and if necessary
choosing $r_0$ smaller from the beginning in order to ensure that
\begin{equation*}
  1-\mathrm{const}\, r^{\frac{2sp-n}{p}}\geq \frac{1}{2}\quad \text{for every $r\in (0,r_0)$},
\end{equation*}
we obtain, for a.e. $r\in(0,r_0)$,
\begin{equation}\label{stimaterminedibordoconA}
  \frac{r^{2s-n+1}}{2}\int_{\partial^+ B^+_r}t^{1-2s}A\nabla w\cdot\nabla w\,dS
  \leq rD(r) + O(1)\left(E(r)+\frac{n-2s}{2}H(r)\right)
\end{equation}
as $r\to 0^+$, where also estimate \eqref{eq:stH-E+H} was used.
From \eqref{nuovaregolarita}, \eqref{normadiAlimitata},
\eqref{1sumulimitata}, \eqref{eq:stH-E+H}, and 
\eqref{stimaterminedibordoconA}, it

therefore follows that
    \begin{align}\label{terminedidrift}
      r^{2s-n}&\int_{\partial^+ B^+_r}t^{1-2s} \big(
                (\widetilde{\mathbf{b}}\cdot\nabla_x w)w+\tilde{a}(x) w^2\big)\,dS \\
      \notag  = & \,O(1)\left(r^{2s-n+1}\int_{\partial^+B^+_r}
                  t^{1-2s}|\nabla w|^2\,dS\right)^{\frac{1}{2}}\sqrt{H(r)}+O(r) H(r)\\
      \notag = &\,O(1)\left(r^{2s-n+1}\int_{\partial^+B^+_r} t^{1-2s}
                 A\nabla w\cdot\nabla w\,dS\right)^{\frac{1}{2}}\sqrt{H(r)}+O(r) H(r)\\
    \notag    = &
                  \,O(1)\left(\sqrt{rD(r)H(r)}+E(r)+\frac{n-2s}{2}H(r)\right)
    \end{align}
    as $r\to0^+$.  We insert estimate \eqref{terminedidrift} into
    \eqref{E'}, thus obtaining, also in view of \eqref{eq:stH-E+H},
\begin{align*}
  D(r)
  &=E'(r) +O(1)\sqrt{rD(r)H(r)}+O(r^{-1+\varepsilon})\left(E(r)+\frac{n-2s}{2}H(r)\right)\\
  &\leq \, E'(r) +\frac{D(r)}{2}+O(r^{-1+\varepsilon})\left(E(r)+\frac{n-2s}{2}H(r)\right),
\end{align*}
so that
\begin{equation}\label{Dminore}
    D(r)\leq 2E'(r) + O(r^{-1+\varepsilon})\left(E(r)+\frac{n-2s}{2}H(r)\right)
\end{equation}
as $r\to0^+$. Since $E'(r)H(r)\leq E(r)H'(r)$ for every
$r\in\Xi$, from \eqref{eq:stH-E+H}, \eqref{relazioneH'E}, and
\eqref{Dminore}, it follows that, as $r\to 0^+$, $r\in \Xi$,
\begin{align*}
  rD(r)H(r) \leq& \,2r E'(r)H(r) + O(r^{\varepsilon})
                  \left(E(r)+\frac{n-2s}{2}H(r)\right)^2\\
  \leq &\, 2r E(r)H'(r)+ O(r^{\varepsilon}) \left(E(r)+\frac{n-2s}{2}
         H(r)\right)^2\\
  =&\, 2r E(r)\left(\frac{2}{r}E(r)+O(H(r))\right)+ O(r^{\varepsilon})
     \left(E(r)+\frac{n-2s}{2}H(r)\right)^2\\
  =& \,4 E(r)^2 + O(r)E(r)H(r)+ O(r^{\varepsilon}) \left(E(r)+\frac{n-2s}{2}H(r)\right)^2\\
  = &\, O(1)\left(E(r)+\frac{n-2s}{2}H(r)\right)^2
\end{align*}
which, in view of \eqref{eq:stH-E+H}, yields 
\begin{equation*}
  \sqrt{rD(r)H(r)} =O(1)\left(E(r)+\frac{n-2s}{2}H(r)\right)
  \quad \text{as $r\to 0^+$, $r\in \Xi$}.
\end{equation*}
Substituting this into \eqref{terminedidrift} yields the desired
estimate \eqref{claimdrift} and, consequently, \eqref{E'nuova}.

We are now ready to prove \eqref{N'maggiore}.  By direct computations
and thanks to \eqref{H}, \eqref{eqH'}, \eqref{relazioneH'E},
\eqref{N}, and \eqref{E'nuova}, we have, as $r\to 0^+$, $r\in \Xi$,
    \begin{multline}\label{N'prima}
        \mathcal{N}'(r)= \frac{E'(r)H(r)-\frac{r}{2}(H'(r))^2+O(r) H(r)H'(r)}{H^2(r)}\\
        = 2r^{4s-2n-1}\frac{\left(\int_{\partial^+ B^+_r} t^{1-2s}
            \frac{(A\nabla
              w\cdot\nu)^2}{\mu}\,dS\right)\left(\int_{\partial^+
              B^+_r}
            t^{1-2s} \mu w^2\,dS\right)-\left(\int_{\partial^+ B^+_r}
            t^{1-2s}(A\nabla w\cdot\nu)w\,dS\right)^2}{H^2(r)}\\
        +O(r) + \frac{O(r^{2s-n})}{H(r)}\int_{\partial^+ B^+_r}
        t^{1-2s}
        (A\nabla w\cdot \nu)w\,dS + O(r^{-1+\varepsilon}) \left(\mathcal{N}(r)+\frac{n-2s}{2}\right).
      \end{multline}
   The Cauchy-Schwarz inequality applied to vectors in $L^2(\partial^+
   B^+_r, t^{1-2s}dS)$ implies that
    \begin{multline}\label{C-S}
      \left(\int_{\partial^+ B^+_r} t^{1-2s}\frac{(A\nabla
          w\cdot\nu)^2}{\mu}\,dS\right)
      \left(\int_{\partial^+ B^+_r} t^{1-2s} \mu w^2\,dS\right)\\
      -\left(\int_{\partial^+ B^+_r} t^{1-2s}(A\nabla
        w\cdot\nu)w\,dS\right)^2 \geq 0
      \end{multline}
      for a.e. $r\in (0,r_0)$; furthermore, as a consequence of
      \eqref{eqH'} and \eqref{relazioneH'E}, we have
    \begin{equation}\label{necess}
        \frac{r^{2s-n}}{H(r)}\int_{\partial^+ B^+_r} t^{1-2s}(A\nabla w\cdot \nu)w\,dS= \mathcal{N}(r) + O(r)\quad \text{as $r\to 0^+$}. 
    \end{equation} 
    Combining \eqref{N'prima} with \eqref{C-S} and \eqref{necess}, and
    recalling \eqref{Nmaggiore}, we finally obtain
    \eqref{N'maggiore}.~\end{proof}

\begin{lemma}\label{l:twocases}
There exists a positive constant $C_3>0$ such that 
    \begin{equation}\label{Nmindicost}
        \mathcal{N}(r) \leq C_3\quad \text{for every $r\in (0,r_0)$}.
    \end{equation}
    Moreover,
    \begin{equation}\label{Nammettelimite}
      \text{the limit of
        $\mathcal{N}$ as $r\to 0^+$ exists and is finite}.
  \end{equation}
\end{lemma}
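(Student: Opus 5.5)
We split into the two cases suggested by the definition of $\Xi$: either $0$ is not a limit point of $\Xi$, or it is.

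\emph{Case 1: $0$ is not a limit point of $\Xi$.} Then there is $\bar r\in(0,r_0)$ with $\mathcal N'(r)>0$ for a.e.\ $r\in(0,\bar r)$. Since $\mathcal N\in W^{1,1}_{\mathrm{loc}}(0,r_0]$, $\mathcal N$ is absolutely continuous on compact subintervals of $(0,r_0]$, hence nondecreasing on $(0,\bar r)$. It follows that $\lim_{r\to0^+}\mathcal N(r)$ exists, and it is finite because $\mathcal N$ is bounded from below by \eqref{Nmaggiore}. Monotonicity gives $\mathcal N(r)\le \mathcal N(\bar r)$ on $(0,\bar r)$. On $[\bar r,r_0)$, $\mathcal N$ is continuous, and it is bounded there provided $E$ and $H$ are bounded near $r_0$ and $H$ stays away from zero. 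This holds since $H$ is continuous and positive on $(0,r_0]$, after shrinking $r_0$ slightly if necessary. Combining the two ranges gives \eqref{Nmindicost}.

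\emph{Case 2: $0$ is a limit point of $\Xi$.} Lemma \ref{lemmaN'} applies: there are $C>0$ and $r_1\in(0,r_0)$ with
\begin{equation*}
  \Big(\mathcal N(r)+\tfrac{n-2s}{2}\Big)'\ge -C r^{-1+\varepsilon}\Big(\mathcal N(r)+\tfrac{n-2s}{2}\Big)
  \quad\text{for a.e. } r\in(0,r_1).
\end{equation*}
Since $\mathcal N+\frac{n-2s}{2}>0$ by \eqref{Nmaggiore}, we may divide by it and integrate; equivalently, we check that
\begin{equation*}
  r\mapsto e^{\frac{C}{\varepsilon}r^{\varepsilon}}\Big(\mathcal N(r)+\tfrac{n-2s}{2}\Big)
\end{equation*}
has nonnegative a.e.\ derivative and is therefore nondecreasing on $(0,r_1)$. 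Consequently this function has a limit as $r\to0^+$, which lies in $[0,\infty)$ because of positivity and monotonicity. As $e^{\frac{C}{\varepsilon}r^{\varepsilon}}\to1$, $\lim_{r\to0^+}\mathcal N(r)$ exists and is finite, which is \eqref{Nammettelimite}. Boundedness on $(0,r_1)$ follows from
\begin{equation*}
  \mathcal N(r)+\tfrac{n-2s}{2}\le e^{\frac{C}{\varepsilon}r_1^{\varepsilon}}\Big(\mathcal N(r_1)+\tfrac{n-2s}{2}\Big),
\end{equation*}
and boundedness on $[r_1,r_0)$ follows from continuity as in Case 1. This gives \eqref{Nmindicost}.

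The main subtlety is the legitimacy of the integration steps. We need that the $O(r^{-1+\varepsilon})$ term in Lemma \ref{lemmaN'} holds uniformly a.e.\ for $r$ near $0$, so that it yields a single constant $C$. We also need that the product of the $W^{1,1}_{\mathrm{loc}}$ function $\mathcal N$ with a smooth function is again $W^{1,1}_{\mathrm{loc}}$, with derivative computed by the product rule a.e. Both points are standard. One further point is that $\varepsilon>0$ makes $r^{-1+\varepsilon}$ integrable at $0$, and this is exactly what keeps the exponential factor finite.
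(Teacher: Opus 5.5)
Your proof is correct and follows the paper's argument: the paper makes the same case distinction on whether $0$ is a limit point of $\Xi$. In Case 1 it uses monotonicity together with the lower bound \eqref{Nmaggiore}; in Case 2 it uses Lemma \ref{lemmaN'} to show that $r\mapsto e^{Cr^{\varepsilon}/\varepsilon}\big(\mathcal N(r)+\frac{n-2s}{2}\big)$ is nondecreasing and positive, and in both cases continuity of $\mathcal N$ on $(0,r_0]$ handles the range away from $0$.
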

\begin{proof}
  To establish \eqref{Nmindicost} and \eqref{Nammettelimite}, we
  distinguish two cases.

  \emph{Case 1.} If $0$ is not a limit point of $\Xi$ then
  $\mathcal{N}'(r)\geq 0$ in a right neighbourhood of 0, where
    $\mathcal N$ is therefore increasing and bounded from above. The
    continuity of $\mathcal N$ in $(0,r_0]$ then implies its boundedness
    on the whole interval $(0,r_0)$, thus yielding
    \eqref{Nmindicost}.  Moreover, the monotonicity of $\mathcal{N}$
  in a right neighbourhood of $0$ and \eqref{Nmaggiore} imply
  \eqref{Nammettelimite}.

  \emph{Case 2.} If $0$ is a limit point of
  $\Xi$, then we are under assumptions of Lemma \ref{lemmaN'} and thus
    \begin{equation}\label{dividoN}
      \left(\mathcal{N}(\tau)+\frac{n-2s}{2}\right)'
      \geq - c \tau^{-1+\varepsilon}\left(\mathcal{N}(\tau)+\frac{n-2s}{2}\right)
    \end{equation}
    for some positive constant $c>0$ and for every $\tau$ in a
      sufficiently small right neighbourhood of $0$. Estimate 
      \eqref{dividoN}
      directly implies that the function
  \begin{equation*}
        r\mapsto e^{\frac{cr^\e}\e}\left(\mathcal N(r)+\frac{n-2s}2\right)
      \end{equation*}
      is increasing in a right neighbourhood of 0. Since it is
      positive by \eqref{Nmaggiore}, it has a finite limit as $r\to0^+$. This directly
      implies \eqref{Nammettelimite}; hence, by the continuity of
      $\mathcal N$ in $(0,r_0]$, \eqref{Nmindicost} also holds.
\end{proof}

Henceforth, we adopt the following notation:
\begin{equation}\label{zeta}
   \gamma:= \lim_{r\to 0^+}\mathcal{N}(r). 
\end{equation}
We observe that
  \begin{equation}
    \label{eq:gamma-ge}
    \gamma\geq -\frac{n-2s}2
  \end{equation}
  in view of \eqref{Nmaggiore}.

As a consequence of \eqref{Nmindicost} and the relation between $H'$
and $E$ provided in Lemma \ref{lemmasuH}, the following results
follow.

\begin{lemma}
    There exists $K_1>0$ such that 
    \begin{equation}\label{Hminore}
        H(r)\leq K_1 r^{2\gamma}\quad \text{for all $r\in (0,r_0)$}.
    \end{equation}
    For every fixed $\delta>0$ there exists
    $K_2=K_2(\delta)>0$ such that
    \begin{equation}\label{Hmag}
        H(r)\geq K_2 r^{2\gamma+\delta}\quad \text{for all $r\in (0,r_0)$}.
    \end{equation}
\end{lemma}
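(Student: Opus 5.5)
The plan is to integrate the logarithmic derivative of $H$. By \eqref{relazioneH'E} and the definition \eqref{N} of $\mathcal N$, for a.e. $r\in(0,r_0)$ we have
\begin{equation*}
\frac{H'(r)}{H(r)}=\frac{2\mathcal N(r)}{r}+O(1)\quad\text{as }r\to0^+ .
\end{equation*}
Since $H>0$ (Lemma \ref{lemmasuH}(i)) and $H\in W^{1,1}_{\rm loc}(0,r_0]$, the function $\log H$ is absolutely continuous on compact subintervals of $(0,r_0]$. Thus both estimates reduce to comparing $\mathcal N(r)$ with $\gamma$.

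\emph{Upper bound \eqref{Hminore}.} The key point is that $\mathcal N(r)\geq\gamma-O(r^{\varepsilon})$ near $0$. Let $\varepsilon$ be as in \eqref{vareps}, and treat the two cases of the proof of Lemma \ref{l:twocases} separately.
\begin{itemize}
\item In Case 1, $\mathcal N$ is nondecreasing near $0$, so $\mathcal N(r)\geq\gamma$ directly.
\item In Case 2, the function $r\mapsto e^{cr^\varepsilon/\varepsilon}\big(\mathcal N(r)+\frac{n-2s}2\big)$ is increasing. Hence it is at least its limit $\gamma+\frac{n-2s}2$, which gives
\begin{equation*}
\mathcal N(r)+\tfrac{n-2s}2\geq e^{-cr^\varepsilon/\varepsilon}\big(\gamma+\tfrac{n-2s}2\big)\geq\gamma+\tfrac{n-2s}2-C r^{\varepsilon},
\end{equation*}
using \eqref{eq:gamma-ge} and the boundedness of $\gamma$.
\end{itemize}
In both cases, for $r$ small,
\begin{equation*}
\frac{H'}{H}\geq \frac{2\gamma}{r}-2Cr^{-1+\varepsilon}-C'.
\end{equation*}
Integrate this over $(r,\bar r)$ for a fixed small $\bar r$. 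Since $r^{-1+\varepsilon}$ is integrable at $0$, we get $\log H(\bar r)-\log H(r)\geq 2\gamma\log(\bar r/r)-C''$. This yields $H(r)\leq K r^{2\gamma}$ on $(0,\bar r)$. On $[\bar r,r_0)$, $H$ is continuous and bounded, and $r^{2\gamma}$ is bounded below there, so \eqref{Hminore} holds on all of $(0,r_0)$ after enlarging $K_1$.

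\emph{Lower bound \eqref{Hmag}.} Fix $\delta>0$. Since $\mathcal N(r)\to\gamma$, there is $r_\delta$ such that $\mathcal N(r)\leq\gamma+\delta/4$ for $r<r_\delta$. Then $H'/H\leq (2\gamma+\delta/2)/r+C'\leq(2\gamma+\delta)/r$ for $r$ small. Integrating on $(r,r_\delta)$ gives $H(r)\geq H(r_\delta)(r/r_\delta)^{2\gamma+\delta}$. On $[r_\delta,r_0)$, $H$ is continuous and positive on the compact closure, since $H(r_0)$ makes sense via $W^{1,1}_{\rm loc}(0,r_0]$; so it is bounded below there, which gives $K_2(\delta)$.

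The only step needing care is the Case 2 lower bound on $\mathcal N$. There one must use the integrable $O(r^{-1+\varepsilon})$ rate from Lemma \ref{lemmaN'}, rather than mere convergence of $\mathcal N$, to obtain the sharp exponent $2\gamma$ in \eqref{Hminore} with no $\delta$-loss.
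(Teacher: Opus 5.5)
Your proposal is correct and is essentially the paper's proof. The paper also derives $\mathcal N(\rho)\geq\gamma-C\rho^{\varepsilon}$ near $0$, by integrating \eqref{dividoN} against the bound $C_3$; this is equivalent to your use of the increasing quantity $e^{cr^\varepsilon/\varepsilon}(\mathcal N+\frac{n-2s}{2})$. It then inserts this into $H'/H=2\mathcal N/r+O(1)$ and integrates, and it proves \eqref{Hmag} exactly as you do.

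One point is left implicit in both arguments: $H$ must be bounded away from zero on $[r_\delta,r_0)$, i.e.\ $H(r_0)>0$. The existence of $H(r_0)$ via $W^{1,1}_{\rm loc}$ alone does not give this. It follows by rerunning the argument of Lemma \ref{lemmasuH}-(i) at $r_0$, or simply by shrinking $r_0$.
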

\begin{proof}
  To prove \eqref{Hminore}, we first observe that there exists some
  $r_1\in(0,r_0)$ such that, for all
  $\rho\in (0,r_1)$,
    \begin{equation*}
      \mathcal{N}(\rho)-\gamma=
      \int_0^\rho \mathcal{N}'(\tau)\,d\tau \geq -
      c \left(C_3+\frac{n-2s}{2}\right) \frac{\rho^{\varepsilon}}{\varepsilon},
    \end{equation*}
    where we have used $\mathcal{N}\in W^{1,1}_{\mathrm{loc}}(0,r_0]$
    along with \eqref{Nammettelimite} and \eqref{zeta} to obtain the
    first identity, and \eqref{Nmindicost} and \eqref{dividoN} to
    derive the second inequality (we note that \eqref{dividoN}
      trivially holds also in Case 1 of Lemma \ref{l:twocases}).
      Combined with \eqref{relazioneH'E} and the strict positivity of
    $H$ (Lemma \ref{lemmasuH}-(i)), this yields
    \begin{equation*}
      \frac{H'(\rho)}{H(\rho)}= \frac{2\mathcal{N}(\rho)}{\rho}
      +O(1) \geq 2\gamma\rho^{-1}-2c
      \left(C_3+\frac{n-2s}{2}\right) \frac{\rho^{-1+\varepsilon}}{\varepsilon}-c_1
    \end{equation*}
    for some $c_1>0$ and for all $\rho\in (0,r_1)$ (possibly
      after taking a smaller $r_1$).
     Integrating the above estimate
      between $r$ and $r_1$ and recalling that $H$ is continuous on
      $(0,r_0]$, we directly obtain \eqref{Hminore}.

    To prove \eqref{Hmag}, we note from \eqref{Nammettelimite} and
    \eqref{zeta} that, for each fixed $\delta>0$,
    \begin{equation*}
      \mathcal{N}(\rho) -\gamma\leq \frac{\delta}{2}\quad
      \text{for all $\rho\in (0,\rho_\delta)$},
    \end{equation*}
    for some $\rho_\delta>0$. In view of \eqref{relazioneH'E} and
    the strict positivity of $H$ (Lemma \ref{lemmasuH}-(i)), this
    implies that there exist
    some $c_2>0$ and $r_2\leq \rho_\delta$ such that
    \begin{equation*}
      \frac{H'(\rho)}{H(\rho)}=
      \frac{2\mathcal{N}(\rho)}{\rho}+O(1) \leq \frac{2\gamma+\delta}{\rho}+c_2
    \end{equation*}
    for all $\rho\in (0,r_2)$.
    An integration with respect to
      $\rho$ of the above inequality over $(r,r_2)$ and the continuity
      of $H$ yield \eqref{Hmag}.
\end{proof}

\begin{lemma}\label{limiteHfinito}
    The limit $\lim_{r\to 0^+}H(r)r^{-2\gamma}$ exists and is finite. 
\end{lemma}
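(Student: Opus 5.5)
Since $H(r)r^{-2\gamma}$ is bounded above by \eqref{Hminore} and positive by Lemma \ref{lemmasuH}-(i), the natural strategy is to show that $\frac{d}{dr}\big(H(r)r^{-2\gamma}\big)$ equals a nonnegative function plus a term integrable near $0$. Then $r\mapsto H(r)r^{-2\gamma}$ minus a convergent integral is monotone, hence has a limit, which is finite by \eqref{Hminore}.

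\emph{Step 1: the logarithmic derivative.} By \eqref{relazioneH'E}, for a.e.\ $r\in(0,r_0)$,
\begin{equation*}
\frac{d}{dr}\,\frac{H(r)}{r^{2\gamma}}=r^{-2\gamma}\Big(H'(r)-\frac{2\gamma}{r}H(r)\Big)
=\frac{2H(r)}{r^{2\gamma+1}}\big(\mathcal N(r)-\gamma\big)+O(1)\,\frac{H(r)}{r^{2\gamma}} .
\end{equation*}
The last term is $O(1)$ by \eqref{Hminore}, so it is integrable on $(0,r_0)$.

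\emph{Step 2: splitting $\mathcal N-\gamma$.} As in the proof of \eqref{Hminore}, \eqref{dividoN} together with \eqref{Nmindicost} gives
\begin{equation*}
\mathcal N(r)-\gamma=\int_0^r\mathcal N'(\tau)\,d\tau\geq -c'r^{\varepsilon}\quad\text{for $r$ small}.
\end{equation*}
Hence $\mathcal N(r)-\gamma=\big(\mathcal N(r)-\gamma+c'r^\varepsilon\big)-c'r^{\varepsilon}$, where the bracket is nonnegative. The contribution of the nonnegative part to the derivative is nonnegative. The remaining part is $-2c'r^{\varepsilon-1}H(r)r^{-2\gamma}=O(r^{-1+\varepsilon})$, again by \eqref{Hminore}, and this is integrable near $0$.

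\emph{Step 3: conclusion.} On some $(0,r_1)$ we therefore have $\frac{d}{dr}(Hr^{-2\gamma})=f_1+f_2$ with $f_1\geq 0$ and $f_2\in L^1(0,r_1)$. Because $Hr^{-2\gamma}\in W^{1,1}_{\rm loc}(0,r_0]$, the function $r\mapsto H(r)r^{-2\gamma}-\int_0^r f_2$ is nondecreasing on $(0,r_1)$, so it admits a limit as $r\to0^+$. This limit is finite since the function is bounded below by $-\|f_2\|_{L^1}$ and bounded above thanks to \eqref{Hminore}. Consequently $\lim_{r\to0^+}H(r)r^{-2\gamma}$ exists and is finite (and nonnegative).

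The main obstacle is Step 2: we need the one-sided bound $\mathcal N-\gamma\geq -Cr^\varepsilon$, not merely $\mathcal N\to\gamma$. This bound relies on \eqref{dividoN} holding in both cases of Lemma \ref{l:twocases}, which was already noted in the proof of \eqref{Hminore}.
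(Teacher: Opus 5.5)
Your proposal is correct and follows essentially the same route as the paper: the paper's auxiliary function $G(\rho)=\int_0^\rho\big(\mathcal N'(\tau)+c\tau^{-1+\varepsilon}(C_3+\tfrac{n-2s}{2})\big)\,d\tau$ is exactly your nonnegative bracket $\mathcal N(\rho)-\gamma+c'\rho^{\varepsilon}$, with $c'=c\,\varepsilon^{-1}(C_3+\tfrac{n-2s}{2})$. As in your argument, the remaining terms are shown to be integrable near $0$ via \eqref{Hminore}, and the limit then follows by monotonicity.
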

\begin{proof}
  By \eqref{Hminore}, the proof of the lemma reduces to establishing
  the existence of the limit. By \eqref{relazioneH'E} and
  \eqref{N}, for a.e. $\rho\in (0,r_0)$ we have 
    \begin{align*}
      \frac{d}{d\rho}\left(H(\rho)\rho^{-2\gamma}\right)
      &= \,2E(\rho)\rho^{-2\gamma-1}+O(H(\rho))\rho^{-2\gamma}
        -2\gamma H(\rho) \rho^{-2\gamma-1} \\
        &=2H(\rho)\rho^{-2\gamma-1} ( \mathcal{N}(\rho)+O(\rho)-\gamma)\\
        &= 2H(\rho)\rho^{-2\gamma-1} \int_0^\rho
          \left(\mathcal{N}'(\tau)
          + c \tau^{-1+\varepsilon}\left(C_3+\tfrac{n-2s}{2}\right)\right)\,d\tau\\
      &-2cH(\rho)\rho^{-2\gamma-1}\int_0^\rho \tau^{-1+\varepsilon}
        \left(C_3+\tfrac{n-2s}{2}\right)\,d\tau + 2 H(\rho)\rho^{-2\gamma}O(1)
    \end{align*}
    as $\rho\to 0^+$. Letting  $\sigma\in (0,r_0)$, we
      integrate with respect to $\rho$ between $r\in(0,\sigma)$ and
      $\sigma$, thus obtaining
    \begin{multline}\label{precedente}
           \frac{H(\sigma)}{ \sigma^{2\gamma}}-\frac{H(r)}{r^{2\gamma
             }}
        = 2\int_r^{\sigma} 
        H(\rho)\rho^{-2\gamma-1}
G(\rho)\,d\rho\\
        -2c\,\varepsilon^{-1}\left(C_3+\tfrac{n-2s}{2}\right)\int_r^{\sigma}
        H(\rho)\rho^{-2\gamma-1+\varepsilon}\,d\rho+
        2\int_r^{\sigma} H(\rho)\rho^{-2\gamma}O(1)\,d\rho.
      \end{multline}
      where
      $G(\rho):= \int_0^\rho \left(\mathcal{N}'(\tau)+ c
        \tau^{-1+\varepsilon}\left(C_3+\tfrac{n-2s}{2}\right)\right)
      \,d\tau$.  From \eqref{dividoN} and \eqref{Nmindicost} it
      follows that, if $\sigma$ is chosen sufficiently small,
      $G(\rho)\geq0$ for every $\rho\in(0,\sigma)$. Hence the limit as
      $r\to 0^+$ of the first integral on the right-hand side of
      \eqref{precedente} exists. Moreover, in view of \eqref{Hminore},
      the functions $\rho\mapsto H(\rho)\rho^{-2\gamma-1+\varepsilon}$
      and $\rho\mapsto H(\rho)\rho^{-2\gamma}O(1)$ are integrable in
      $(0,\sigma)$, so that both the second and third integrals have
      finite limits as $r\to 0^+$. Hence, by \eqref{precedente}, we
      can conclude that the limit of $H(r)r^{-2\gamma}$ as $r\to 0^+$
      exists, as desired.
\end{proof}

\section{Neumann eigenvalues on the half-sphere under a symmetry condition}\label{sect3}
In this section we are interested in characterizing the eigenvalues of
the spherical eigenvalue problem
\begin{equation}\label{prob-eigenvalues}
\begin{cases}
  -\mathop{\rm{div}_{\mathbb{S}}}(\theta_{n+1}^{1-2s}\nabla_{\mathbb{S}}Y)=
  \mu \,\theta_{n+1}^{1-2s} \,Y &\text{in } \mathbb{S}^+,\\[5pt]
  \lim_{\theta_{n+1} \to 0^+}\theta_{n+1}^{1-2s}\,\nabla_{\mathbb{S}}
  Y\cdot\mathbf e=0
  &\text{on } \mathbb{S}',\\[5pt]
  Y\in H_{\rm even}^1(\mathbb{S}^+,\theta_{n+1}^{1-2s}dS),
\end{cases}	
\end{equation}
where $\mathbf e=(0,\dots, 0,-1)$ is the outer normal vector to
$\mathbb{S}^+$ on $\mathbb{S}'$ and
\begin{equation*}
  H_{\rm even}^1(\mathbb{S}^+,\theta_{n+1}^{1-2s}dS):=\\
  \{\Psi \in
  H^1(\mathbb{S}^+,\theta_{n+1}^{1-2s}dS):\Psi(\theta',\theta_n,\theta_{n+1})
  =\Psi(\theta',-\theta_n,\theta_{n+1})\},
\end{equation*}
with $H^1(\mathbb{S}^+,\theta_{n+1}^{1-2s}dS)$ denoting the space
introduced in Definition \ref{spaziosulbordo} (for $r=1$). It is easy
to verify that $H_{\rm even}^1(\mathbb{S}^+,\theta_{n+1}^{1-2s}dS)$ is
a closed subspace of $ H^1(\mathbb{S}^+,\theta_{n+1}^{1-2s}dS)$.

\begin{definition}\label{defdiautofunzione}
  We say that a function
  $Y \in H_{\rm even}^1(\mathbb{S}^+,\theta_{n+1}^{1-2s}dS)$ is an
  eigenfunction of \eqref{prob-eigenvalues} if $Y\not\equiv0$ and
  there exists $\mu\in\R$ (called eigenvalue) such that, for all
  $\Psi \in H_{\rm even}^1(\mathbb{S}^+,\theta_{n+1}^{1-2s}dS)$,
\begin{equation}\label{eq-egienvlulues}
  \int_{\mathbb{S}^+} \theta_{n+1}^{1-2s}\, \nabla_{\mathbb{S}}Y \cdot
  \nabla_{\mathbb{S}} \Psi \, dS
  = \mu \int_{\mathbb{S}^+} \theta_{n+1}^{1-2s}  Y \Psi \, dS.
\end{equation}
\end{definition}
By classical spectral theory, the eigenvalues of problem
\eqref{prob-eigenvalues} form an increasing sequence of nonnegative
real numbers $\{\mu_m\}_{m\in\mathbb{N}}$ that diverges to
$+\infty$. We explicitly characterize the sequence
$\{\mu_m\}_{m\in\mathbb{N}}$, showing that, for all
$m \in \mathbb{N}$,
\begin{equation}\label{eigenvalues}
\mu_m=
\begin{cases}
m(m+n-2s), &\text{if } n\geq2, \\
2m(2m+n-2s), &\text{if } n=1.
\end{cases}
\end{equation}

\begin{proposition}\label{p:eigenvalues}
  The following statements hold true.
  \begin{itemize}
\item[\rm (i)]   All eigenvalues of problem \eqref{prob-eigenvalues} are given by
  \eqref{eigenvalues}.
\item[\rm (ii)]  If $Y$ is an eigenfunction associated to the
    eigenvalue $\mu_m$, then there exists a homogeneous polynomial
    $P:\R^{n+1}\to\R$ of degree $m$, such that
    \begin{itemize}
      \item[\rm a)] $P$ is even with
    respect to both variables $x_n$ and $t$,
    i.e.
    \begin{equation*}
P(x',x_n,t)=P(x',-x_n,t)= P(x',x_n,-t)
\end{equation*}
for every $(x',x_n,t)\in \R^{n+1}$;
    \item[\rm b)] $P(z)=P(x',x_n,t)=|z|^m Y\big(\tfrac{z}{|z|}\big)$ for
      every $z\in \R^n\times(0,+\infty)$;
    \item[\rm c)] $P$ solves
      \begin{equation}\label{eq:hom}
\mathop{\rm div}(|t|^{1-2s}\nabla P)=0 \quad\text{in }\R^{n+1}.  
      \end{equation}
    \end{itemize}
\end{itemize}
\end{proposition}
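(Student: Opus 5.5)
The plan is to reduce the half-sphere problem to the known spectral problem for the operator $\mathop{\rm div}(|t|^{1-2s}\nabla\cdot)$ on the whole sphere, and then impose the extra symmetry in $x_n$. Take an eigenfunction $Y$ of \eqref{prob-eigenvalues} with eigenvalue $\mu$. Extend it evenly across $\{\theta_{n+1}=0\}$ to a function $\bar Y$ on $\mathbb S$. Because of the homogeneous weighted Neumann condition on $\mathbb S'$, $\bar Y$ is a weak eigenfunction of $-\mathop{\rm div}_{\mathbb S}(|\theta_{n+1}|^{1-2s}\nabla_{\mathbb S}\cdot)$ on the whole sphere. The test functions are obtained by splitting into even and odd parts in $\theta_{n+1}$; the odd part integrates to zero against the even $\bar Y$. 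The same argument, with even/odd decomposition in $\theta_n$, shows that restricting to $H^1_{\rm even}$ loses nothing: $Y$ tests correctly against all of $H^1(\mathbb S^+,\theta_{n+1}^{1-2s}dS)$.

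Next set $W(z)=|z|^{\kappa}\bar Y(z/|z|)$, where $\kappa\ge0$ solves $\kappa(\kappa+n-2s)=\mu$. The usual polar computation for the weight $|t|^{1-2s}$, which is homogeneous of degree $1-2s$, gives $\mathop{\rm div}(|t|^{1-2s}\nabla W)=0$ in $\R^{n+1}\setminus\{0\}$. Since $n-2s+... >0$, the point $0$ is removable: $W\in H^1_{\rm loc}(\R^{n+1},|t|^{1-2s}dz)$ because $\kappa\ge0$ and $n+1-2s>2$ fails only trivially. More precisely, the capacity of a point for this $A_2$ weight is zero since $n+1+(1-2s)>2$, so the equation holds on all of $\R^{n+1}$.

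Now invoke the regularity result \cite[Theorem 1.1]{SirTerVit21a}, cited in the introduction, for solutions even in $t$. It gives that $W$ is smooth, or at least $C^\infty$ in $x$ and smooth as a function of $(x,t^2)$. A homogeneous function of degree $\kappa$ that is smooth at the origin must be a homogeneous polynomial: all Taylor coefficients of order different from $\kappa$ vanish by scaling. Hence $\kappa=m\in\mathbb N$ and $P:=W$ is a homogeneous polynomial of degree $m$ solving \eqref{eq:hom}. It is even in $t$ by construction and even in $x_n$ because $Y\in H^1_{\rm even}$, which gives (ii) a)--c). Applying the same reasoning to all eigenfunctions shows every eigenvalue has the form $m(m+n-2s)$.

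For (i), the converse is needed: each such value must actually occur, and with the stated restriction when $n=1$. For $n\ge2$, $P=x_1^m+\dots$ is not the right choice; instead take the polynomial solution obtained from any harmonic-type ansatz depending only on $(x_1,t)$. Concretely, $P(x_1,t)=\sum_k c_k x_1^{m-2k}t^{2k}$ with the recursion forced by $\partial_{x_1}^2P+t^{2s-1}\partial_t(t^{1-2s}\partial_tP)=0$, namely $c_{k}(2k)(2k-2s)=-c_{k-1}(m-2k+2)(m-2k+1)$. This solution has degree $m$, is even in $t$, and is even in $x_n$ since it does not involve $x_n$ (here $n\ge2$), so $\mu=m(m+n-2s)$ is attained. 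For $n=1$ the only spatial variable is $x_n=x_1$, so evenness in $x_1$ and $t$ forces $P$ to have even degree. Conversely, the recursion with $m=2j$ produces a nonzero even polynomial. This gives $\mu_m=2m(2m+n-2s)$.

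The main obstacle is the regularity/polynomial step. One must verify that the hypotheses of \cite{SirTerVit21a} apply to the even extension and yield enough smoothness at the origin to force integrality of $\kappa$. A possible alternative route would be to expand in Gegenbauer-type functions of $\theta_{n+1}$ times spherical harmonics in $x$.
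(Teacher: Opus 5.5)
Your proof is correct and follows the paper's route in substance: the homogeneous extension of $Y$, smoothness from \cite[Theorem 1.1]{SirTerVit21a} forcing an integer degree and a polynomial, the parity obstruction for $n=1$, and explicit even polynomials for existence. The differences are minor: you reflect in $t$ first and read off polynomiality directly from the Taylor expansion instead of citing \cite[Lemma A.1]{DelFelSic23}, and for $n\ge2$ your $(x_1,t)$ recursion polynomial replaces the paper's $t$-independent harmonic polynomial $\mathrm{Re}((x_{n-1}+ix_n)^m)$, which is equally valid.

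Only the removability sentence needs cleaning up, since ``$n+1-2s>2$ fails only trivially'' is meaningless. The right justification is the one you give next: a point has zero weighted capacity because $n+2-2s>2\iff n>2s$, and $2\kappa+n-2s>0$ gives $W\in H^1_{\rm loc}(\R^{n+1},|t|^{1-2s}dz)$.
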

\begin{proof}
We assume that $\mu$ is an eigenvalue of \eqref{prob-eigenvalues}. 
Hence there exists an eigenfunction $Y$ of \eqref{prob-eigenvalues}
associated with $\mu$, that is a nontrivial solution of
\eqref{prob-eigenvalues} in the sense specified in Definition
\ref{defdiautofunzione}.
A straightforward computation shows that this is equivalent to
  the function 
\begin{equation}\label{eq:U}
U(z):=|z|^\gamma Y\left(\frac{z}{|z|}\right), \quad z \in \R^{n+1}, 
\end{equation}
with 
\begin{equation}\label{zetaesplicito}
\gamma:=-\frac{n-2s}{2}+\sqrt{\left(\frac{n-2s}2{}\right)^2 +\mu},
\end{equation}
belonging to $H^1_{\mathrm{loc}}(\overline{\R_+^{n+1}},t^{1-2s}dz)$, being even
with respect to $x_n$, and weakly solving
\begin{equation}\label{prob-div-0}
\begin{cases}
\mathop{\rm div}(t^{1-2s}\nabla U)=0 &\text{in }\R_+^{n+1},\\
\lim_{t \to 0^+}t^{1-2s} \partial_tU=0 &\text{on } \R^n.
\end{cases}	
\end{equation}
Therefore, there exists a nontrivial solution $U$ of
\eqref{prob-div-0}, which is even with respect to $x_n$ and
homogeneous of degree $\gamma$; in addition
$U\in C^{\infty}(\overline{B_1^+})$ as a consequence of the regularity
result in \cite[Theorem 1.1]{SirTerVit21a}.  Then there exists
$m \in \mathbb{N}$ such that $\gamma=m$; thus we have that
$\mu=m(m+n-2s)$ by virtue of \eqref{zetaesplicito}. We notice that the
case $m=0$ is included here since, in that case, $\mu=0$ and 0 is an
eigenvalue with associated constant (and nontrivial) eigenfunctions.

If $n=1$, we may also prove that the odd integer homogeneities
are not allowed; that is, if $m\in \mathbb{N}$, then
$(2m+1)(2m+1+n-2s)$ is not an eigenvalue of \eqref{prob-eigenvalues}.
To this goal, we argue by contradiction assuming that
$(2m+1)(2m+1+n-2s)$ is an eigenvalue of \eqref{prob-eigenvalues}. If
$\Psi$ is an eigenfunction of \eqref{prob-eigenvalues} associated with
$(2m+1)(2m+1+n-2s)$, then the function defined as
\begin{equation*}
U(z)=|z|^\gamma\Psi\left(\frac{z}{|z|}\right), \quad z=(x,t)\in \mathbb{R}^2_+,
\end{equation*}
with 
\begin{equation*}
\gamma=-\frac{n-2s}{2}+\sqrt{\left(\frac{n-2s}{2}\right)^2+(2m+1)(2m+1+n-2s)}=2m+1
\end{equation*}
is a nontrivial solution to \eqref{prob-div-0}, even with respect to
$x$. Hence, the even reflection $\widetilde{U}$ of $U$ with respect to
$t$, given by $\widetilde{U}(x,t):=U(x,|t|)$, is a solution of
$\mathop{\rm div}(|t|^{1-2s}\nabla \widetilde{U})=0$ in
$\mathbb{R}^{2}$, and in addition
$\widetilde{U}\in C^\infty(\mathbb{R}^{2})$ thanks to \cite[Theorem
1.1]{SirTerVit21a}. Moreover, from \cite[Lemma A.1]{DelFelSic23} it
follows that $\widetilde{U} $ is a homogeneous polynomial of degree
$2m+1$, namely
\begin{equation*}
\widetilde{U}(x,t)= \sum_{k=0}^{2m+1}a_k x^{k} t^{2m+1-k}.
\end{equation*}
We have that $a_k=0$ if $k$ is odd since $\widetilde{U}$ is even with
respect to $x$. Furthermore, $a_k=0$ when $k$ is even, since
$\widetilde{U}$ is also even with respect to $t$. Consequently,
$\tilde{U}$ is trivial, leading to a contradiction.

Now we prove that the numbers given in \eqref{eigenvalues} are
eigenvalues of \eqref{prob-eigenvalues}: to this end, we need to show
that, for any fixed $m \in \mathbb{N}$, there exists an eigenfunction
of \eqref{prob-eigenvalues} associated with $m(m+n-2s)$ if $n\geq2$,
and an eigenfunction associated with $2m(2m+n-2s)$ if $n=1$. This is
equivalent to find, for any fixed $m \in \mathbb{N}$, a nontrivial
solution to \eqref{prob-div-0} which is even with respect to $x_n$ and
homogeneous of degree $m$ if $n\geq2$, and of degree $2m$ if $n=1$.
To this purpose, we observe that the equation in \eqref{prob-div-0}
can be written as 
\begin{equation}\label{eq:eginvluaes-computation:1}
\Delta U +\frac{1-2s}{t}\partial_tU=0. 
\end{equation}
We first focus on the case $n=1$.  Let $m\in \mathbb{N}$, and consider
the following homogeneous polynomial of degree $2m$, even with respect
to $x_n=x$,
\begin{equation*}
  U(x,t):=\sum_{k=0}^{m} a_k x^{2k} t^{2m-2k},
\end{equation*} 
with $a_0, \dots, a_{m} \in \mathbb{R}$. One can directly verify that
$U$ is a solution of \eqref{eq:eginvluaes-computation:1} if and only
if
\begin{equation*}
  a_{k+1}=-\frac{2(m-k)(m-k-s)}{(k+1)(2k+1)} a_{k} \quad
    \text{for all } k \in \{0,\dots,m-1\}.
\end{equation*}
Thus, choosing $a_0=1$, we obtain a nontrivial solution to
\eqref{prob-div-0} which is even with respect to $x_n=x$ and
homogeneous of degree $2m$.

If $n\geq2$, we first observe that, for any $m \in \mathbb{N}$, there
exists a nontrivial homogeneous harmonic polynomial $P$ of degree $m$
in the variables $x_1,\dots, x_{n},$, which is even with respect to
$x_n$. Indeed, in dimension $n=2$, we consider the polynomial
$Q(y_1,y_2)=\mathrm{Re}((y_1+iy_2)^m)$, which is a
homogeneous harmonic polynomial of degree $m$. Moreover, $Q$ is even
with respect to the variable $y_2$, since
\begin{equation*}
  Q(y_1,-y_2)=\mathrm{Re}((y_1-iy_2)^m)=\mathrm{Re}(\overline{(y_1-iy_2)^m})
  =\mathrm{Re}((y_1+iy_2)^m)=Q(y_1,y_2).
\end{equation*}
Then, in any dimension $n\geq2$, one can consider
$P(x_1,\dots,x_n)=Q(x_{n-1},x_n)$, which is a nontrivial homogeneous
harmonic polynomial of degree $m$, even with respect to $x_n$.  We
conclude that the function $U(x_1,\dots,x_{n},t):= P(x_1,\dots,x_{n})$
is a nontrivial solution to \eqref{prob-div-0}, which is even with
respect to $x_n$ and homogeneous of degree $m$. Statement (i) is
thereby proved.

We finally observe that \cite[Lemma B.2]{DelFelVit22}, see also
\cite[Lemma A.1]{DelFelSic23}, together with the above analysis,
implies that any function $U$ constructed as in \eqref{eq:U} from an
eigenfunction $Y$ is necessarily a polynomial solving \eqref{eq:hom},
thus proving (ii).
\end{proof}

\section{Local Asymptotics and strong unique continuation
  property}\label{seclocal}

\subsection{The blow-up analysis}
In the present subsection, we perform a blow-up analysis in order to
classify the limit profiles of suitably normalized rescaled
solutions. Let us fix a non-trivial weak solution
$w\in H^1(B^+_{r_0}, t^{1-2s}dz)$ to problem \eqref{problemadiWtilde},
which is even with respect to the variable $x_n$,
  i.e.
\begin{equation}\label{eq:w-pari}
  w(x',x_n,t)= w(x',-x_n,t) \quad\text{for a.e.
    $(x',x_n,t)\in B_{r_0}^+$}.
\end{equation}
We recall that the solution $w$ to \eqref{problemadiWtilde},
constructed in Section \ref{sez2} from a solution
$v\in H^1(\mathcal{C}_{\Omega},t^{1-2s}dz)$ to \eqref{eqrisoltadaV}
via a change of variables, a diffeomorphic transformation, and an
even reflection, satisfies \eqref{eq:w-pari}, see \eqref{Wwidehat}.

We consider the family $\{w^\lambda\}_{\lambda\in (0,r_0]}$ defined by
\begin{equation}\label{wlambda}
    w^\lambda(z):=\frac{w(\lambda z)}{\sqrt{H(\lambda)}},\quad z\in B^+_{r_0/\lambda}.
\end{equation}
In view of \eqref{problemadiWtilde}, for
every $\lambda\in (0,r_0]$, $w^\lambda$ weakly solves
\begin{equation}\label{eqriscalata}
    \begin{cases}
      -\mathop{\rm div}\left(t^{1-2s}A(\lambda\,\cdot)\nabla
        w^\lambda\right)
      +t^{1-2s}\lambda\widetilde{\mathbf{b}}(\lambda\cdot)\cdot \nabla
      _x w^\lambda + t^{1-2s}\lambda^2\tilde{a}(\lambda\cdot)
      w^\lambda= 0&\mathrm{in \ } B_{{r_0}/{\lambda}}^+,\\
      - \lim_{t\to0^+}t^{1-2s}\zeta(\lambda\cdot)\partial_t
        w^\lambda=\lambda^{2s} \tilde h(\lambda \cdot) \mathop{\rm
        Tr}w^\lambda &\mathrm{on \ } B'_{r_0/\lambda},
\end{cases}
\end{equation}
that is, for every
$\varphi\in C_c^\infty(B^+_{r_0/\lambda}\cup B'_{r_0/\lambda})$,
\begin{multline}\label{formdeblambda}
  \int_{B^+_{r_0/\lambda}} t^{1-2s}A(\lambda z) \nabla
  w^\lambda(z)\cdot \nabla\varphi(z)\,dz +\lambda
  \int_{B^+_{r_0/\lambda}} t^{1-2s}(\widetilde{\mathbf{b}}
  (\lambda x)\cdot \nabla _x w^\lambda(z) )\varphi(z)\,dz \\
  +\lambda^2 \int_{B^+_{r_0/\lambda}} t^{1-2s}\tilde{a}(\lambda x)
  w^\lambda(z)\varphi(z)\,dz= \lambda^{2s}\int_{B'_{r_0/\lambda}}
  \tilde{h}(\lambda x) \mathop{\rm Tr} w^\lambda(x)\mathop{\rm Tr}
  \varphi(x)\,dx.
\end{multline}
We first observe that the family $\{w^\lambda\}_{\lambda\in (0,r_0]}$
is uniformly bounded in $H^1(B^+_1, t^{1-2s}dz)$.

\begin{lemma}\label{lafamiglialimitata}
  There exists $L>0$ such that
  $\Vert w^\lambda\Vert _{H^1(B^+_1, t^{1-2s}dz)}\leq L$ for every
  $\lambda\in (0,r_0]$.
\end{lemma}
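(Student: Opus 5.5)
The plan is to bound separately the two pieces of the weighted $H^1(B_1^+,t^{1-2s}dz)$ norm of $w^\lambda$, namely the Dirichlet energy $\int_{B_1^+}t^{1-2s}|\nabla w^\lambda|^2\,dz$ and the mass $\int_{B_1^+}t^{1-2s}|w^\lambda|^2\,dz$. Both will be reduced, via scaling, to the quantity $\mathcal N(\lambda)+\frac{n-2s}{2}$, which is bounded above by Lemma \ref{l:twocases} and below by \eqref{Nmaggiore}.

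\emph{Energy.} By the change of variables $z\mapsto\lambda z$,
\begin{equation*}
\int_{B_1^+}t^{1-2s}|\nabla w^\lambda|^2\,dz=\frac{\lambda^{2s-n}}{H(\lambda)}\int_{B_\lambda^+}t^{1-2s}|\nabla w|^2\,dz .
\end{equation*}
Estimate \eqref{importante}, which comes from Lemma \ref{lemmastima1} applied with $g=\tilde h$, gives
\begin{equation*}
\lambda^{2s-n}\int_{B_\lambda^+}t^{1-2s}|\nabla w|^2\,dz\le C_{n,s}\Big(E(\lambda)+\tfrac{n-2s}{2}H(\lambda)\Big).
\end{equation*}
Dividing by $H(\lambda)>0$ (Lemma \ref{lemmasuH}(i)), the energy of $w^\lambda$ is at most $C_{n,s}\big(\mathcal N(\lambda)+\frac{n-2s}{2}\big)\le C_{n,s}\big(C_3+\frac{n-2s}{2}\big)$ for every $\lambda\in(0,r_0)$, by \eqref{Nmindicost}.

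\emph{Mass.} Since $|z|\le1$ on $B_1^+$, we have $\int_{B_1^+}t^{1-2s}|w^\lambda|^2\,dz\le\int_{B_1^+}t^{1-2s}|w^\lambda|^2|z|^{-2}\,dz$, and we control the right-hand side with the Hardy-type inequality of Lemma \ref{lemmahardytype} at $r=1$. That inequality involves the energy just bounded plus the boundary term $\frac{n-2s}{2}\int_{\partial^+B_1^+}t^{1-2s}\mu(z)|w^\lambda|^2\,dS$. There is a subtlety here: the weight $\mu$ appearing in Lemma \ref{lemmahardytype} is evaluated at the point $z$ in the rescaled variable, while $H(\lambda)$ is built with $\mu(\lambda z)$. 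By \eqref{1sumulimitata} and the boundedness of $\mu$ (from \eqref{normadiAlimitata}), $\mu$ takes values in $[1/4,2]$ on $B_{r_0}$, so the two weights are comparable up to a factor of $8$; the same comparability handles the point $z$ itself. Scaling then gives
\begin{equation*}
\int_{\partial^+B_1^+}t^{1-2s}\mu(\lambda z)|w^\lambda|^2\,dS=\frac{\lambda^{2s-n-1}}{H(\lambda)}\int_{\partial^+B_\lambda^+}t^{1-2s}\mu\, w^2\,dS=1,
\end{equation*}
so the boundary term is bounded by a fixed constant. Alternatively, one can use the unweighted version \cite[Lemma 2.4]{FalFel14} directly, which avoids the issue with $\mu$ altogether. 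Either way, the mass of $w^\lambda$ is bounded uniformly in $\lambda$.

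Combining the two bounds gives $L$ for $\lambda\in(0,r_0)$, and the endpoint $\lambda=r_0$ is handled trivially since it is a single function. The argument is essentially routine. The only points needing care are the identification of $\lambda^{2s-n}\int_{B^+_\lambda}\cdots$ with $E(\lambda)+\frac{n-2s}{2}H(\lambda)$ through the scaling exponents (the exponent $2s-n$ in $E$ matches the factor produced by the substitution $z\mapsto\lambda z$ in the Dirichlet integral with weight $t^{1-2s}$) and the comparability of the weights $\mu$ noted above.
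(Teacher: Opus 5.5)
Your proposal is correct and follows the paper's proof essentially step by step. The gradient bound is \eqref{importante} divided by $H(\lambda)$ together with \eqref{Nmindicost}, and the $L^2$ bound comes from the normalization $\int_{\mathbb{S}^+}\theta_{n+1}^{1-2s}\mu(\lambda\cdot)|w^\lambda|^2\,dS=1$, $\mu\geq\frac14$, and the Hardy inequality. Of your two options for the mass, use the unweighted \cite[Lemma 2.4]{FalFel14}, which is what the paper does: $\mu(z)$ at $|z|=1$ is undefined when $r_0\le 1$, so Lemma \ref{lemmahardytype} cannot be applied directly at $r=1$.
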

\begin{proof}
  By \eqref{wlambda}, a change of variable, and Lemma \ref{lemmastima1}, we
  have 
    \begin{equation}\label{gradlimitato}
      \int_{B^+_1} t^{1-2s} |\nabla w^\lambda|^2\,dz =
      \frac{\lambda^{2s-n}}
      {H(\lambda)} \int_{B^+_\lambda} t^{1-2s}|\nabla w|^2\, dz \leq
      C_{n,s}
      \left(\mathcal{N}(\lambda)+\frac{n-2s}{2}\right),
    \end{equation}
    which yields the uniform boundedness of
    $\{\nabla w^\lambda\}_{\lambda\in (0,r_0]}$ in
    $L^2(B^+_1, t^{1-2s}dz)$ in view of \eqref{Nmindicost}. In order
    to prove the uniform boundedness of
    $\{w^\lambda\}_{\lambda\in (0,r_0]}$ in $L^2(B^+_1, t^{1-2s}dz)$,
    we observe that
    \begin{equation}\label{intlambda1}
      \int_{\mathbb{S}^+} \theta^{1-2s}_{n+1}\mu(\lambda\cdot)
      |w^\lambda|^2\,dS= \frac{\lambda^{2s-n-1}}{H(\lambda)}
      \int_{\partial^+ B^+_\lambda} t^{1-2s}\mu |w|^2\,dS= 1.
    \end{equation}
    Then the thesis follows from \eqref{gradlimitato},
    \eqref{intlambda1}, \eqref{1sumulimitata}, and \cite[Lemma
      2.4]{FalFel14}.
\end{proof}
\begin{lemma}\label{lemmaconvsullasfera}
  For every sequence $\lambda_{\ell}\to 0^+$, there exist a subsequence
  $\lambda_{\ell_k}\to 0^+$ and a function
  $g\in L^2(\mathbb{S}^+, \theta^{1-2s}_{n+1}dS)$ such that
    \begin{equation}\label{eq:condebnor}
      A(\lambda_{\ell_k}\cdot)\nabla w^{\lambda_{\ell_k}}\cdot\nu
      \rightharpoonup g \quad \text{weakly in $L^2(\mathbb{S}^+,
        \theta^{1-2s}_{n+1}dS)$ as $k\to \infty$}.
    \end{equation}
\end{lemma}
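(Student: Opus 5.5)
The plan is to prove that $\int_{\mathbb S^+}\theta_{n+1}^{1-2s}|A(\lambda\cdot)\nabla w^\lambda\cdot\nu|^2\,dS$ stays bounded along a suitable subsequence, and then to extract a weakly convergent subsequence in the Hilbert space $L^2(\mathbb{S}^+,\theta^{1-2s}_{n+1}dS)$. The quantity to control is
\begin{equation*}
\int_{\mathbb{S}^+}\theta_{n+1}^{1-2s}\big|A(\lambda\theta)\nabla w^\lambda(\theta)\cdot\nu\big|^2\,dS
=\frac{\lambda^{2-2s-n}\cdot\lambda^{2s}}{\lambda H(\lambda)}\int_{\partial^+B^+_\lambda}t^{1-2s}(A\nabla w\cdot\nu)^2\,dS
=\frac{\lambda D(\lambda)}{2H(\lambda)}\,O(1),
\end{equation*}
where $D$ is the quantity defined in the proof of Lemma \ref{lemmaN'} (up to the factor $\mu$, which is bounded above and below by \eqref{1sumulimitata} and \eqref{sviluppomu}). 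This holds for a.e.\ $\lambda$. So it suffices to find, for any $\lambda_\ell\to0^+$, a subsequence along which $\lambda D(\lambda)/H(\lambda)$ is bounded.

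Pointwise bounds at every $\lambda$ are not available, because $D$ is only $L^1_{\rm loc}$. I therefore plan to bound $D$ in an averaged sense and then replace $\lambda_\ell$ by nearby good radii. Concretely, first I will use the expansion of $\mathcal N'$ in \eqref{N'prima} (which, apart from the Cauchy--Schwarz part, used only \eqref{E'} with the drift term) together with \eqref{terminedidrift} and \eqref{stimaterminedibordoconA}. I expect these to give, for a.e.\ small $r$,
\begin{equation*}
\frac{rD(r)}{H(r)}\le C\Big(r\,\mathcal N'(r)+r^{\varepsilon}+1\Big)+\frac{\sqrt{rD(r)H(r)}}{H(r)}\,O(1).
\end{equation*}
After absorbing the square root, this becomes $\frac{rD(r)}{H(r)}\le C(1+r\mathcal N'(r)+r^{-1+\varepsilon}\cdot r)$, using $\mathcal N\le C_3$ from \eqref{Nmindicost} and \eqref{Nmaggiore}. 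Since $\mathcal N$ has a finite limit and $\mathcal N'+c\tau^{-1+\varepsilon}(C_3+\frac{n-2s}2)\ge0$ by \eqref{dividoN}, I get $\int_{\lambda}^{2\lambda}\mathcal N'(r)\,dr\to0$. Consequently
\begin{equation*}
\frac1\lambda\int_\lambda^{2\lambda}\frac{rD(r)}{H(r)}\,dr\le C.
\end{equation*}

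If the lemma is meant for arbitrary $\lambda_\ell$ (as stated), I will instead use a rescaling trick, as in \cite{FalFel14,DelFelVit22}. From the $L^1$ bound on $[\lambda,2\lambda]$ I will deduce $\int_1^2\int_{\partial^+B_r^+}t^{1-2s}|\nabla w^{\lambda}|^2\,dS\,dr\le C$. For each $\ell$ I then pick $R_\ell\in[1,2]$ with $\int_{\partial^+B^+_{R_\ell}}t^{1-2s}|\nabla w^{\lambda_\ell}|^2\,dS\le C$. By the doubling bounds \eqref{Hminore}--\eqref{Hmag}, $H(R_\ell\lambda_\ell)/H(\lambda_\ell)$ is bounded above and below. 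Hence
\begin{equation*}
w^{R_\ell\lambda_\ell}(z)=\sqrt{H(\lambda_\ell)/H(R_\ell\lambda_\ell)}\;w^{\lambda_\ell}(R_\ell z),
\end{equation*}
and this has boundary flux on $\mathbb S^+$ bounded in $L^2(\mathbb S^+,\theta_{n+1}^{1-2s}dS)$. The main obstacle is exactly this step. The lemma asks for the bound at the given scale $\lambda_\ell$ rather than at $R_\ell\lambda_\ell$, and to pass between them I will use a Caccioppoli-type interior estimate, proving that $\lambda\int_{\partial^+B_\lambda^+}$ is controlled by $\int_{B^+_{2\lambda}\setminus B^+_{\lambda/2}}$ via the Pohozaev identity \eqref{poho} in differential form.

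That route is the first thing I would try: the Pohozaev identity combined with \eqref{useful}. Indeed, \eqref{useful} bounds $2\int_{\partial^+B^+_r}t^{1-2s}\frac{(A\nabla w\cdot\nu)^2}{\mu}\,dS$ by $\int_{\partial^+B_r^+}t^{1-2s}A\nabla w\cdot\nabla w\,dS$ plus bulk terms that are $O(r^{n-2s-1})(E+\frac{n-2s}2H)$. It therefore remains to control the full boundary gradient energy at scale $\lambda_\ell$, which by \eqref{stimaterminedibordoconA} reduces back to $D$. If no pointwise bound is obtainable, I will conclude along subsequences of a.e.\ radii, as is standard. Once the bound holds, weak compactness of bounded sets in $L^2(\mathbb{S}^+,\theta^{1-2s}_{n+1}dS)$ yields $g$ and the subsequence $\lambda_{\ell_k}$.
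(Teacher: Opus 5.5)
Your averaged bound $\frac1\lambda\int_\lambda^{2\lambda}\frac{rD(r)}{H(r)}\,dr\le C$ is fine. It does not prove the lemma, however, and the step you flag as the main obstacle is a genuine gap.

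The statement concerns $A(\lambda_\ell\cdot)\nabla w^{\lambda_\ell}\cdot\nu$ on $\mathbb S^+$ at the prescribed scales of an \emph{arbitrary} sequence. The gradient of a function that is merely in $H^1(B_1^+,t^{1-2s}dz)$ has no $L^2$ trace on a fixed sphere. So what you need is a bound on $\int_{\mathbb S^+}\theta_{n+1}^{1-2s}|\nabla w^\lambda|^2\,dS$ that holds for \emph{every} $\lambda\in(0,r_0]$.

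Everything you build on (\eqref{poho}, \eqref{useful}, \eqref{N'prima}, \eqref{stimaterminedibordoconA}) holds only for a.e.\ $r$, and it controls $D$ only after integration in $r$. It therefore cannot reach a prescribed radius, and your proposed bridges do not fix this:
\begin{itemize}
\item A Caccioppoli inequality bounds bulk integrals of $|\nabla w|^2$ by bulk integrals of $w^2$. It says nothing about the trace of $\nabla w$ on a given sphere, which requires second-order information.
\item The Pohozaev identity in differential form is again an a.e.\ relation between surface quantities at the same radius.
\item Combining \eqref{useful} with \eqref{stimaterminedibordoconA} is circular, as you note yourself.
\end{itemize}

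The good-radii fallback proves a different statement. The radii $R_\ell\lambda_\ell$ do not form a subsequence of $\lambda_\ell$. What you obtain is weak convergence of the fluxes of $w^{\lambda_\ell}$ on spheres of radius $R_\ell$ (equivalently, of $w^{R_\ell\lambda_\ell}$ on $\mathbb S^+$). That could be used only after rewriting the strong-convergence argument of Lemma \ref{lemmablowup1} on $B^+_{R_\ell}$.

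A smaller point: \eqref{Hminore}--\eqref{Hmag} give only $H(R\lambda)/H(\lambda)=O(\lambda^{-\delta})$, not a two-sided bound. The two-sided bound you need comes from integrating $H'/H=2\mathcal N/r+O(1)$ with $\mathcal N$ bounded.

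The missing ingredient is uniform elliptic regularity up to $\{t=0\}$, which is what the paper uses. The rescaled problems \eqref{eqriscalata} have data controlled uniformly in $\lambda$:
\begin{itemize}
\item $A(\lambda\cdot)$ is uniformly elliptic with Lipschitz constant $O(\lambda)$;
\item $\lambda\widetilde{\mathbf b}(\lambda\cdot)$ and $\lambda^2\tilde a(\lambda\cdot)$ are bounded in $L^\infty$;
\item $\lambda^{2s}\tilde h(\lambda\cdot)$ is bounded in $W^{1,p}$, because $2s>n/p$.
\end{itemize}
Hence \cite[Theorem 2.1]{FelSic22}, together with Lemma \ref{lafamiglialimitata}, shows that $\{\nabla_x w^\lambda\}$ is bounded in $H^1(B_1^+,t^{1-2s}dz)$ and $\{t^{1-2s}\partial_t w^\lambda\}$ is bounded in $H^1(B_1^+,t^{2s-1}dz)$. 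Use the continuity of the trace operators onto $\mathbb S^+$ for the two weights and the splitting
\begin{equation*}
\int_{\mathbb S^+}\theta_{n+1}^{1-2s}|\nabla w^\lambda|^2\,dS=\int_{\mathbb S^+}\theta_{n+1}^{1-2s}|\nabla_x w^\lambda|^2\,dS+\int_{\mathbb S^+}\theta_{n+1}^{2s-1}\big|\theta_{n+1}^{1-2s}\partial_t w^\lambda\big|^2\,dS.
\end{equation*}
This bounds $\nabla w^\lambda$ in $L^2(\mathbb S^+,\theta_{n+1}^{1-2s}dS)$ for every $\lambda\in(0,r_0]$. Then \eqref{normadiAlimitata} transfers the bound to $A(\lambda\cdot)\nabla w^\lambda\cdot\nu$, and weak compactness in this Hilbert space gives $g$ and the subsequence. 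No frequency-function estimate is needed for this lemma.
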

\begin{proof}
  We first claim that
\begin{equation}\label{claimlambda}
  \text{$\{\nabla w^{\lambda}\}_{\lambda\in (0,r_0]}$ is bounded in
    $L^2(\mathbb{S}^+,
    \theta^{1-2s}_{n+1}dS)$.}
\end{equation}
In order to prove \eqref{claimlambda}, we observe that
$\{\nabla _x w^{\lambda}\}_{\lambda\in (0,r_0]}$ is bounded in
$H^1(B^+_1, t^{1-2s}dz)$ and
$\{t^{1-2s}\partial_t w^{\lambda}\}_{\lambda\in (0,r_0]}$ is bounded
in $H^1(B^+_1, t^{2s-1}dz)$ by the regularity result in \cite[Theorem
2.1]{FelSic22} and Lemma \ref{lafamiglialimitata}.  Therefore, claim
\eqref{claimlambda} is a consequence of the continuity of the trace
operators in \eqref{traceoper} with $r=1$ and
 \begin{equation*}
   \int_{\mathbb{S}^+}\theta_{n+1}^{1-2s} |\nabla
   w^{\lambda}|^2\,dS=\int_{\mathbb{S}^+}
   \theta_{n+1}^{1-2s} |\nabla_x w^{\lambda}|^2\,dS +
   \int_{\mathbb{S}^+}\theta_{n+1}^{2s-1}
   \left|\theta_{n+1}^{1-2s}\partial_t w^{\lambda}\right|^2\,dS.
 \end{equation*} 
 In view of \eqref{normadiAlimitata}, \eqref{claimlambda} implies that
 also
 $\{A(\lambda \cdot)\nabla w^{\lambda}\cdot\nu\}_{\lambda\in (0,r_0]}$
 is bounded in $L^2(\mathbb{S}^+, \theta^{1-2s}_{n+1}dS)$.
This directly yields the conclusion.
\end{proof}
\begin{lemma}\label{lemmablowup1}
  Let $\gamma$ be as in \eqref{zeta}. For any sequence
  $\lambda_\ell\to 0^+$, there exist a subsequence $\lambda_{\ell_k}\to 0^+$
  and a homogeneous function $\bar{w}\in H^1(B^+_1, t^{1-2s}dz)$ of
  degree $\gamma$, i.e. satisfying
    \begin{equation*}
      \bar{w} (z)=|z|^\gamma\bar{w}\left(\frac{z}{|z|}\right)\quad
      \text{for all $z\in B^+_1$}, 
    \end{equation*}
    such that 
\begin{equation}\label{wlambdaconvforte}
  w^{\lambda_{\ell_k}}\to \bar{w}\quad \text{in $H^1(B^+_1, t^{1-2s}dz)$ as $k\to \infty$}. 
    \end{equation}
    In addition, $\gamma(\gamma+n-2s)$ is an eigenvalue of
      problem \eqref{prob-eigenvalues} and
    $\psi:= \bar{w}|_{\mathbb{S}^+}$ is an associated
    $L^2(\mathbb{S}^+,\theta_{n+1}^{1-2s}dS)$-normalized
    eigenfunction.
\end{lemma}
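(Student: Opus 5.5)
By Lemma \ref{lafamiglialimitata}, given $\lambda_\ell\to0^+$, I extract a subsequence with $w^{\lambda_{\ell_k}}\rightharpoonup\bar w$ weakly in $H^1(B_1^+,t^{1-2s}dz)$, and also along the same subsequence the convergence \eqref{eq:condebnor} of Lemma \ref{lemmaconvsullasfera}. By compactness of the trace operator \eqref{traceoper} and of $\mathop{\rm Tr}$ into $L^{2^\ast_s}$-type spaces (Lemma \ref{lemmadiimmersione} scaled), the traces converge strongly. Since $\mu(\lambda z)\to1$ uniformly on $B_1$ by \eqref{sviluppomu}, \eqref{intlambda1} gives $\int_{\mathbb S^+}\theta_{n+1}^{1-2s}\bar w^2\,dS=1$, so $\bar w\not\equiv0$.

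Next I pass to the limit in \eqref{formdeblambda}. Here $A(\lambda\cdot)\to\mathrm{Id}_{n+1}$ uniformly on $B_1$ by \eqref{Aperturbazione}. The drift and zero-order terms carry factors $\lambda,\lambda^2$ with bounded coefficients \eqref{nuovaregolarita}. The boundary term is $O(\lambda^{2s-n/p})$ by H\"older and the trace inequality (Lemma \ref{lemmafallfelli}), as in \eqref{intB'r}. Thus $\bar w$ weakly solves $\mathop{\rm div}(t^{1-2s}\nabla\bar w)=0$ in $B_1^+$ with $\lim t^{1-2s}\partial_t\bar w=0$ on $B_1'$. Evenness in $x_n$ is inherited from \eqref{eq:w-pari}.

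Strong convergence is the delicate step. I test the equation for $w^\lambda$ with $w^\lambda$ itself on $B_1^+$, i.e. I use the rescaled version of \eqref{identity}:
\[
\int_{B_1^+}t^{1-2s}A(\lambda z)\nabla w^\lambda\cdot\nabla w^\lambda\,dz=\int_{\mathbb S^+}\theta_{n+1}^{1-2s}(A(\lambda\cdot)\nabla w^\lambda\cdot\nu)w^\lambda\,dS+o(1).
\]
By \eqref{eq:condebnor} and the strong trace convergence, the right side tends to $\int_{\mathbb S^+}\theta_{n+1}^{1-2s}g\bar w\,dS$. Testing the limit problem against $\varphi$ and comparing with the limit of \eqref{formdeblambda} for test functions not vanishing on $\mathbb S^+$ identifies $g=\partial_\nu\bar w$ weakly. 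Hence $\int t^{1-2s}|\nabla w^\lambda|^2\to\int t^{1-2s}|\nabla\bar w|^2$, which together with weak convergence gives \eqref{wlambdaconvforte}. If identifying $g$ is troublesome, I would instead run the argument on $B_r^+$ for a.e. $r\in(0,1)$ and use the trace bound \eqref{claimlambda} to handle the spherical term.

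For homogeneity, I define $E_\lambda,H_\lambda$ for $w^\lambda$ and note $\mathcal N_\lambda(r)=\mathcal N(\lambda r)\to\gamma$ for every $r\in(0,1]$. Strong convergence on each $B_r^+$ (the previous step, rescaled) yields $\bar{\mathcal N}(r):=\frac{r\int_{B_r^+}t^{1-2s}|\nabla\bar w|^2}{\int_{\partial^+B_r^+}t^{1-2s}\bar w^2}=\gamma$ for all $r$. Hence $\bar{\mathcal N}'\equiv0$, and the Cauchy–Schwarz equality case in the unperturbed analogue of \eqref{N'prima} forces $\partial_\nu\bar w=\frac{\gamma}{r}\bar w$ on each sphere. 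Integrating in $r$ gives $\bar w(z)=|z|^\gamma\psi(z/|z|)$. Plugging this into the limit equation, in polar coordinates, shows that $\psi\in H^1_{\rm even}(\mathbb S^+,\theta_{n+1}^{1-2s}dS)$ satisfies \eqref{eq-egienvlulues} with eigenvalue $\gamma(\gamma+n-2s)$, and $\psi$ is normalized by the first step. I expect the main obstacle to be the strong $H^1$ convergence, specifically controlling the boundary flux term.
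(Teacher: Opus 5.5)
Your proposal is correct and follows essentially the same route as the paper: weak compactness, nontriviality via the compact trace \eqref{traceoper} and \eqref{intlambda1}, the limit equation from \eqref{formdeblambda}, strong convergence by testing with $w^{\lambda}$ and using the weak limit $g$ of the conormal derivatives on $\mathbb{S}^+$ together with the identity $\int_{B_1^+}t^{1-2s}\nabla\bar w\cdot\nabla\varphi\,dz=\int_{\mathbb S^+}\theta_{n+1}^{1-2s}g\varphi\,dS$ at $\varphi=\bar w$ (this is exactly your ``$g=\partial_\nu\bar w$ weakly''), and homogeneity from $\bar{\mathcal N}\equiv\gamma$ via the Cauchy--Schwarz equality case. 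The one detail you leave implicit is that $\int_{\partial^+B_r^+}t^{1-2s}\bar w^2\,dS>0$ for every $r\in(0,1]$, which is needed for $\bar{\mathcal N}$ to be defined; the paper obtains this by repeating the unique-continuation argument of Lemma \ref{lemmasuH}-(i).
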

\begin{proof}
  Thanks to Lemma \ref{lafamiglialimitata}, for any sequence
  $\lambda_\ell\to 0^+$,
  there exist a subsequence $\lambda_{\ell_k}\to 0^+$
  and a function $\bar{w}\in H^1(B^+_1, t^{1-2s}dz)$ such that
\begin{equation}\label{convdebo}
  w^{\lambda_{\ell_k}} \rightharpoonup \bar{w} \quad
  \text{in $H^1(B^+_1, t^{1-2s}dz)$ as $k\to \infty$}. 
\end{equation}
We first observe that $\bar{w}\not\equiv0$: indeed by the compactness of the
trace operator \eqref{traceoper} with $r=1$ and \eqref{convdebo}, we
have
\begin{equation}\label{convfortetracce}
  w^{\lambda_{\ell_k}} \to \bar{w} \quad
  \text{strongly in $L^2(\mathbb{S}^+, \theta_{n+1}^{1-2s}dS)$ as $k\to \infty$}; 
\end{equation}
combining \eqref{sviluppomu},
\eqref{intlambda1}, and \eqref{convfortetracce}, we conclude that
\begin{equation}\label{intbarw1}
\int_{\mathbb{S}^+}\theta_{n+1}^{1-2s}|\bar{w}|^2\,dS=1,  
\end{equation}
hence $\bar{w}\not\equiv0$.

We now prove that $\bar{w}$ weakly solves 
\begin{equation}\label{problemarisoltodawbar}
      \begin{cases}
  -\mathop{\rm div}(t^{1-2s}\nabla \bar{w})= 0&\mathrm{in \ } B_{1}^+,\\
  \lim_{t\to0^+} t^{1-2s}\partial_ t\bar{w}=0 &\mathrm{on \ } B'_{1},
\end{cases}
\end{equation}
i.e. 
\begin{equation}\label{formdeblimite}
    \int_{B^+_1} t^{1-2s}\nabla \bar{w}\cdot\nabla\varphi\,dz=0
  \end{equation}
  for every $\varphi\in C^\infty_c(B^+_1\cup B'_1)$.  To this aim, we
  first notice that, if $\varphi\in C^\infty_c(B^+_1\cup B'_1)$, then
  $\varphi\in C_c^\infty(B^+_{r_0/{\lambda_{\ell_k}}}\cup
  B'_{r_0/\lambda_{\ell_k}})$ for sufficiently large $k$. Hence
  \eqref{formdeblambda} is satisfied for sufficiently large $k$, i.e.
\begin{multline}\label{devofarneilimite}
  \int_{B^+_1} t^{1-2s}A(\lambda_{\ell_k} z) \nabla
  w^{\lambda_{\ell_k}}(z)\cdot
  \nabla\varphi(z)\,dz +\lambda_{\ell_k} \int_{B^+_1} t^{1-2s}
  (\widetilde{\mathbf{b}}(\lambda_{\ell_k} x)\cdot \nabla _x
  w^{\lambda_{\ell_k}}(z) )
  \varphi(z)\,dz \\
  +\lambda_{\ell_k}^2 \int_{B^+_1} t^{1-2s}\tilde{a}(\lambda_{\ell_k} x)
  w^{\lambda_{\ell_k}}(z)\varphi(z)\,dz= 
  \lambda_{\ell_k}^{2s}\int_{B'_1}\tilde{h}(\lambda_{\ell_k} x) \mathop{\rm
    Tr} w^{\lambda_{n\ell_k}}(x)\mathop{\rm Tr} \varphi(x)\,dx
\end{multline}
for sufficiently large $k$.  By \eqref{Aperturbazione} and  the
Cauchy–Schwarz inequality, we have
\begin{multline*}
  \int_{B^+_1} t^{1-2s}A(\lambda_{\ell_k} z) \nabla
  w^{\lambda_{\ell_k}}(z)\cdot \nabla\varphi(z)\,dz =
  \int_{B^+_1} t^{1-2s}\nabla w^{\lambda_{\ell_k}}(z)\cdot \nabla\varphi(z)\,dz\\
  +O(\lambda_{\ell_k}) \left(\int_{B^+_1} t^{1-2s}|\nabla
    w^{\lambda_{\ell_k}}(z)|^2\,dz\right)^{\frac{1}{2}}\left(\int_{B^+_1}
    t^{1-2s}
    |\nabla \varphi(z)|^2\,dz\right)^{\frac{1}{2}},
\end{multline*}
hence, by Lemma \ref{lafamiglialimitata} and
\eqref{convdebo} we conclude that
\begin{equation}\label{lim1}
  \int_{B^+_1} t^{1-2s}A(\lambda_{\ell_k} z) \nabla
  w^{\lambda_{\ell_k}}(z)\cdot \nabla\varphi(z)\,dz
  \to
  \int_{B^+_1} t^{1-2s}\nabla \bar{w}(z)\cdot\nabla\varphi(z)\,dz \quad \text{as $k\to \infty$.}
\end{equation}
Moreover, by \eqref{nuovaregolarita} and Lemma \ref{lafamiglialimitata}, 
\begin{align}
  \label{lim2}&    \lambda_{\ell_k} \int_{B^+_1}
    t^{1-2s}(\widetilde{\mathbf{b}}(\lambda_{\ell_k}
    x)\cdot \nabla _x w^{\lambda_{\ell_k}}(z) )\varphi(z)\,dz
    +\lambda_{\ell_k}^2  \int_{B^+_1}
    t^{1-2s}\tilde{a}(\lambda_{\ell_k} x)
    w^{\lambda_{\ell_k}}(z)\varphi(z)\,dz\\
\notag=\,&O(\lambda_{\ell_k})\left(\int_{B^+_1} t^{1-2s}|
  \varphi|^2\,dz\right)^{\frac{1}{2}}
\bigg(\bigg(\int_{B^+_1} t^{1-2s}|\nabla
w^{\lambda_{\ell_k}}|^2\,dz\bigg)^{\frac{1}{2}}
           +\bigg(\int_{B^+_1} t^{1-2s}
           |  w^{\lambda_{\ell_k}}|^2\,dz\bigg)^{\frac{1}{2}}\bigg)\\
  \notag=\,&o(1)\quad\text{as }k\to\infty.
\end{align}
Finally, applying the H\"{o}lder inequality to
$\tilde{h}(\lambda_{\ell_k}\cdot)\in L^{p}(B'_1)$ and
$\mathop{\rm Tr}w^{\lambda_{\ell_k}},\mathop{\rm Tr}\varphi\in
L^{2^\ast_s}(B'_1)$, and using Lemma \ref{lemmafallfelli},
\eqref{1sumulimitata}, and \eqref{intlambda1}, we obtain
\begin{multline*}
  \lambda_{\ell_k}^{2s}\biggl|\int_{B'_1}\tilde{h}(\lambda_{\ell_k} x)
  \mathop{\rm Tr} w^{\lambda_{\ell_k}}(x)\mathop{\rm Tr}
  \varphi(x)\,dx\biggr|
  \leq S_{n,s}\lambda_{\ell_k}^{2s} \,
  \omega_n^{\frac{2sp-n}{np}}\Vert \tilde{h}(\lambda_{\ell_k}\cdot)\Vert_{L^{p}(B'_1)} \\
  \times\left(2(n-2s)\int_{\mathbb{S}^+}\theta_{n+1}^{1-2s}\mu(\lambda_{\ell_k}\cdot)
    |w^{\lambda_{\ell_k}}|^2\,dS+\int_{B^+_1}
    t^{1-2s}|\nabla w^{\lambda_{\ell_k}}|^2\,dz\right)^{\frac{1}{2}}\\
  \times
  \left(\frac{n-2s}{2}\int_{\partial^+B^+_1}t^{1-2s}\varphi^2\,dS
    +\int_{B^+_1} t^{1-2s}|\nabla \varphi|^2\,dz\right)^{\frac{1}{2}}
  \\
  =S_{n,s}\lambda_{\ell_k}^{2s-\frac{n}{p}}
  \,\omega_n^{\frac{2sp-n}{np}}\Vert
  \tilde{h}\Vert_{L^{p}(B'_{\lambda_{\ell_k}})} \left(2(n-2s)+
    \int_{B^+_1} t^{1-2s}|\nabla
    w^{\lambda_{\ell_k}}|^2\,dz\right)^{\frac{1}{2}}\\
  \times
  \left(\frac{n-2s}{2}\int_{\partial^+B^+_1}t^{1-2s}\varphi^2\,dS
    +\int_{B^+_1} t^{1-2s}|\nabla \varphi|^2\,dz\right)^{\frac{1}{2}}
  ,
\end{multline*}
$\omega_n$ being the $n$-dimensional Lebesgue measure of $B'_1$; hence, Lemma
\ref{lafamiglialimitata} and the fact that $2s-\frac{n}{p}>0$ yield
\begin{equation}
  \label{lim3}
\lim_{k\to\infty }
  \lambda_{\ell_k}^{2s}\int_{B'_1}\tilde{h}(\lambda_{\ell_k} x) 
\mathop{\rm Tr} w^{\lambda_{\ell_k}}(x)\mathop{\rm Tr} \varphi(x)\,dx=0
\end{equation}
Combining \eqref{lim1}, \eqref{lim2}, and \eqref{lim3}, and taking the
limit as $k\to\infty$ in \eqref{devofarneilimite}, we obtain
\eqref{formdeblimite}.

The next step is to show that the convergence in \eqref{convdebo} is
actually strong, meaning that \eqref{wlambdaconvforte} holds.
Multiplying \eqref{eqriscalata} by any fixed
$\varphi\in H^1(B^+_1,t^{1-2s}dz)$ and integrating by parts (see
  \cite[Proposition 3.7]{FelSic22}), we obtain that, for sufficiently
large $k$,
\begin{multline}\label{anchecolpezzodibordo}
  \int_{B^+_1} t^{1-2s}A(\lambda_{\ell_k} z) \nabla
  w^{\lambda_{\ell_k}}(z)\cdot \nabla\varphi(z)\,dz +\lambda_{\ell_k}
  \int_{B^+_1} t^{1-2s} (\widetilde{\mathbf{b}}(\lambda_{\ell_k}
  x)\cdot \nabla _x w^{\lambda_{\ell_k}}(z))
  \varphi(z)\,dz \\
  +\lambda_{\ell_k}^2 \int_{B^+_1} t^{1-2s}\tilde{a}(\lambda_{\ell_k}
  x)
  w^{\lambda_{\ell_k}}(z)\varphi(z)\,dz\\
  = \lambda_{\ell_k}^{2s}\int_{B'_1}\tilde{h}(\lambda_{\ell_k} x)
  \mathop{\rm Tr} w^{\lambda_{\ell_k}}(x)\mathop{\rm Tr}
  \varphi(x)\,dx + \int_{\mathbb{S}^+}
  \theta_{n+1}^{1-2s}(A(\lambda_{\ell_k} z) \nabla
  w^{\lambda_{\ell_k}}\cdot\nu) \varphi\, ds.
\end{multline}
We observe that the limits in \eqref{lim1}--\eqref{lim3} still hold
for any $\varphi\in H^1(B^+_1,t^{1-2s}dz)$. Thus, also invoking Lemma
\ref{lemmaconvsullasfera}, we find that, up to a further subsequence
still denoted by $\lambda_{\ell_k}$, taking the limit as
$k\to \infty$ in \eqref{anchecolpezzodibordo} yields
\begin{equation}\label{g}
  \int_{B^+_1} t^{1-2s} \nabla \bar{w}\cdot\nabla \varphi\,dz=
  \int_{\mathbb{S}^+}
  \theta_{n+1}^{1-2s}g\varphi\,dS,
\end{equation}
for some function $g\in L^2(\mathbb{S}^+, \theta^{1-2s}_{n+1}dS)$
satisfying \eqref{eq:condebnor}. On the other hand, choosing
$\varphi=w^{\lambda_{\ell_k}}$ in \eqref{anchecolpezzodibordo}, for
sufficiently large $k$ we have
\begin{multline}\label{testataconleistessa}
  \int_{B^+_1} t^{1-2s}A(\lambda_{\ell_k} \cdot) \nabla
  w^{\lambda_{\ell_k}}\cdot \nabla w^{\lambda_{\ell_k}}\,dz
  +\lambda_{\ell_k} \int_{B^+_1} t^{1-2s}(\widetilde{\mathbf{b}}
  (\lambda_{\ell_k} \cdot)\cdot \nabla _x w^{\lambda_{\ell_k}})
  w^{\lambda_{\ell_k}}\,dz \\
  +\lambda_{\ell_k}^2 \int_{B^+_1} t^{1-2s}\tilde{a}(\lambda_{\ell_k}
  \cdot)
  |w^{\lambda_{\ell_k}}|^2\,dz\\
  = \lambda_{\ell_k}^{2s} \int_{B'_1}\tilde{h}(\lambda_{\ell_k}
  \cdot) |\mathop{\rm Tr} w^{\lambda_{\ell_k}}|^2\,dx +
  \int_{\mathbb{S}^+} \theta_{n+1}^{1-2s}(A(\lambda_{\ell_k} \cdot)
  \nabla w^{\lambda_{\ell_k}}\cdot\nu) w^{\lambda_{\ell_k}}\, dS.
         \end{multline}
         By proceeding as in the proof of \eqref{lim2} and \eqref{lim3}
           with $\varphi=w^{\lambda_{\ell_k}}$ and taking 
           Lemma \ref{lafamiglialimitata} into account, we can show
           that
           \begin{equation}\label{eq:dt0}
             \lim_{k\to\infty}\bigg(\lambda_{\ell_k} \int_{B^+_1} t^{1-2s}(\widetilde{\mathbf{b}}
  (\lambda_{\ell_k} \cdot)\cdot \nabla _x w^{\lambda_{\ell_k}})
  w^{\lambda_{\ell_k}}\,dz 
  +\lambda_{\ell_k}^2 \int_{B^+_1} t^{1-2s}\tilde{a}(\lambda_{\ell_k}
  \cdot)
  |w^{\lambda_{\ell_k}}|^2\,dz\bigg)=0
           \end{equation}
           and
\begin{equation}\label{eq:ut0}
  \lim_{k\to\infty}\bigg(\lambda_{\ell_k}^{2s} \int_{B'_1}\tilde{h}(\lambda_{\ell_k}
  \cdot) |\mathop{\rm Tr} w^{\lambda_{\ell_k}}|^2\,dx\bigg) =0.
\end{equation}  
Hence, from \eqref{testataconleistessa} and \eqref{eq:condebnor} it
follows that
\begin{align}\label{dausaresubito}
  \lim_{k\to \infty}\int_{B^+_1} t^{1-2s}A(\lambda_{\ell_k} \cdot)
  \nabla w^{\lambda_{\ell_k}}\cdot \nabla
  w^{\lambda_{\ell_k}}\,dz&=\lim_{k\to\infty}
                            \int_{\mathbb{S}^+}
                            \theta_{n+1}^{1-2s}(A(\lambda_{\ell_k}
                            \cdot)
                            \nabla w^{\lambda_{\ell_k}}\cdot\nu) w^{\lambda_{\ell_k}}\, dS\\
  \notag    &= \int_{\mathbb{S}^+}\theta_{n+1}^{1-2s} g\bar{w}\,dS,
\end{align}
where we also used \eqref{convfortetracce}.  Therefore, by combining
\eqref{Aperturbazione}, Lemma \ref{lafamiglialimitata},
\eqref{dausaresubito}, and \eqref{g} with $\varphi=\bar{w}$, we
finally obtain that
\begin{align*}
  \lim_{k\to \infty}&\int_{B^+_1} t^{1-2s}|\nabla w^{\lambda_{\ell_k}}|^2\,dz\\
                    &=\lim_{k\to \infty}\biggl(\int_{B^+_1} t^{1-2s}
                      A(\lambda_{\ell_k} \cdot) \nabla
                      w^{\lambda_{\ell_k}}\cdot
                      \nabla
                      w^{\lambda_{\ell_k}}\,dz+O(\lambda_{\ell_k})
                      \int_{B^+_1} t^{1-2s} |\nabla w^{\lambda_{\ell_k}}|^2\,dz\biggr)\\
                    &= \int_{\mathbb{S}^+}\theta_{n+1}^{1-2s} g\bar{w}\,dS =
                      \int_{B^+_1}
                      t^{1-2s} |\nabla \bar{w}|^2\,dz.
\end{align*}
This, together with the weak convergence of $\nabla w^{\lambda_{\ell_k}}$
to $\nabla \bar{w}$ in $L^2(B^+_1, t^{1-2s}dz)$ by virtue of
\eqref{convdebo}, implies that
$\nabla w^{\lambda_{\ell_k}} \to \nabla \bar{w}$ strongly in
$L^2(B^+_1, t^{1-2s}dz)$. By \cite[Lemma 2.4]{FalFel14} and
\eqref{convfortetracce}, this also yields the convergence
$w^{\lambda_{\ell_k}} \to \bar{w}$ in $L^2(B^+_1, t^{1-2s}dz)$.
Statement \eqref{wlambdaconvforte} is thereby proved.

To complete the proof of lemma, it remains to show that $\bar{w}$ is
homogeneous of degree $\gamma$. For this purpose, we consider its
Almgren frequency function, which is defined as follows in view of
\eqref{problemarisoltodawbar}.  For every $r\in (0,1]$, let
\begin{equation*}
    E_{\bar{w}}(r):= r^{2s-n}\int_{B^+_r} t^{1-2s}|\nabla \bar{w}|^2\,dz,
\end{equation*}
and 
\begin{equation*}
    H_{\bar{w}}(r):= r^{2s-n-1}\int_{\partial ^+ B^+_r} t^{1-2s}\bar{w}^2\,dS. 
\end{equation*}
By arguing as in the proof of Lemma \ref{lemmasuH}-(i) with
$A=\mathrm{Id}_{n+1}$, $\tilde{\mathbf{b}}=\mathbf{0}$, $\tilde{a}=0$,
$\tilde h=0$, and $\mu=1$, one can show that $H_{\bar{w}}(r)>0$
for every $r\in (0,1]$.  Thus the function
\begin{equation}\label{Nwbar}
  \mathcal{N}_{\bar{w}}(r):=
  \frac{E_{\bar{w}}(r)}{H_{\bar{w}}(r)}\quad
  \text{for all $r\in (0,1]$}
\end{equation}
is well-defined. For each fixed $k$, we also consider the Almgren 
frequency function associated with $w^{\lambda_{\ell_k}}$: 
for every $r\in (0,1]$, let 
\begin{align*}
  &E_k(r)\\
  &:= r^{2s-n} \!\int_{B^+_r}\!t^{1-2s}\bigg(\!A(\lambda_{\ell_k}
    \cdot)
    \nabla w^{\lambda_{\ell_k}}\!\cdot\! \nabla w^{\lambda_{\ell_k}} 
    + \left(\lambda_{\ell_k} \tilde{\mathbf{b}}(\lambda_{\ell_k}\cdot)
    \!\cdot\! \nabla_x w^{\lambda_{\ell_k}} + \lambda_{\ell_k}^2
    \tilde{a}(\lambda_{\ell_k}\cdot)w^{\lambda_{\ell_k}}\!\right)
    w^{\lambda_{\ell_k}}\! \bigg) dz\\
  &\quad -
    \lambda_{\ell_k}^{2s}\int_{B'_r}\tilde{h}(\lambda_{\ell_k}\cdot)
    |\mathop{\rm Tr}w^{\lambda_{\ell_k}}|^2\,dx
    \end{align*}
and 
\begin{equation*}
  H_k(r):=
  r^{2s-n-1}\int_{\partial^+ B^+_r} t^{1-2s}\mu(\lambda_{\ell_k}\cdot)|w^{\lambda_{\ell_k}}|^2\,dS.
\end{equation*}
For $k$ sufficiently large, $H_k(r)>0$ by Lemma
\ref{lemmasuH}-(i). Then, the frequency
\begin{equation*}
  \mathcal{N}_k(r):= \frac{E_k(r)}{H_k(r)},\quad r\in (0,1],
\end{equation*}
is well defined. Moreover, by direct computation,
\begin{equation*}
  E_k(r)=   \frac{E(\lambda_{\ell_k}r)}{H(\lambda_{\ell_k})}, 
  \quad H_k(r)=   \frac{H(\lambda_{\ell_k}r)}{H(\lambda_{\ell_k})}, 
\end{equation*}
and hence 
\begin{equation}\label{Nkugualea}
  \mathcal{N}_k(r)= \mathcal{N}(\lambda_{\ell_k}r).  
\end{equation}
From \eqref{Aperturbazione}, Lemma \ref{lafamiglialimitata},
\eqref{wlambdaconvforte}, \eqref{eq:dt0}, and \eqref{eq:ut0} it
follows that
\begin{equation}\label{Ekconvergea}
    \lim_{k\to\infty}E_k(r)= E_{\bar{w}}(r).
\end{equation}
Furthermore,  combining \eqref{sviluppomu} and \eqref{convfortetracce} yields
\begin{equation}\label{Hkconvergea}
    \lim_{k\to\infty}H_k(r)= H_{\bar{w}}(r).
\end{equation}
Thus, from \eqref{zeta}, \eqref{Nkugualea}, \eqref{Ekconvergea}, and
\eqref{Hkconvergea}, it follows that $\mathcal{N}_{\bar{w}}$ is
constant; more precisely, for every $r\in (0,1]$,
\begin{equation}\label{Nwbarcostante}
  \mathcal{N}_{\bar{w}}(r)=\lim_{k\to \infty}
  \mathcal{N}_k(r)=\lim_{k\to \infty}\mathcal{N}(\lambda_{\ell_k}r)=\gamma. 
\end{equation}
Consequently, $\mathcal{N}_{\bar{w}}'(r)=0$ for a.e. $r\in(0,1]$.  On
the other hand, arguing as in the proof of Lemma \ref{lemmaN'} with
$A=\mathrm{Id}_{n+1}$, $\tilde{\mathbf{b}}=\mathbf{0}$, $\tilde{a}=0$,
and $\mu=1$, we can prove that
\begin{equation*}
    \mathcal{N}_{\bar{w}}'(r)= 2r\frac{\left(\int_{\partial^+ B^+_r} 
    t^{1-2s}|\partial_\nu \bar{w}|^2\,dS\right)\left(
    \int_{\partial^+ B^+_r} t^{1-2s} \bar{w}^2\,dS\right)-
    \left(\int_{\partial^+ B^+_r} t^{1-2s}\partial_\nu 
    \bar{w} \,\bar{w}\,dS\right)^2}{\left(\int_{\partial^+ B^+_r}
    t^{1-2s}\bar{w}^2\,dS\right)^2}.
\end{equation*}
Hence 
\begin{equation*}
    \left(\int_{\partial^+ B^+_r} 
    t^{1-2s}|\partial_\nu \bar{w}|^2\,dS\right)\!\left(
    \int_{\partial^+ B^+_r} t^{1-2s} \bar{w}^2\,dS\right)\!-\!
    \left(\int_{\partial^+ B^+_r} t^{1-2s}\partial_\nu 
    \bar{w} \,\bar{w}\,dS\right)^{\!2}\!=0\text{ for a.e. }r\in(0,1).
\end{equation*}
In the Hilbert space $L^2(\partial^+ B^+_r, t^{1-2s}ds)$, equality in
the Cauchy-Schwarz inequality holds only if the vectors are linearly
dependent. Therefore, for a.e. $r\in (0,1)$ and each
$\theta\in \mathbb{S}^+$,
\begin{equation}\label{integrarerispettoar}
    \partial _\nu \bar{w}(r\theta)= \eta(r) \bar{w}(r\theta),
\end{equation}
for some function $\eta=\eta(r)$ defined a.e. in $(0,1)$. Multiplying
the above identity by $t^{1-2s}\bar{w}(z)$ and integrating over
$\partial^+ B^+_r$, we obtain 
\begin{equation}\label{eq:au1}
    \int_{\partial^+ B^+_r}t^{1-2s}\bar{w}\partial _\nu \bar{w}\,dS=\eta(r)
    \int_{\partial^+ B^+_r}t^{1-2s}\bar{w}^2\,dS=\eta(r)
    r^{n+1-2s}H_{\bar{w}}(r)
\end{equation}
for a.e. $r\in(0,1)$. Moreover, by \eqref{aux2} applied to $\bar w$
(hence, with $\mu\equiv 1$ and $\nabla \mu\equiv0$), we have
  \begin{equation}\label{eq:au2}
  H_{\bar w}'(r)= 2r^{2s-n-1}\int_{\partial^+ B^+_r}t^{1-2s}
  \bar w\partial_\nu \bar w \,dS,
\end{equation}
whereas \eqref{identity} applied to $\bar w$  (hence, with 
 $A=\mathrm{Id}_{n+1}$, $\tilde{\mathbf{b}}=\mathbf{0}$, $\tilde{a}=0$, and 
$\tilde h=0$) yields 
\begin{equation}\label{eq:au3}
\int_{\partial^+ B^+_r}t^{1-2s}
  \bar w\partial_\nu \bar w \,dS=r^{n-2s}E_{\bar w}(r)
\end{equation}
for a.e. $r\in (0,1)$. Combining \eqref{eq:au1}, \eqref{eq:au2}, and
\eqref{eq:au3} with \eqref{Nwbar} and \eqref{Nwbarcostante}, we
conclude that
\begin{equation*}
  \eta(r)= \frac{H'_{\bar{w}}(r)}{2 H_{\bar{w}}(r)}=
  \frac{\mathcal{N}_{\bar{w}}(r)}{r}=\frac{\gamma}{r}\quad\text{for a.e. }r\in(0,1).
\end{equation*}
 Substituting this into \eqref{integrarerispettoar} and
integrating over $(r,1)$ for any $r\in (0,1)$, we obtain that
\begin{equation}\label{weomogenea}
    \bar{w}(r\theta)=r^\gamma\psi(\theta)
\end{equation}
being $\psi:= \bar{w}|_{\mathbb{S}^+}$. By \eqref{intbarw1} it follows
that
\begin{equation}\label{normalizzata}
    \int_{\mathbb{S}^+}\theta_{n+1}^{1-2s}|\psi|^2\,dS=1,
\end{equation}
hence $\psi\not\equiv0$ on $\mathbb{S}^+$. We observe that
$\psi\in H_{\rm even}^1(\mathbb{S}^+,\theta_{n+1}^{1-2s}dS)$, since
$w^{\lambda_{n_k}}$ is even with respect to the variable $x_n$ by
\eqref{eq:w-pari} and \eqref{wlambda}. Thus, substituting
\eqref{weomogenea} into \eqref{problemarisoltodawbar}, we find that
$\gamma(\gamma+n-2s)$ is an eigenvalue of problem
\eqref{prob-eigenvalues} with $\psi$ as an associated eigenfunction.
\end{proof}

\begin{corollary}\label{cor}
  Let $\gamma$ be as in \eqref{zeta}.  Then there exists
  $m_0\in\mathbb{N}$ (even if $n=1$) such that
    \begin{equation*}
        \gamma=m_0.
    \end{equation*}
\end{corollary}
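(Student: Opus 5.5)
By Lemma \ref{lemmablowup1}, applied to any sequence $\lambda_\ell\to0^+$, the number $\gamma(\gamma+n-2s)$ is an eigenvalue of problem \eqref{prob-eigenvalues}. Proposition \ref{p:eigenvalues}(i) then gives $\mathbb{N}_0\ni m_0$ such that
\[
\gamma(\gamma+n-2s)=m_0(m_0+n-2s),
\]
with $m_0$ even when $n=1$; the task is to solve this equation for $\gamma$.

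Consider $f(x)=x(x+n-2s)$. It is a quadratic with roots $0$ and $-(n-2s)$, and it is symmetric about the vertex $-\frac{n-2s}{2}$. Since $m_0(m_0+n-2s)\ge0$, the equation $f(\gamma)=m_0(m_0+n-2s)$ has exactly two real solutions, namely $m_0$ and $-(n-2s)-m_0$. The second one satisfies $-(n-2s)-m_0<-\frac{n-2s}{2}$, because $n>2s$. On the other hand, \eqref{eq:gamma-ge} gives $\gamma\ge-\frac{n-2s}{2}$, so that root is excluded. Hence $\gamma=m_0$. Equivalently, formula \eqref{zetaesplicito} chooses the nonnegative branch of the square root, and this choice is forced by \eqref{eq:gamma-ge}.

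A second route uses the homogeneity of the blow-up limit directly. The limit $\bar w$ in Lemma \ref{lemmablowup1} is homogeneous of degree $\gamma$ and lies in $H^1(B_1^+,t^{1-2s}dz)$. Finiteness of $\int_{B_1^+}t^{1-2s}|\nabla\bar w|^2$ requires $2(\gamma-1)+n+1-2s>-1$, that is $\gamma>-\frac{n-2s}{2}$, which again rules out the negative root. Either argument yields the corollary.

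The main obstacle is choosing the right root of the quadratic. This step is settled by the lower bound \eqref{eq:gamma-ge}, which comes from \eqref{Nmaggiore}, or alternatively by the $H^1$-integrability argument just given.
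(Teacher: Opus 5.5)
Your first argument is exactly the paper's proof: Lemma \ref{lemmablowup1} and Proposition \ref{p:eigenvalues} give $\gamma(\gamma+n-2s)=m_0(m_0+n-2s)$, and the lower bound \eqref{eq:gamma-ge} selects the root $m_0$ by injectivity of $t\mapsto t(t+n-2s)$ on $[-\frac{n-2s}{2},+\infty)$. Your alternative energy-integrability route is also valid but not needed; to make it complete, note that for the excluded root $\gamma\neq0$, so the identity $\nabla\bar w\cdot z=\gamma\bar w$ makes the angular factor of the energy nonzero.
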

\begin{proof}
  By Lemma \ref{lemmablowup1}, $\gamma(\gamma+n-2s)$ is an eigenvalue
  of \eqref{prob-eigenvalues} in the sense of Definition
  \ref{defdiautofunzione}.
Hence, by Proposition \ref{p:eigenvalues}, there exists
$m_0\in\mathbb{N}$ (which is even if $n=1$) such that
\begin{equation*}
  \gamma(\gamma+n-2s)=m_0(m_0+n-2s).
\end{equation*}
The conclusion follows from \eqref{eq:gamma-ge} and the fact that the
function $t\mapsto t(t+n-2s)$  is injective on $\big[-\frac{n-2s}2,+\infty\big)$.
\end{proof}
The next step is to show that
$\lim_{r\to 0^+}r^{-2\gamma} H(r)=\lim_{r\to 0^+}r^{-2m_0}$ is
strictly positive (it is already known to be finite thanks to Lemma
\ref{limiteHfinito}).  To this end, we derive an asymptotic expansion
of the Fourier coefficients of $w\in H^1(B^+_{r_0}, t^{1-2s}dz)$.

Let $\{\mu_m\}_{m\in\mathbb{N}}$ be the eigenvalues of problem
  \eqref{prob-eigenvalues}, explicity characterized in
  \eqref{eigenvalues} by Proposition~\ref{p:eigenvalues}.  For every
$m\in\mathbb{N}$, let $M_m$ be the multiplicity of the eigenvalue
$\mu_m$. By the Spectral Theorem, there exists an orthonormal
  basis $\{Y_{m,k}\}_{m\in\mathbb{N},\,k=1,2,\dots,M_m}$ of
  \begin{equation*}
  L_{\rm even}^2(\mathbb{S}^+,\theta_{n+1}^{1-2s}dS):=\\
  \{\Psi \in
  L^2(\mathbb{S}^+,\theta_{n+1}^{1-2s}dS):\Psi(\theta',\theta_n,\theta_{n+1})
  =\Psi(\theta',-\theta_n,\theta_{n+1})\},
\end{equation*}
such that, for every $m\in\N$, $\{Y_{m,k}\}_{k=1,2,\dots,M_m}$ is
a basis of the eigenspace associated with the eigenvalue $\mu_m$.

For every $m\in\N$, $k=1,2,\dots,M_m$, and $\lambda\in (0,r_0]$, we
define
\begin{equation}\label{coefdiF}
  \varphi_{m,k}(\lambda):=
  \int_{\mathbb{S}^+} \theta_{n+1}^{1-2s}w(\lambda\theta) Y_{m,k}(\theta)\,dS.
\end{equation}
For future reference, we also introduce, for every $m\in\N$,
$k=1,2,\dots,M_m$, and $\lambda\in (0,r_0)$,
\begin{align}\label{Upsilon}
  &\Upsilon_{m,k}(\lambda):= -  \int_{B^+_\lambda}
                           t^{1-2s}(A-\mathrm{Id}_{n+1})
                           \nabla w\cdot \frac{\nabla _{\mathbb{S}} Y_{m,k}
                           \big(\tfrac{z}{|z|}\big)}{|z|}
                           \,dz\\
  \notag &\ -\int_{B^+_\lambda} t^{1-2s}\big(\widetilde{\mathbf{b}}(x)
           \cdot\nabla_x w + \tilde{a}(x)w\big)Y_{m,k}
           \big(\tfrac{z}{|z|}\big)\,dz\\
  \notag &\ + \int_{\partial ^+ B^+_\lambda} t^{1-2s}(A-\mathrm{Id}_{n+1})
           \nabla w\cdot \frac{z}{|z|}
           Y_{m,k}\big(\tfrac{z}{|z|}\big)\,dS+\int_{B'_\lambda}
           \tilde{h}(x)\mathop{\rm Tr}w(x)\,
           \mathop{\rm Tr}Y_{m,k}\big(\tfrac{x}{|x|}\big)\,dx.
\end{align}
It is straightforward to verify that $\Upsilon_{m,k}\in L^1(0,r_0)$.
\begin{proposition}
  Let $m_0\in\mathbb{N}$ be given by Corollary \ref{cor} and
  $\varepsilon>0$ by \eqref{vareps}.  Let $\varphi_{m_0,k}$ be defined in
  \eqref{coefdiF} for every $k=1,2,\dots,M_{m_0}$. Then, for every $r\in(0,r_0]$,
    \begin{multline}\label{asintoticacoeff}
      \varphi_{m_0,k}(\lambda)= \lambda^{m_0}
      \left(\frac{\varphi_{m_0,k}(r)}{r^{m_0}}+
        \frac{m_0 r^{-2m_0-n+2s}}{2m_0+n-2s}\int_0^r
        \tau^{m_0-1}\Upsilon_{m_0,k}(\tau)d\tau\right) \\
      +\frac{m_0+n-2s}{2m_0+n-2s}\lambda^{m_0}
      \int_\lambda^r \tau^{-m_0-n-1+2s}\Upsilon_{m_0,k}(\tau)d\tau
      + O(\lambda^{m_0+\varepsilon})
    \end{multline}
    as $\lambda\to 0^+$.
    \end{proposition}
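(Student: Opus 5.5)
The approach is the standard Fourier-coefficient ODE argument (as in \cite{FalFel14,DelFelVit22}): test the equation for $w$ against $\varphi(|z|)Y_{m_0,k}(z/|z|)$, derive a second-order ODE for $\varphi_{m_0,k}$ with a distributional source term given by $\Upsilon_{m_0,k}$, and integrate it by variation of constants.

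\emph{Step 1: the ODE.} For $\phi\in C^\infty_c(0,r_0)$, I will take $\Phi(z)=\phi(|z|)|z|^{-1}\cdots$ — more simply, test the weak formulation of \eqref{problemadiWtilde} against $\phi(|z|)Y_{m_0,k}(z/|z|)$. I split $A=\mathrm{Id}_{n+1}+(A-\mathrm{Id}_{n+1})$ and use polar coordinates together with \eqref{eq-egienvlulues}. This gives
\begin{equation*}
-\big(\lambda^{n+1-2s}\varphi_{m_0,k}'\big)'+\mu_{m_0}\lambda^{n-1-2s}\varphi_{m_0,k}=\lambda^{n-2s}\zeta_{m_0,k}(\lambda)
\end{equation*}
in the distributional sense, where, after integrating the terms containing $(A-\mathrm{Id})$ and the drift and potential terms in $\lambda$, one finds $\zeta_{m_0,k}$ with $\int_0^\lambda\tau^{n-2s}\zeta_{m_0,k}=\Upsilon_{m_0,k}(\lambda)$. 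Equivalently, $\lambda^{n+1-2s}\varphi_{m_0,k}'(\lambda)$ minus $\mu_{m_0}\int_0^\lambda\tau^{n-1-2s}\varphi_{m_0,k}$ equals $\Upsilon_{m_0,k}(\lambda)$ up to sign. The boundary term on $\partial^+B^+_\lambda$ and the $\tilde h$ term in \eqref{Upsilon} are exactly what arise when one integrates over $B^+_\lambda$ instead of using a compactly supported radial cutoff. I expect the bookkeeping here to be routine but sign-sensitive.

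\emph{Step 2: variation of constants.} The homogeneous solutions are $\lambda^{m_0}$ and $\lambda^{-(m_0+n-2s)}$, since $\mu_{m_0}=m_0(m_0+n-2s)$. I will write the solution with Wronskian $\propto(2m_0+n-2s)$ and integrate by parts once, so that only $\Upsilon_{m_0,k}$, and not its derivative, appears. This yields
\begin{multline*}
\varphi_{m_0,k}(\lambda)=\lambda^{m_0}\Big(c_1+\tfrac{m_0+n-2s}{2m_0+n-2s}\int_\lambda^r\tau^{-m_0-n-1+2s}\Upsilon_{m_0,k}\,d\tau\Big)\\
+\lambda^{-m_0-n+2s}\Big(c_2+\tfrac{m_0}{2m_0+n-2s}\int_\lambda^r\tau^{m_0-1}\Upsilon_{m_0,k}\,d\tau\Big).
\end{multline*}

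\emph{Step 3: estimate $\Upsilon$ and fix the constants.} The key estimate is $\Upsilon_{m_0,k}(\lambda)=O(\lambda^{n-2s+m_0+\varepsilon})$. I will obtain it using $|A-\mathrm{Id}|=O(|z|)$ from \eqref{Aperturbazione}, the boundedness of $\tilde{\mathbf b}$ and $\tilde a$, \eqref{importante}, \eqref{importante1}, \eqref{merg2} and H\"older for the $\tilde h$ term, which produce the factor $\lambda^{\frac{2sp-n}{p}}$. The boundary integral I will handle through $\int_{\partial^+B^+_\lambda}t^{1-2s}|\nabla w|^2$ together with \eqref{stimaterminedibordoconA}-type bounds, or better by averaging in $\lambda$. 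All of these factor through $\lambda^{n-2s}(E+\frac{n-2s}2H)=O(\lambda^{n-2s}H)=O(\lambda^{n-2s+2m_0})$ via \eqref{Hminore} and \eqref{Nmindicost}; the square roots give $\lambda^{n-2s}\sqrt{H}\cdot\lambda^{\varepsilon}\sqrt{\cdot}$, that is, $O(\lambda^{n-2s+m_0+\varepsilon})$, since $\sqrt{H}\le C\lambda^{m_0}$.

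Given this estimate, $\int_0^r\tau^{m_0-1}\Upsilon$ converges. The bound $|\varphi_{m_0,k}(\lambda)|\le C\sqrt{H(\lambda)}=O(\lambda^{m_0})$ then forces the coefficient of $\lambda^{-m_0-n+2s}$ to vanish as $\lambda\to0$, namely $c_2+\frac{m_0}{2m_0+n-2s}\int_0^r\tau^{m_0-1}\Upsilon=0$. This leaves the tail $\lambda^{-m_0-n+2s}\int_0^\lambda\tau^{m_0-1}\Upsilon=O(\lambda^{m_0+\varepsilon})$. Evaluating at $\lambda=r$ determines $c_1$, which gives \eqref{asintoticacoeff}.

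The main obstacle is the pointwise bound on the boundary term in $\Upsilon$, since the surface gradient integral is only controlled a.e. and in integrated form. If a pointwise bound fails, I would first try to keep that term as it stands and absorb it via Fubini into the $\tau$-integrals, so that only integrated bounds are needed.
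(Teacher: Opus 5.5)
Your argument is correct and follows the paper's skeleton. Testing with $\phi(|z|)Y_{m_0,k}(z/|z|)$ instead of $\phi(|z|)|z|^{-n-1+2s}Y_{m_0,k}(z/|z|)$ is just a reparametrization of $\phi$. Your variation-of-constants formula is the paper's \eqref{lausopercontraddire} with $c_2=\frac{m_0}{2m_0+n-2s}c_{m_0,k}(r)$. Two points deserve comment.

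\textbf{The estimate on $\Upsilon_{m_0,k}$.} Your pointwise bound $\Upsilon_{m_0,k}(\lambda)=O(\lambda^{n-2s+m_0+\varepsilon})$ does hold for the two volume terms and the $\tilde h$ term; your computation is the paper's \eqref{put1}, \eqref{put2}, \eqref{put4}. It does not hold, with the tools you list, for the term on $\partial^+B^+_\lambda$. Indeed \eqref{stimaterminedibordoconA} bounds the surface energy only by $rD(r)+O(1)(E+\frac{n-2s}2H)$, and $D$ is merely integrable in $r$. So your Fubini fallback is not optional. It is exactly the paper's \eqref{put3} (coarea plus one integration by parts in $\tau$), which leads to \eqref{stimaUpsilon}: $\int_0^r\tau^{-m_0-n-1+2s}|\Upsilon_{m_0,k}|\,d\tau\le Cr^{\varepsilon}$. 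This integrated bound is all your Step 3 needs. For $\tau\le\lambda$ one has $\tau^{m_0-1}\le\lambda^{2m_0+n-2s}\tau^{-m_0-n-1+2s}$. This gives convergence of $\int_0^r\tau^{m_0-1}\Upsilon_{m_0,k}\,d\tau$ and the tail bound $O(\lambda^{m_0+\varepsilon})$. It also shows that the $\lambda^{m_0}$-part of your formula is $O(\lambda^{m_0})$.

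\textbf{Killing the singular coefficient.} Here your route differs from the paper's. You use $|\varphi_{m_0,k}(\lambda)|\le 2\sqrt{H(\lambda)}\le C\lambda^{m_0}$, which follows from Cauchy--Schwarz, \eqref{1sumulimitata}, \eqref{Hminore} and $\gamma=m_0$. The paper instead argues by contradiction. A nonzero coefficient would give $\varphi_{m_0,k}\sim c\,\lambda^{-m_0-n+2s}$, and via Parseval this is incompatible with the finiteness of $\int t^{1-2s}w^2|z|^{-2}\,dz$ (Lemma \ref{lemmahardytype}). The paper's version uses only $w\in H^1$. Yours is more direct and costs nothing extra, since \eqref{Hminore} is already needed to bound $\Upsilon_{m_0,k}$.
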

\begin{proof}
  Let $1\leq k\leq M_{m_0}$.  Testing \eqref{problemadiWtilde} by
  $\phi(|z|) |z|^{-n-1+2s} Y_{m_0,k}\big(\frac{z}{|z|}\big)$ for any
  $\phi\in \mathcal{D}(0,r_0)$, and recalling that $Y_{m_0,k}$
  satisfies \eqref{eq-egienvlulues} with
  $\mu=\mu_{m_0}=m_0(m_0+n-2s)$, we obtain that $\varphi_{m_0,k}$
  satisfies
  \begin{equation}\label{primaeqcoeff}
    - \varphi_{m_0,k}''-\frac{n+1-2s}{\lambda}\varphi_{m_0,k}' + 
    \frac{\mu_{m_0}}{\lambda^2}\varphi_{m_0,k}=Z_{m_0,k}, 
    \end{equation}
    in a distributional sense on $(0,r_0)$, where $Z_{m_0,k}$ is the
    distribution on $(0,r_0)$ defined as 
\begin{align*}
           \phantom{a}_{\mathcal{D}'(0,r_0)}\langle Z_{m_0,k},
        \phi\rangle_{\mathcal{D}(0,r_0)}&:= -\int_{B^+_{r_0}}
        t^{1-2s}(A-\mathrm{Id}_{n+1})\nabla w\cdot
        \nabla\left(\phi(|z|)|z|^{-n-1+2s} Y_{m_0,k}\big(
                          \tfrac{z}{|z|}\big)\right)\,dz\\
                        &\quad  -\int_{B^+_{r_0}} t^{1-2s}
                          (\widetilde{\mathbf{b}}\cdot\nabla_x w +
                          \tilde{a}w) \phi(|z|)|z|^{-n-1+2s}
                          Y_{m_0,k}\big(\tfrac{z}{|z|}\big)\,dz\\
           &\quad +
                          \int_{B_{r_0}'}\tilde{h}(x) \mathop{\rm
                          Tr}w(x)\mathop{\rm
                          Tr}Y_{m_0,k}\big(\tfrac{x}{|x|}\big)
                          \phi(|x|) |x|^{-n-1+2s}\,dx
     \end{align*}
  for every $\phi\in \mathcal{D}(0,r_0)$, i.e.
  \begin{align}\label{eq:carZ}
   &\phantom{a}_{\mathcal{D}'(0,r_0)}\langle Z_{m_0,k},
      \phi\rangle_{\mathcal{D}(0,r_0)}\\
    \notag&\quad \quad :=
      -\int_0^{r_0}(r^{-n-1+2s}\phi)'(r)\left(
      \int_{\partial^+B^+_{r}}
      t^{1-2s}(A-\mathrm{Id}_{n+1})\nabla w\cdot\frac{z}{|z|}Y_{m_0,k}\big(
      \tfrac{z}{|z|}\big)\,dS\right)\,dr\\
    \notag&\quad\quad  \quad 
      -\int_0^{r_0}r^{-n-1+2s}\phi(r)\left(
      \int_{\partial^+B^+_{r}}
      t^{1-2s}(A-\mathrm{Id}_{n+1})\nabla
      w\cdot
      \frac{\nabla _{\mathbb{S}}Y_{m_0,k}}{|z|}\,dS\right)\,dr\\
    \notag&\quad\quad\quad   -\int_0^{r_0}
      \left(\int_{\partial^+B^+_{r}}t^{1-2s}(\widetilde{\mathbf{b}}\cdot\nabla_x w +
      \tilde{a}w)
      Y_{m_0,k}\big(
      \tfrac{z}{|z|}\big)\,dS
      \right) r^{-n-1+2s}\phi(r)\,dr\\
    \notag&\quad\quad\quad   +\int_0^{r_0}
      \left(\int_{S_r}\tilde{h}(x) \mathop{\rm
      Tr}w(x)\mathop{\rm
      Tr}Y_{m_0,k}\big(\tfrac{x}{|x|}\big)
      \,dS'
      \right) r^{-n-1+2s}\phi(r)\,dr
  \end{align}
  for every $\phi\in
  \mathcal{D}(0,r_0)$. From \eqref{eq:carZ} it directly follows that,
in a distributional sense on $(0,r_0)$,
    \begin{equation*}\label{directcomp}
      Z_{m_0,k}(\lambda)=\lambda^{-n-1+2s}\Upsilon_{m_0,k}'(\lambda),
    \end{equation*}
    being $\Upsilon_{m_0,k}$ defined as in \eqref{Upsilon}.  This
    allows us to rewrite \eqref{primaeqcoeff} as
    \begin{equation*}
      -(\lambda^{2m_0+n+1-2s}(\lambda^{-m_0}\varphi_{m_0,k}(\lambda))')'=
      \lambda^{m_0}\Upsilon_{m_0,k}'(\lambda).
    \end{equation*}
    For any $r\in (0,r_0]$, an integration over $(\lambda,r)$ yields
    \begin{align*}
       (\lambda^{-m_0}\varphi_{m_0,k}(\lambda))'
       & =-\lambda^{-m_0-n-1+2s}\Upsilon_{m_0,k}(\lambda) \\
        &\quad-m_0\lambda^{-2m_0-n-1+2s}\bigg(c_{m_0,k}(r)+
          \int_\lambda^r\tau^{m_0-1}\Upsilon_{m_0,k}(\tau)\,d\tau\bigg)
    \end{align*}
    for some constant $c_{m_0,k}(r)\in\R$ depending only on  $m_0$, $k$ and $r$.
Integrating again over $(\lambda,r)$, we obtain 
\begin{align}\label{lausopercontraddire}
  \varphi_{m_0,k}(\lambda) =\,
  &\lambda^{m_0}\frac{\varphi_{m_0,k}(r)}{r^{m_0}}
    - \lambda^{m_0} c_{m_0,k}(r)\frac{m_0 r^{-2m_0-n+2s}}{2m_0+n-2s}\\
  \notag  &+ m_0
            \frac{\lambda^{-m_0-n+2s}}{2m_0+n-2s}\left(c_{m_0,k}(r)
            +\int_\lambda^r \tau^{m_0-1}\Upsilon_{m_0,k}(\tau)\,d\tau\right)\\
  \notag&+ \frac{\lambda^{m_0}(m_0+n-2s)}{2m_0+n-2s}\int_\lambda^r
          \tau^{-m_0-n-1+2s}\Upsilon_{m_0,k}(\tau)\,d\tau.
\end{align}
We claim that 
\begin{equation}\label{show}
    \int_0^{r_0} \tau^{-m_0-n-1+2s}|\Upsilon_{m_0,k}(\tau)|\,d\tau< +\infty.
\end{equation}
To prove \eqref{show}, we first notice that 
\begin{align}\label{put1}
  &\tau^{-m_0-n-1+2s}\biggl|\int_{B^+_\tau}
                      t^{1-2s}(A-\mathrm{Id}_{n+1})\nabla w
                      \cdot \frac{\nabla _{\mathbb{S}}
                      Y_{m_0,k}
                      \big(\frac{z}{|z|}\big)}{|z|} \,dz\biggr|\\
  \notag     &\quad=O(\tau^{-m_0-n+2s})\left(\int_{B^+_\tau}
               t^{1-2s}|\nabla w|^2\,dz
               \right)^{1/2}\left(\int_{B^+_\tau} t^{1-2s}
               \frac{|\nabla _{\mathbb{S}}
               Y_{m_0,k}(\frac{z}{|z|})|^2}{|z|^2}\,dz\right)^{1/2}\\
  \notag     &\quad=O(\tau^{-m_0})\sqrt{H(\tau)}\left(\int_{B^+_1}
               t^{1-2s}|\nabla w^\tau|^2\,dz
               \right)^{1/2}\left(\int_{B^+_1} t^{1-2s} \frac{|\nabla
               _{\mathbb{S}}
               Y_{m_0,k}(\frac{z}{|z|})|^2}{|z|^2}\,dz\right)^{1/2}\\
  \notag&\quad=O(1) \quad \text{as $\tau \to 0^+$},
\end{align}
where we used \eqref{Aperturbazione}, H\"{o}lder's inequality,
\eqref{gradlimitato}, \eqref{Hminore}, and Lemma
\ref{lafamiglialimitata},
as well as the facts that $n>2s$ and
  $Y_{m_0,k} \in H^1(\mathbb{S}^+,\theta_{n+1}^{1-2s}dS)$.

Furthermore, by \eqref{nuovaregolarita}, H\"{o}lder's inequality, the
fact that $Y_{m_0,k} \in L^2(\mathbb{S}^+,\theta_{n+1}^{1-2s}dS)$,
\eqref{gradlimitato}, Lemma \ref{lafamiglialimitata}, and
\eqref{Hminore}, 
\begin{align}\label{put2}
  &\tau^{-m_0-n-1+2s}\biggl|\int_{B^+_\tau} t^{1-2s}
  \big(\widetilde{\mathbf{b}}(x)\cdot\nabla_x w +\tilde{a}(x)w\big)
    Y_{m_0,k}\big(\tfrac{z}{|z|}\big)\,dz\biggr|\\
                                         \notag
                                         &\quad=\,O(\tau^{-m_0-n-1+2s})\left(\int_{B^+_\tau}t^{1-2s}\left|Y_{m_0,k}\big(\tfrac{z}{|z|}\big)\right|^2dz\right)^{1/2}\times\\
                                         \notag &\quad\quad \quad
                                                  \times\left(
                                                  \left(\int_{B^+_\tau}t^{1-2s}
                                                  |\nabla
                                                  w|^2\,dz\right)^{1/2}
                                                  +
                                                  \left(\int_{B^+_\tau}t^{1-2s}|w|^2\,dz
                                                  \right)^{1/2}\right)\\
                                         \notag
                                         &\quad=O(\tau^{-m_0})\sqrt{H(\tau)}\left(
                                           \left(\int_{B^+_1}t^{1-2s}
                                           |\nabla
                                           w^\tau|^2\,dz\right)^{1/2}
                                           +
                                           \tau\left(\int_{B^+_1}t^{1-2s}
                                           |w^\tau|^2\,dz\right)^{1/2}\right)\\
                                         \notag &\quad=O(1)\quad
                                                  \text{as
                                                  $\tau \to 0^+$.}
\end{align}
By \eqref{Aperturbazione}, an integration by parts, H\"{o}lder's
inequality, Lemma \ref{lafamiglialimitata} and
\eqref{Hminore}, we have that, for every  $r\in (0,r_0]$,
\begin{align}\label{put3}
          \int_0^r & \tau^{-m_0-n-1+2s}\biggl|\int_{\partial ^+
                     B^+_\tau} t^{1-2s}
                     (A-\mathrm{Id}_{n+1})\nabla w\cdot \frac{z}{|z|}
                     Y_{m_0,k}\big(\tfrac{z}{|z|}\big)\,dS\biggr|\,d\tau\\
                   &\notag \leq \mathop{\rm const} \int_0^r
                     \tau^{-m_0-n+2s}\left(\int_{\partial ^+
                     B^+_\tau}t^{1-2s}|\nabla w|\big|
                     Y_{m_0,k}\big(\tfrac{z}{|z|}\big)\big|\,dS\right) d\tau\\
        &\notag =\mathop{\rm const}  \biggl( r^{-m_0-n+2s}\int_{B^+_r}
          t^{1-2s}
          |\nabla w|\big|Y_{m_0,k}\big(\tfrac{z}{|z|}\big)\big|\,dz\\
        &\notag \quad+(m_0+n-2s)\int_0^r \tau^{-m_0-n-1+2s}
          \left(\int_{B^+_\tau}
          t^{1-2s}|\nabla
          w|\big|Y_{m_0,k}\big(\tfrac{z}{|z|}\big)\big|\,dz
          \right)\,d\tau\biggr)\\
        &\notag \leq \mathop{\rm const} \left(
          r^{-m_0+1}\sqrt{H(\tau)} + \int_0^r
          \tau^{-m_0}\sqrt{H(\tau)}\,d\tau\right)\leq   \mathop{\rm const} \, r,
\end{align}
for some constants $\mathop{\rm const} >0$ that may vary from
  line to line and are independent of $r$.

Finally, after a change of variables, we apply H\"{o}lder's inequality
to $\tilde{h}(\tau\cdot)\in L^{p}(B'_1)$ and
$\mathop{\rm Tr}w^\tau,\mathop{\rm Tr}Y_{m_0,k}\in
L^{2^\ast_s}(B'_1)$; then, in view of \eqref{Hminore}, Lemma
\ref{lemmafallfelli}, \eqref{1sumulimitata}, \eqref{intlambda1}, and
Lemma \ref{lafamiglialimitata}, we obtain that
\begin{align}\label{put4}
  &     \tau^{-m_0-n-1+2s}\biggl|\int_{B'_\tau}
    \tilde{h}(x)\mathop{\rm Tr}w\,
    \mathop{\rm Tr}Y_{m_0,k}\big(\tfrac{x}{|x|}\big)\,dx\biggr|\\
  \notag &=\tau^{-m_0-1+2s} \sqrt{H(\tau)} \biggl|\int_{B'_1}
           \tilde{h}(\tau x)
           \mathop{\rm Tr}w^\tau(x)\, \mathop{\rm Tr}Y_{m_0,k}\big(\tfrac{x}{|x|}\big)\,dx\biggr|\\
  \notag&\leq\frac{\mathop{\rm const}}{\tau^{1-2s}} \Vert
          \tilde{h}(\tau\cdot)\Vert_{L^{p}(B'_1)}
          \left(2(n-2s)\int_{\mathbb{S}^+}\theta_{n+1}^{1-2s}\mu(\tau\cdot)
          |w^{\tau}|^2\,dS+\int_{B^+_1} t^{1-2s}|\nabla w^{\tau}|^2\,dz\right)^{\frac{1}{2}} \\
  \notag&= \mathop{\rm const}\tau^{-1+2s-\frac{n}{p}} \Vert
          \tilde{h}
          \Vert_{L^{p}(B'_{\tau})} \bigg(2(n-2s)+\int_{B^+_1}
          t^{1-2s}|\nabla w^{\tau}|^2\,dz
          \bigg)^{\!\frac{1}{2}}
          \leq\mathop{\rm const}\tau^{-1+\frac{2sp-n}{p}}.
\end{align}
Combining \eqref{put1}, \eqref{put2}, \eqref{put3}, and\eqref{put4} yields, for
every $r\in (0,r_0]$,
\begin{equation}\label{stimaUpsilon}
  \int_0^r \tau^{-m_0-n-1+2s}|\Upsilon_{m_0,k}(\tau)|\,d\tau \leq
  \mathop{\rm const}
  \left(r+\int_0^r \tau^{-1+\frac{2sp-n}{p}}\,d\tau \right)\leq \mathop{\rm const} r^{\varepsilon},
\end{equation}
for some constants $\mathop{\rm const} >0$ independent of $r$,
  where $\e$ is defined in \eqref{vareps}. In particular, estimate
\eqref{stimaUpsilon} implies \eqref{show}.

We next prove that, for every $r\in (0,r_0]$,
\begin{equation}\label{zero}
    c_{m_0,k}(r) + \int_0^r \tau^{m_0-1} \Upsilon_{m_0,k}(\tau)\,d\tau = 0.
  \end{equation}
 To
  this aim, we argue by contradiction: if \eqref{zero} is not true,
  then \eqref{lausopercontraddire} and \eqref{show} imply that 
 \begin{equation*} \varphi_{m_0,k}(\lambda) \sim m_0
      \frac{\lambda^{-m_0-n+2s}}{2m_0+n-2s}\left(c_{m_0,k}(r)
        +\int_0^r \tau^{m_0-1}\Upsilon_{m_0,k}(\tau)\,d\tau\right)
\end{equation*}
as $\lambda\to 0^+$. This, combined with the Parseval identity, yields  
\begin{align*}
  \int_{B^+_{r_0}}t^{1-2s}\frac{w^2}{|z|^2}\,dz
  &=\int_0^{r_0}\lambda^{n-1-2s} \left(\int_{\mathbb{S}^+}
    \theta_{n+1}^{1-2s}|w(\lambda\theta)|^2\,dS\right) d\lambda\\
  &\geq \int_0^{r_0}\lambda^{n-1-2s}|\varphi_{m_0,k}(\lambda)|^2\,d\lambda 
    =+\infty,
\end{align*}
thus giving rise to a contradiction, because the left-hand side is
finite as a result of \cite[Lemma 2.4]{FalFel14}, see also Lemma
  \ref{lemmahardytype}.  Claim \eqref{zero} is thereby proved.

To complete the proof, in view of \eqref{lausopercontraddire} and
\eqref{zero} it is enough to observe that
\begin{align*}
 \lambda^{-m_0-n+2s}\biggl|c_{m_0,k}(r)+\int_\lambda^r
                      \tau^{m_0-1}\Upsilon_{m_0,k}(\tau)\,d\tau\biggr|
  &=
                      \lambda^{-m_0-n+2s}\int_0^\lambda
    \tau^{m_0-1}\Upsilon_{m_0,k}(\tau)\,d\tau\\
&  =O(\lambda^{m_0+\varepsilon})\quad \text{as $\lambda\to 0^+$},
\end{align*}
as a consequence of \eqref{stimaUpsilon}.
\end{proof}
The following result finally allows us to identify the exact order of
vanishing of $H(\lambda)$ as $\lambda\to 0^+$.
\begin{proposition}\label{lim>0}  Let $m_0\in\mathbb{N}$ be as in Corollary \ref{cor}. Then 
\begin{equation*}
    \lim_{\lambda\to 0^+}\frac{H(\lambda)}{\lambda^{2 m_0}}>0.
\end{equation*} 
\end{proposition}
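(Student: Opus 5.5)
We argue by contradiction, following the standard scheme of \cite{DelFelSic23,FalFel14}. Suppose $\lim_{\lambda\to0^+}\lambda^{-2m_0}H(\lambda)=0$; the limit exists and is finite by Lemma \ref{limiteHfinito} and Corollary \ref{cor}. The goal is to show that every Fourier coefficient $\varphi_{m_0,k}(\lambda)$, $k=1,\dots,M_{m_0}$, is then $O(\lambda^{m_0+\varepsilon})$. This will contradict the blow-up result of Lemma \ref{lemmablowup1}.

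\emph{First step: the coefficients vanish faster than $\lambda^{m_0}$.} By \eqref{sviluppomu} and Cauchy--Schwarz, $|\varphi_{m_0,k}(\lambda)|\le C\sqrt{H(\lambda)}$, so $\lambda^{-m_0}\varphi_{m_0,k}(\lambda)\to0$. We divide \eqref{asintoticacoeff} by $\lambda^{m_0}$ and let $\lambda\to0^+$. By \eqref{show}, the integral $\int_\lambda^r\tau^{-m_0-n-1+2s}\Upsilon_{m_0,k}\,d\tau$ converges to $\int_0^r$. Hence
\[
\frac{\varphi_{m_0,k}(r)}{r^{m_0}}+\frac{m_0r^{-2m_0-n+2s}}{2m_0+n-2s}\int_0^r\tau^{m_0-1}\Upsilon_{m_0,k}\,d\tau+\frac{m_0+n-2s}{2m_0+n-2s}\int_0^r\tau^{-m_0-n-1+2s}\Upsilon_{m_0,k}\,d\tau=0 .
\]
We substitute this back into \eqref{asintoticacoeff} and obtain
\[
\varphi_{m_0,k}(\lambda)=-\frac{m_0+n-2s}{2m_0+n-2s}\lambda^{m_0}\int_0^\lambda\tau^{-m_0-n-1+2s}\Upsilon_{m_0,k}(\tau)\,d\tau+O(\lambda^{m_0+\varepsilon}),
\]
which is $O(\lambda^{m_0+\varepsilon})$ by \eqref{stimaUpsilon}.

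\emph{Second step: contradiction with the blow-up.} Combining with \eqref{Hmag} for some $\delta\in(0,\varepsilon)$ gives $\sqrt{H(\lambda)}\ge \sqrt{K_2}\lambda^{m_0+\delta/2}$. Therefore
\[
\int_{\mathbb S^+}\theta_{n+1}^{1-2s}w^\lambda Y_{m_0,k}\,dS=\frac{\varphi_{m_0,k}(\lambda)}{\sqrt{H(\lambda)}}=O(\lambda^{\varepsilon-\delta/2})\to0 .
\]
Now take any sequence $\lambda_\ell\to0^+$. Lemma \ref{lemmablowup1} and \eqref{convfortetracce} give a subsequence along which $w^{\lambda_{\ell_k}}\to\bar w$ in $L^2(\mathbb S^+,\theta_{n+1}^{1-2s}dS)$. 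Its limit $\psi=\bar w|_{\mathbb S^+}$ is a normalized eigenfunction associated with $\mu_{m_0}$, and it lies in the even subspace. So $\psi$ belongs to the span of $\{Y_{m_0,k}\}_{k}$, and $\sum_k(\int\theta_{n+1}^{1-2s}\psi Y_{m_0,k})^2=1$. But passing to the limit in the display above forces all these projections to vanish, which is a contradiction.

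\emph{Expected obstacle.} The main point to watch is that the remainder is genuinely $o(\sqrt{H(\lambda)})$. This requires a strict gap between the exponent $\varepsilon$ in \eqref{stimaUpsilon} and the loss $\delta$ allowed in \eqref{Hmag}, and \eqref{Hmag} holds for arbitrarily small $\delta$, so the gap can be arranged. One must also check that the constant term actually cancels. This follows from the hypothesis $H(\lambda)=o(\lambda^{2m_0})$ together with the finiteness \eqref{show}, which justifies passing to the limit in the integral.
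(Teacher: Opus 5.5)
Your proposal is correct and follows the paper's proof essentially step by step: the contradiction hypothesis gives $\varphi_{m_0,k}(\lambda)=o(\lambda^{m_0})$, which via \eqref{asintoticacoeff}, \eqref{show} and \eqref{stimaUpsilon} upgrades to $\varphi_{m_0,k}(\lambda)=O(\lambda^{m_0+\varepsilon})$, and then \eqref{Hmag} together with the blow-up of Lemma \ref{lemmablowup1} yields the contradiction. The only differences are cosmetic: you bound $\varphi_{m_0,k}$ by Cauchy--Schwarz where the paper uses Parseval, and you take $\delta<\varepsilon$ in \eqref{Hmag} where the paper takes $\delta=\varepsilon$ (in fact any $\delta<2\varepsilon$ works, so no strict gap with $\varepsilon$ is needed).
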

\begin{proof}
 By  \eqref{coefdiF}, the Parseval identity, \eqref{sviluppomu}, and \eqref{H}, we have that
    \begin{align*}
      \sum_{m=0}^\infty \sum_{k=1}^{M_m} |\varphi_{m,k}(\lambda)|^2&=
                                                                     \int_{\mathbb{S}^+} \theta^{1-2s}_{n+1}      |w(\lambda\theta)|^2\,dS
                                                                     =(1+O(\lambda)) \int_{\mathbb{S}^+} \theta^{1-2s}_{n+1}     \mu(\lambda\theta)   |w(\lambda\theta)|^2\,dS\\
                                                                   &=
                                                                     (1+O(\lambda)) H(\lambda)\quad\text{as }\lambda\to0^+.
    \end{align*}
  If we assume by contradiction that $\lim_{\lambda \to
    0^+}\frac{H(\lambda)}{\lambda^{2m_0}}=0$, this implies
\begin{equation*}
    \lim_{\lambda\to 0^+}\frac{\varphi_{m_0,k}(\lambda)}{\lambda^{m_0}}=0 \quad \text{for every $k=1,2,\dots,M_{m_0}$}, 
\end{equation*}
which, in view of  \eqref{asintoticacoeff} and \eqref{stimaUpsilon},
yields
\begin{align*}
  \frac{\varphi_{m_0,k}(r)}{r^{m_0}}+& \frac{m_0
                                       r^{-2m_0-n+2s}}{2m_0+n-2s}\int_0^r
                                       \tau^{m_0-1}\Upsilon_{m_0,k}(\tau)d\tau \\
  \notag& + \frac{m_0+n-2s}{2m_0+n-2s}\int_0^r \tau^{-m_0-n-1+2s}\Upsilon_{m_0,k}(\tau)d\tau=0
\end{align*}
for every $k=1,2,\dots,M_{m_0}$ and $r\in (0,r_0]$.  Plugging the above
identity into \eqref{asintoticacoeff} yields, for every
$k=1,2,\dots,M_{m_0}$,
\begin{equation*}
  \varphi_{m_0,k}(\lambda) =
  -\frac{m_0+n-2s}{2m_0+n-2s}\lambda^{m_0}\int_0^\lambda
  \tau^{-m_0-n-1+2s}\Upsilon_{m_0,k}(\tau)d\tau + O(\lambda^{m_0+\varepsilon}) 
\end{equation*}
as $\lambda\to 0^+$. From this and \eqref{stimaUpsilon} it follows
that $\varphi_{m_0,k}(\lambda)=O(\lambda^{m_0+\varepsilon}) $ as
$\lambda\to 0^+$, i.e., by
\eqref{coefdiF}, 
\begin{equation*}
     \int_{\mathbb{S}^+} \theta_{n+1}^{1-2s}w(\lambda\theta)
     Y_{m_0,k}(\theta)\,dS =O(\lambda^{m_0+\varepsilon})
     \quad \text{as $\lambda\to 0^+$}
\end{equation*} 
for every $k=1,2,\dots,M_{m_0}$. The above estimate, together with \eqref{wlambda} and
\eqref{Hmag} for $\delta=\varepsilon$, implies that 
\begin{equation}\label{eq:contr}
    \int_{\mathbb{S}^+} \theta_{n+1}^{1-2s}w^\lambda(\theta)
    Y_{m_0,k}(\theta)\,dS = O(\lambda^{\varepsilon/2})
    \quad \text{as $\lambda\to 0^+$},
\end{equation}
for every $k=1,2,\dots,M_{m_0}$.

By Lemma \ref{lemmablowup1} and the continuity of the trace map
  \eqref{traceoper}, for any sequence $\lambda_\ell\to 0^+$, there
  exist a subsequence $\lambda_{\ell_k}\to 0^+$ and an
  $L^2$-normalized eigenfunction $\psi$ of problem
  \eqref{prob-eigenvalues} associated with the eigenvalue $\mu_{m_0}$
  such that $w^{\lambda_{\ell_k}} \to \psi$ in
  $L^2(\mathbb{S}^+,\theta_{n+1}^{1-2s}ds)$ as $k\to\infty$.
Thus, together with \eqref{normalizzata}, we obtain that
\begin{equation*}
    \int_{\mathbb{S}^+}
    \theta_{n+1}^{1-2s}w^{\lambda_{\ell_k}}(\theta)
    \psi(\theta)\,dS\to \int_{\mathbb{S}^+} \theta_{n+1}^{1-2s}
    |\psi(\theta)|^2\,dS=1\quad
    \text{as $k\to\infty$},
  \end{equation*}
  as $\{Y_{m_0,k}\}_{k=1,2,\dots,M_{m_0}}$ is a basis for the
  eigenspace corresponding to the eigenvalue $\mu_{m_0}$.  This
    contradicts \eqref{eq:contr}, thereby completing the proof.
\end{proof}

\subsection{Local asymptotics and strong unique continuation
  principle}

In view of Proposition \ref{lim>0}, the preliminary blow-up result
  obtained in Lemma \ref{lemmablowup1} takes the following sharper
  form.
\begin{theorem}\label{thm:preparatorio}
  Let $w\in H^1(B^+_{r_0}, t^{1-2s}dz)$ be a non-trivial weak
  solution to problem \eqref{problemadiWtilde}. Let $m_0\in\mathbb{N}$
  be as in Corollary \ref{cor}, let $M_{m_0}$ be the multiplicity of
  the eigenvalue $\mu_{m_0}$, and let
  $\{Y_{m_0,k}\}_{k=1,2,\dots,M_{m_0}}$ be an associated orthonormal
  eigenbasis. Then there exists a nonzero vector
  $(\beta_1,\dots, \beta_{M_{m_0}})\in \R^{M_{m_0}}$ such that
    \begin{equation*}
        \frac{w(\lambda z)}{\lambda^{m_0}}\to |z|^{m_0}\sum
        _{k=1}^{M_{m_0}}
        \beta_k Y_{m_0,k}\bigg(\frac{z}{|z|}\bigg)
        \quad \text{in $H^1(B^+_1, t^{1-2s}dz)$  as $\lambda\to 0^+$}.
    \end{equation*}
\end{theorem}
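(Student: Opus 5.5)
Since $\gamma=m_0$ by Corollary \ref{cor} and, by Lemma \ref{limiteHfinito} and Proposition \ref{lim>0}, $L:=\lim_{\lambda\to0^+}\lambda^{-2m_0}H(\lambda)$ exists and is strictly positive, we can write
\[
\frac{w(\lambda z)}{\lambda^{m_0}}=\sqrt{\frac{H(\lambda)}{\lambda^{2m_0}}}\,w^\lambda(z),
\]
where $w^\lambda$ is defined in \eqref{wlambda}. It therefore suffices to show that $w^\lambda$ converges in $H^1(B^+_1,t^{1-2s}dz)$, as $\lambda\to0^+$ along the whole family, to a single limit $\bar w=|z|^{m_0}\sum_k b_kY_{m_0,k}(z/|z|)$. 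The theorem then follows with $\beta_k=\sqrt{L}\,b_k$. The vector $\beta$ is nonzero because $\sum_k b_k^2=1$, which is the normalization \eqref{normalizzata}.

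We use a subsequence-uniqueness argument. Take any sequence $\lambda_\ell\to0^+$. By Lemma \ref{lemmablowup1} there is a subsequence $\lambda_{\ell_k}$ with $w^{\lambda_{\ell_k}}\to\bar w$ strongly in $H^1(B^+_1,t^{1-2s}dz)$. Here $\bar w$ is homogeneous of degree $m_0$, and $\psi=\bar w|_{\mathbb S^+}$ is a normalized eigenfunction for $\mu_{m_0}$, so $\psi=\sum_k b_kY_{m_0,k}$ with
\[
b_k=\int_{\mathbb S^+}\theta_{n+1}^{1-2s}\psi Y_{m_0,k}\,dS.
\]
By the compactness of the trace \eqref{traceoper}, $w^{\lambda_{\ell_k}}|_{\mathbb S^+}\to\psi$ in $L^2(\mathbb S^+,\theta_{n+1}^{1-2s}dS)$. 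Hence
\[
b_k=\lim_{k'\to\infty}\frac{\varphi_{m_0,k}(\lambda_{\ell_{k'}})}{\sqrt{H(\lambda_{\ell_{k'}})}}
=\lim_{k'\to\infty}\frac{\varphi_{m_0,k}(\lambda_{\ell_{k'}})}{\lambda_{\ell_{k'}}^{m_0}}\cdot\frac{1}{\sqrt{L}},
\]
with $\varphi_{m_0,k}$ as in \eqref{coefdiF}.

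The key step is to show that $\lim_{\lambda\to0^+}\lambda^{-m_0}\varphi_{m_0,k}(\lambda)$ exists along the full family, independently of the sequence. This comes from the expansion \eqref{asintoticacoeff}. Fix $r=r_0$ and divide by $\lambda^{m_0}$. The first bracket is a constant. The integral $\int_\lambda^{r_0}\tau^{-m_0-n-1+2s}\Upsilon_{m_0,k}(\tau)\,d\tau$ converges as $\lambda\to0^+$ by the absolute integrability \eqref{show}. The remainder is $O(\lambda^{\varepsilon})\to0$. So the limit exists and equals
\[
\frac{\varphi_{m_0,k}(r_0)}{r_0^{m_0}}+\frac{m_0r_0^{-2m_0-n+2s}}{2m_0+n-2s}\int_0^{r_0}\tau^{m_0-1}\Upsilon_{m_0,k}\,d\tau+\frac{m_0+n-2s}{2m_0+n-2s}\int_0^{r_0}\tau^{-m_0-n-1+2s}\Upsilon_{m_0,k}\,d\tau.
\]
Consequently the coefficients $b_k$, and therefore $\bar w$, do not depend on the chosen sequence. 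Since every sequence has a subsequence converging to this same limit, $w^\lambda\to\bar w$ in $H^1(B^+_1,t^{1-2s}dz)$ as $\lambda\to0^+$. Multiplying by $\sqrt{H(\lambda)\lambda^{-2m_0}}\to\sqrt L$ gives the statement.

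The main obstacle is this identification step: showing that the blow-up limit does not depend on the subsequence. It rests entirely on the Fourier-coefficient expansion \eqref{asintoticacoeff}, together with \eqref{stimaUpsilon} and the positivity from Proposition \ref{lim>0}; without $L>0$ the normalizations $\sqrt{H}$ and $\lambda^{m_0}$ could not be compared. Everything else is routine bookkeeping of limits.
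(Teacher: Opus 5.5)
Your proposal is correct and follows essentially the same route as the paper: subsequential blow-up limits from Lemma \ref{lemmablowup1}, positivity of $\lim_{\lambda\to0^+}\lambda^{-2m_0}H(\lambda)$ from Proposition \ref{lim>0}, and identification of each $\beta_k$ as the limit of $\lambda^{-m_0}\varphi_{m_0,k}(\lambda)$, which exists along the whole family by \eqref{asintoticacoeff} and \eqref{show}. The only difference is bookkeeping: you pass explicitly through the normalized $w^\lambda$ and the factor $\sqrt{H(\lambda)\lambda^{-2m_0}}\to\sqrt{L}$, which the paper leaves implicit.
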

\begin{proof}
  By \eqref{limiteHfinito}, Lemma \ref{lemmablowup1}, and Proposition
  \ref{lim>0}, for any sequence $\lambda_\ell\to 0^+$ there exist a
  subsequence $\lambda_{\ell_j}\to 0^+$ and a nonzero vector
  $(\beta_1,\dots, \beta_{M_{m_0}})\in \R^{M_{m_0}}$ such that
    \begin{equation}\label{convergenzasottos}
        \frac{w(\lambda_{\ell_j}z)}{\lambda_{\ell_j}^{m_0}} \to
        |z|^{m_0}
        \sum_{k=1}^{M_{m_0}}\beta_k
        Y_{m_0,k}\bigg(\frac{z}{|z|}\bigg)\quad
        \text{in $H^1(B^+_1, t^{1-2s}dz)$  as $j\to\infty$}.
    \end{equation}
    To prove that the above convergence holds as $\lambda\to 0^+$, we
    will show that the real numbers $\beta_1,\dots,\beta_{M_{m_0}}$,
    and consequently the entire limit profile, depend on neither
    $\lambda_\ell$ nor $\lambda_{\ell_j}$. Indeed, from
    \eqref{asintoticacoeff} and \eqref{show} it follows that, for every
    $k=1,2,\dots, M_{m_0}$ and $r\in (0,r_0]$,
    \begin{align}\label{betak1}
      \lim_{j\to
      \infty}\frac{\varphi_{m_0,k}(\lambda_{\ell_j})}{\lambda_{\ell_j}^{m_0}}
      &=\frac{\varphi_{m_0,k}(r)}{r^{m_0}}+ \frac{m_0
        r^{-2m_0-n+2s}}{2m_0+n-2s}
        \int_0^r \tau^{m_0-1}\Upsilon_{m_0,k}(\tau)\,d\tau \\
      \notag&\quad + \frac{m_0+n-2s}{2m_0+n-2s}\int_0^r
              \tau^{-m_0-n-1+2s}
              \Upsilon_{m_0,k}(\tau)\,d\tau.
    \end{align}
    On the other hand, by the definition of $\varphi_{m_0,k}$ given in
    \eqref{coefdiF}, we have
    \begin{align}\label{betak2}
      \lim_{j\to\infty}\frac{\varphi_{m_0,k}(\lambda_{\ell_j})}{\lambda_{\ell_j}^{m_0}}
      &=\lim_{j\to \infty}\int_{\mathbb{S}^+}\theta_{n+1}^{1-2s}
        \frac{w(\lambda_{\ell_j}\theta)}{\lambda_{\ell_j}^{m_0}} Y_{m_0,k}(\theta)\,dS \\
    \notag&=
            \sum_{i=1}^{M_{m_0}}\beta_i\int_{\mathbb{S}^+}\theta_{n+1}^{1-2s}
            Y_{m_0,i}(\theta)Y_{m_0,k}(\theta)\,dS= \beta_k,
    \end{align}
    where we have used \eqref{convergenzasottos} and the continuity of
    the trace map \eqref{traceoper}. Combining \eqref{betak1} and
    \eqref{betak2}, we complete the proof.
\end{proof}
We are finally ready to prove Theorem \ref{thm:1}. 
\begin{proof}[Proof of Theorem \ref{thm:1}]
  Up to rigid motions, it is not restrictive to assume that
  $x_0=0\in\partial \Omega$ and \eqref{eq:rigid-mot},
  \eqref{eq:rigid-mot2}, and \eqref{condizionisug} are satisfied. In
  this case, the outer normal vector to $\partial\Omega$ at $x_0=0$ is
  ${\mathbf N}_0=\nu(x_0)=(0,\dots,0,1)\in\R^n$, so that, for any
  $\delta>0$, $C_\delta:=\{x\in\R^n \, : \, x_n<-\delta|x'|\}$.

  Let $v\in H^1(\mathcal{C}_\Omega,t^{1-2s}dz)$ be a nontrivial weak
  solution to problem \eqref{eqrisoltadaV}. Then the function
  $\tilde{v}(x,t):= e^{-\alpha d(x)} v(x,t)\in
  H^1(\mathcal{C}_\Omega\cap B_1,t^{1-2s}dz)$ is a nontrivial weak
  solution to \eqref{eqrisoltadaW}.

Let us fix $\delta>0$. We observe that 
\begin{equation*}
    ((C_\delta\times(0,T))\cap B_1 \subset
    \mathcal{C}_{\frac{\Omega}{\lambda}}
    \cap B_1\quad \text{for sufficiently small $\lambda>0$}. 
\end{equation*}
As a result, the function
$\tilde v_\lambda:= \tilde v(\lambda\cdot)\in
H^1((C_\delta\times(0,T))\cap B_1, t^{1-2s}dz)$ for sufficiently
small $\lambda>0$.  Let $F$ be the diffeomorphism introduced in
  Proposition \ref{p:diff-deform-boun}, $\hat{v}:= \tilde{v}\circ F$,
and let $w$ be defined as in \eqref{Wwidehat}. Then,
for every $z\in  (C_\delta\times(0,T))\cap B_1$,
\begin{equation}\label{vtildelambda}
    \tilde v_\lambda(z) = w_\lambda\left(\frac{F^{-1}(\lambda
        z)}{\lambda}\right),
\end{equation}
being $w_\lambda:= w(\lambda\cdot)$, and consequently
\begin{equation}\label{nablavtildelambda}
  \nabla \tilde v_\lambda(z)= \nabla w_\lambda
  \left(\frac{F^{-1}(\lambda z)}{\lambda}\right) J_{F^{-1}}(\lambda z). 
\end{equation}
At this point, we exploit the asymptotics in
\eqref{sviluppoF} and \eqref{sviluppoF-2} to deduce that 
\begin{equation*}
  \biggl| \frac{F^{-1}(\lambda z)}{\lambda} -z\biggr| \to 0
  \quad \text{and}\quad\Vert J_{F^{-1}}(\lambda z)  - \mathrm{Id}_{n+1}\Vert \to 0
\end{equation*}
as $\lambda\to 0^+$, uniformly with respect to $z$.  By Theorem
\ref{thm:preparatorio} and Proposition \ref{p:eigenvalues}-(ii) there
exist $m_0\in\mathbb{N}$ and a nontrivial homogeneous polynomial
$P(z)=P(x',x_n,t)$ of degree $m_0$ (even with respect to both $x_n$
and $t$) such that $\mathop{\rm div}(|t|^{1-2s}\nabla P)=0$ in
$\R^{n+1}$ and
\begin{equation}\label{together}
  \frac{w(\lambda z)}{\lambda^{m_0}}\to P(z)
  \quad \text{as $\lambda\to 0^+$ in
    $H^1(B_R^+, t^{1-2s}dz)$ for every $R>0$}.
\end{equation}
Combining \eqref{vtildelambda}, \eqref{nablavtildelambda}, and
\eqref{together}, we deduce that
\begin{equation}\label{quasi}
  \frac{\tilde v(\lambda z)}{\lambda^{m_0}}\to P(z)
  \quad \text{as $\lambda\to 0^+$ in $H^1((C_\delta\times(0,T))\cap B_1, t^{1-2s}dz)$}.
\end{equation}
To complete the proof, we observe that, since
  $0\in\partial\Omega$, $d(\lambda x)\leq |\lambda x|\leq \lambda$ for
  every $x\in B_1'$ and $\lambda>0$, so that
  $(e^{\alpha d(\lambda x)}-1)^2\leq (e^{\alpha\lambda}-1)^2$;
  moreover, $|\nabla d|\leq 1$.  Hence,
  by \eqref{quasi} 
\begin{align}\label{quasi1}
  &\int_{(C_\delta\times(0,T))\cap B_1}t^{1-2s} \biggl|\frac{\tilde
    v(\lambda z)}{\lambda^{m_0}} - \frac{ v(\lambda
    z)}{\lambda^{m_0}}\biggr|^2\,dz\\
  \notag &\quad\quad=
           \int_{(C_\delta\times(0,T))\cap B_1}t^{1-2s}(e^{\alpha
           d(\lambda x)}-1)^2
           \biggl|\frac{\tilde
           v(\lambda z)}{\lambda^{m_0}}\biggr|^2\,dz\\
  \notag&\quad\quad\leq (e^{\alpha\lambda}-1)^2
          \int_{(C_\delta\times(0,T))\cap B_1}t^{1-2s}
          \biggl|\frac{\tilde
          v(\lambda z)}{\lambda^{m_0}}\biggr|^2\,dz\to 0 
\end{align}
and
\begin{align}\label{quasi2}
  \int_{(C_\delta\times(0,T))\cap B_1}
  &t^{1-2s} \biggl|\nabla \left(\frac{\tilde
    v(\lambda
    z)}{\lambda^{m_0}}
    \right)- \nabla
    \left(\frac{ v(\lambda
    z)}{\lambda^{m_0}}\right)\biggr|^2\,dz
  \\\notag&\leq
            2 \int_{(C_\delta\times(0,T))\cap B_1}t^{1-2s}(e^{\alpha
            d(\lambda x)}-1)^2
            \biggl|\nabla\left(\frac{\tilde v(\lambda
            z)}{\lambda^{m_0}}\right)\biggr|^2\,dz
  \\
  \notag  &\quad +2\lambda^2\alpha^2
            \int_{(C_\delta\times(0,T))\cap B_1}t^{1-2s}e^{2\alpha
            d(\lambda x)}|\nabla d(\lambda x)|^2
            \biggl|\frac{\tilde
            v(\lambda z)}{\lambda^{m_0}}\biggr|^2\,dz
            \\
  \notag&\leq
          2 (e^{\alpha\lambda}-1)^2\int_{(C_\delta\times(0,T))\cap
          B_1}t^{1-2s} \biggl|\nabla\left(\frac{ \tilde v(\lambda z)}{\lambda^{m_0}}\right)\biggr|^2 \,dz\\
  \notag&\quad + 2\lambda^2\alpha^2e^{2\alpha\lambda}
          \int_{(C_\delta\times(0,T))\cap B_1}t^{1-2s}
          \biggl|\frac{\tilde v(\lambda z)}{\lambda^{m_0}}\biggr|^2\,dz\to 0 
\end{align}
as $\lambda\to 0^+$.  The convergence of
$\lambda^{-m_0}v(\lambda\cdot)$ to $P$ in
$H^1((C_\delta\times(0,T))\cap B_1, t^{1-2s}dz)$ then follows
from \eqref{quasi}, \eqref{quasi1}, and \eqref{quasi2}.

To prove the convergence in
$C^{0,\tau}_{\mathrm{loc}}((\overline{
  C_\delta}\setminus\{0\})\times[0,\infty))$ for some $\tau>0$,
we first observe that, letting
$\hat v_\lambda:=\lambda^{-m_0}v(\lambda\cdot)$, for every
$R,\delta>0$, there exists $\bar\lambda_\delta>0$ such that the family
of functions $\{\hat v_\lambda\}_{\lambda\in(0, \bar\lambda_\delta)}$
is bounded in
$H^1((C_{\delta/3}\times(0,\infty))\cap B_{3R},
t^{1-2s}dz)$. Moreover, each $\hat v_\lambda$ weakly solves
\begin{equation*}
\begin{cases}
  -\mathop{\rm div}(t^{1-2s} \nabla \hat v_\lambda)=0
  &\text{in } \mathcal{C}_{\frac1\lambda\Omega}, \\[4pt]
  -\lim_{t \to 0^+} t^{1-2s}\partial _t \hat v_\lambda =
  \lambda^{2s}h(\lambda x) \mathop{\rm Tr}\hat v_\lambda & \text{on }
  \frac1\lambda\Omega.
\end{cases}
\end{equation*} 
By \eqref{ipotesisuh}, the family
$\{\lambda^{2s}h(\lambda \cdot)\}_{\lambda\in(0, \bar\lambda_\delta)}$
is bounded in $L^p (C_{\delta/3}\cap B_{3R}')$. By \cite[Proposition
2.6, (i)]{JLX} we have that
$\{\hat v_\lambda\}_{\lambda\in(0, \bar\lambda_\delta)}$ is bounded in
$L^\infty(((C_{\delta/2}\setminus B'_{r/2})\times(0,\infty))\cap
B_{2R})$ for any $r\in(0,R)$. In this regard, we point out that
\cite[Proposition 2.6, (i)]{JLX} would yield an estimate with a
constant depending on the $L^p$ norm of
$\lambda^{2s}h(\lambda \cdot)$; however, by analyzing the proof, one
can easily see that in our case such a constant is uniformly bounded
in $\lambda$ since
$\{\lambda^{2s}h(\lambda \cdot)\}_{\lambda\in(0, \bar\lambda_\delta)}$
is bounded in $L^p (C_{\delta/3}\cap B_{3R}')$. Hence,
$\{\lambda^{2s}h(\lambda x) \mathop{\rm Tr}\hat
v_\lambda\}_{\lambda\in(0, \bar\lambda_\delta)}$ is bounded in
$L^p ((C_{\delta/2}\setminus B'_{r/2})\cap B_{2R}')$, so that from
part (iii) of \cite[Proposition 2.6]{JLX} we deduce that
$\{\hat v_\lambda\}_{\lambda\in(0, \bar\lambda_\delta)}$ is bounded in
$C^{0,\rho}(((\overline{C_{\delta}}\setminus B_r)\times[0,\infty))\cap
\overline{B_R})$ for any $r\in(0,R)$ and some $\rho\in(0,1)$. This
yields the conclusion in view of the compact embedding of $C^{0,\rho}$
into $C^{0,\tau}$ for all $0<\tau<\rho$.
\end{proof}
\begin{remark} \label{rem:genarale} Since the analysis conducted so
  far has in fact been based on equation \eqref{eqrisoltadaW} and
  works for any equation of that form provided that $a$ and
  $\mathbf{b}$ are in $L^\infty$, we can allow a drift term and a
  zero-order term in equation \eqref{eqrisoltadaV} right from the
  start, meaning that Theorem \ref{thm:1} and Corollary
  \ref{cor:unique}-(i) hold for any equation of form
    \begin{equation}\label{eq:general}
      -\mathop{\rm div}(t^{1-2s}\nabla v) + t^{1-2s}
      \mathbf{b}(x)\cdot\nabla_x v + t^{1-2s}a(x) v =0,
    \end{equation}
    with
    \begin{equation*}
      \mathbf{b}\in L^{\infty}(\Omega,\R^n)
      \quad\text{and}\quad a\in L^{\infty}(\Omega).
    \end{equation*}        
  \end{remark}
  
  \section{Application to the Neumann and Robin spectral fractional
    Laplacians}\label{cinque} In this section we provide a rigorous
  definition of the Robin spectral fractional Laplacian
  $(-\Delta_{R,\alpha})^s$ of order $s\in (0,1)$ and coefficient
  $\alpha\geq0$, appearing in \eqref{prob:nonloc}. In particular,
  Subsection \ref{subsec:neumann} is devoted to the case $\alpha=0$,
  corresponding to the Neumann spectral fractional Laplacian, while 
  Subsection \ref{subsec:robin} addresses the case $\alpha>0$. For the
  sake of simplicity, we denote $(-\Delta_{R,0})^s$ by
  $(-\Delta_N)^s$.

  \subsection{Extension for the Neumann spectral
    fractional Laplacian}\label{subsec:neumann} Let us consider the
  classical eigenvalue problem for the Neumann Laplacian in a bounded
  open domain $\Omega\subset \R^n$ of class $C^{1,1}$, i.e.
  \begin{equation}\label{prob:nonloc:1}
    \begin{cases}
        -\Delta \phi=\lambda \phi &\mathrm{in \ }\Omega,\\
        \partial_\nu \phi=0 &\mathrm{on \ }\partial\Omega.
      \end{cases}
    \end{equation}
    Classic spectral theory ensures the existence of
    a diverging sequence $\{\lambda_k\}_{k\geq0}$ of nonnegative
    eigenvalues
    of problem \eqref{prob:nonloc:1}, with
    $0=\lambda_0<\lambda_1\leq\lambda_2\leq\cdots\leq\lambda_k\to+\infty$
    counted with their multiplicities. For each $k\geq0$, we can
    choose an eigenfunction $\phi_{k}$ associated with the eigenvalue
    $\lambda_k$ such that $\{\phi_{k}\}_{k\geq0}$ forms an orthonormal basis
    for $L^2(\Omega)$.
    Note that the multiplicity
    of $\lambda_0$ is 1, with $\phi_0\equiv \frac{1}{\sqrt{|\Omega|}}$.
    For a given $s\in(0,1)$, let us define the following linear space:
    \begin{equation*}
    \mathcal H^s(\Omega)=\left\{v\in L^2(\Omega)  :  \int_\Omega
      v(x)\,dx=0\text{ and } \sum_{k=1}^{\infty}\lambda^s_k\langle v,
      \phi_{k}\rangle_{L^2(\Omega)}^2<+\infty\right\},
\end{equation*}
endowed with the scalar product
\begin{equation*}
  \langle v,w\rangle_{\mathcal H^s(\Omega)}=
  \sum_{k=1}^{\infty}\lambda^s_k\langle
  v,\phi_{k}\rangle_{L^2(\Omega)}
  \langle w,\phi_{k}\rangle_{L^2(\Omega)}.
\end{equation*}
We observe that $ \langle \cdot,\cdot\rangle_{\mathcal H^s(\Omega)}$
is indeed a scalar on $\mathcal H^s(\Omega)$ due to the zero--mean
condition imposed on $\mathcal H^s(\Omega)$, which implies
orthogonality to the eigenfunctions
$\{\phi_{k}\}_{k\geq1}$. Furthermore, $\mathcal H^s(\Omega)$ with the
scalar product $\langle \cdot,\cdot\rangle_{\mathcal H^s(\Omega)}$ is
a Hilbert space.

We consider the operator
\begin{equation*}
  \mathcal L^s: \mathcal H^s(\Omega)\to(\mathcal H^s(\Omega))^*,
\end{equation*}
mapping each $v\in \mathcal H^s(\Omega)$ to $\mathcal L^s v$, where $\mathcal L^s v$
belongs to the dual space $(\mathcal H^s(\Omega))^*$ and 
acts as
\begin{equation*}
  \prescript{\mkern-\thinmuskip}{(\mathcal H^s(\Omega))^*}{\langle}
  \mathcal L^s v,w\rangle_{\mathcal H^s(\Omega)}=\langle
  v,w\rangle_{\mathcal H^s(\Omega)}
  \quad\text{for all } w\in \mathcal H^s(\Omega).
\end{equation*}
Following \cite{CapDavDupSir11,Sic24}, see also
\cite{MonPelVer13,Vol16}, given $v\in \mathcal H^s(\Omega)$, we define
its extension on the semi-infinite cylinder
$\mathcal C_\Omega^\infty=\Omega\times (0,+\infty)$ as
\begin{equation}\label{eq:def-extV}
  E_N(v): \mathcal C_\Omega^\infty\to\R,\quad
  E_N(v)(x,t):= V(x,t)=\sum_{k=1}^\infty v_kh_k(t)\phi_k(x),
\end{equation}
where $v_k=\langle v,\phi_k\rangle_{L^2(\Omega)}$ and $h_k$ is the unique solution of the ODE
\begin{equation*}
    \begin{cases}
        h''+\frac{1-2s}{t}h'-\lambda_kh=0 &\mathrm{in \ }(0,+\infty),\\
        h(0)=1,\\
        h(t)\to0\quad \text{as }t\to+\infty.
    \end{cases}
\end{equation*}
As observed in \cite{CapDavDupSir11}, for
every $k\geq1$ $h_k$ satisfies
\begin{equation*}
    -\lim_{t\to0^+}t^{1-2s}h'_k(t)=\kappa_s\lambda_k^s,
\end{equation*}
where $\kappa_s>0$ is a positive constant depending only on $s$.

The extended function $V$ defined in \eqref{eq:def-extV} belongs to
the space
\begin{equation*}
  \mathcal H^1(\mathcal C_\Omega^\infty,t^{1-2s}dz):=\left\{W\in H^1(\mathcal
    C_\Omega^\infty,t^{1-2s}dz)
    :  \int_\Omega W(x,t)dx=0 \quad \text{for every }t>0\right\},
\end{equation*}
satisfies
\begin{equation*}
  \int_{\mathcal C_\Omega^\infty}t^{1-2s}|\nabla V|^2\,dz=\kappa_s
  \left\langle v,v\right\rangle_{\mathcal H^s(\Omega)}
\end{equation*}
and
\begin{equation*}
  \mathop{\rm Tr}V=v
\end{equation*}
 where ${\mathop{\rm Tr}}$ is
 the trace map in \eqref{passoalletracce}.

 This in particular implies
that $\mathcal H^s(\Omega)\subseteq H^s(\Omega)$ with continuous
embedding. Furthermore,
for every $\Phi \in H^1(\mathcal C_\Omega^\infty,t^{1-2s}dz)$
  \begin{equation*}
    \mathop{\rm Tr}\Phi- \textstyle{\dashint}_\Omega \mathop{\rm Tr}\Phi\in
    \mathcal H^s(\Omega)
\end{equation*}
and
\begin{equation*}
  \kappa_s
  \left\langle \mathop{\rm Tr}\Phi- \textstyle{\dashint}_\Omega
    \mathop{\rm Tr}\Phi, \mathop{\rm Tr}\Phi-
    \textstyle{\dashint}_\Omega \mathop{\rm
      Tr}\Phi\right\rangle_{\mathcal H^s(\Omega)}\leq
  \int_{\mathcal C_\Omega^\infty}t^{1-2s}|\nabla \Phi|^2\,dz. 
\end{equation*}
In particular, we have
\begin{equation*}
  \mathcal H^s(\Omega)=\big\{\mathop{\rm Tr}\Phi-
  \textstyle{\dashint}_\Omega
  \mathop{\rm Tr}\Phi: \Phi \in H^1(\mathcal C_\Omega^\infty,t^{1-2s}dz)\big\}.
\end{equation*}
It is possible to show that, if $v\in \mathcal H^s(\Omega)$ and
$V=E_N(v)$ is its extension as in \eqref{eq:def-extV}, then
\begin{equation*}
  \int_{\mathcal C_\Omega^\infty}t^{1-2s}\nabla V\cdot\nabla\Phi=\kappa_s
  \left\langle v,\mathop{\rm Tr}\Phi-{\textstyle{\dashint}}_\Omega
    \mathop{\rm Tr}\Phi\right\rangle_{\mathcal H^s(\Omega)}
  \quad \text{for all $\Phi\in H^1(\mathcal C_\Omega^\infty,t^{1-2s}dz)$},
\end{equation*}
so that $V$ weakly solves
\begin{equation*}
    \begin{cases}
        -\mathop{\rm div}(t^{1-2s}\nabla V)=0 &\text{in }\mathcal C_\Omega^\infty,\\
        \nabla_x V\cdot \nu=0 &\text{on }\partial_L\mathcal
        C_\Omega^\infty:=
        \partial\Omega\times [0,+\infty),\\
        \mathop{\rm Tr}V=v &\text{on }\Omega,\\
        -\lim_{t\to0^+}t^{1-2s}\partial_tV=\kappa_s\, \mathcal L^sv
        &\text{on }\Omega.
    \end{cases}
\end{equation*}
Let us now consider the Hilbert space
\begin{equation*}
  H^s_{N}(\Omega):=\left\{v\in L^2(\Omega)  :
  \sum_{k=1}^{\infty}\lambda^s_k\langle v,\phi_{k}\rangle_{L^2(\Omega)}^2<+\infty\right\},
\end{equation*}
equipped with the scalar product
\begin{equation*}
    \langle v,w\rangle_{H_N^s(\Omega)}=\langle
    v,w\rangle_{L^2(\Omega)}+
    \langle v,w\rangle_{\mathcal H^s(\Omega)}. 
\end{equation*}
We define the Neumann spectral fractional Laplacian as the operator
\begin{equation*}
    (-\Delta_N)^s: H^s_N(\Omega)\to(H^s_N(\Omega))^*
\end{equation*}
acting on any $v\in H^s_N(\Omega)$ as
\begin{align*}
  \prescript{\mkern-\thinmuskip}{(H^s_N(\Omega))^*}{\langle}
  (-\Delta_N)^s v,
  w\rangle_{H^s_N(\Omega)}&=\sum_{k=1}^{\infty}\lambda^s_k\langle
                            v,\phi_{k}
                            \rangle_{L^2(\Omega)}\langle w,\phi_{k}\rangle_{L^2(\Omega)}\\
                          &=
                            \left\langle v-\textstyle{\dashint}_\Omega v,w-
                            \textstyle{\dashint}_\Omega w\right\rangle_{\mathcal H^s(\Omega)}
                            \quad\text{for all } w\in H^s_N(\Omega).
\end{align*}
By a weak solution to problem \eqref{prob:nonloc} with $\alpha=0$ we
mean a function $u\in H^s_N(\Omega)$ such that
\begin{equation}\label{form:deb:1}
   \prescript{\mkern-\thinmuskip}{(H^s_N(\Omega))^*}{\langle}
  (-\Delta_N)^s u,
  \psi\rangle_{H^s_N(\Omega)}=
  \int_{\Omega}\xi(x)u(x)\psi(x)
  \,dx\quad \text{for all $\psi\in H^s_N(\Omega)$}. 
\end{equation}
Notice that, given $u\in H^s_N(\Omega)$, the function
$v=u-\dashint_\Omega u$ belongs to $\mathcal H^s(\Omega)$. Moreover, 
\begin{equation*}
  (-\Delta_N)^su=\mathcal L^s\bigg(u-\dashint_\Omega u\bigg),
\end{equation*}
with the equality understood in $(\mathcal H^s(\Omega))^*$, since all
the eigenfunctions $\phi_k$ with $k\geq 1$ are orthogonal to constant
functions.  We define an extension for $u\in H^s_N(\Omega)$ as
\begin{equation}\label{eq:exteU}
    U(x,t):=\dashint_\Omega u+E_N\bigg(u-\dashint_\Omega u\bigg)(x,t).
\end{equation}
We observe that
$U-\dashint_\Omega u \in H^1(\mathcal{C}_\Omega^\infty,t^{1-2s}dz)$, so that
$U\in H^1(\Omega\times(0,R),t^{1-2s}dz)$ for every $R>0$; moreover, it
weakly solves \begin{equation*}
    \begin{cases}
        -\mathop{\rm div}(t^{1-2s}\nabla U)=0 &\mathrm{in \ }\mathcal
        C_\Omega^\infty,\\
        \nabla_x U\cdot \nu=0 &\mathrm{on \ }\partial_L\mathcal C_\Omega^\infty,\\
        \mathop{\rm Tr}U=u &\mathrm{on \ }\Omega,\\
        -\lim_{t\to0^+}t^{1-2s}\partial_tU=\kappa_s(-\Delta_N)^su &\mathrm{on \ }\Omega.
    \end{cases}
\end{equation*}
This means that 
    \begin{align*}
      \int_{\mathcal C_\Omega^\infty}t^{1-2s}\nabla U\cdot\nabla\Psi\,dz&=\kappa_s
      \left\langle u-{\textstyle{\dashint}}_\Omega u,
          \mathop{\rm Tr}\Psi-{\textstyle{\dashint}}_\Omega
          \mathop{\rm Tr}\Psi\right\rangle_{\mathcal H^s(\Omega)}\\
      &=\kappa_s\prescript{\mkern-\thinmuskip}{(H^s_N(\Omega))^*}{\langle}
  (-\Delta_N)^s u,
  \mathop{\rm Tr}\Psi\rangle_{H^s_N(\Omega)}
      \quad \text{for all $\Psi\in  H^1(\mathcal C_\Omega^\infty,t^{1-2s}dz)$}.
    \end{align*}
In particular, if $u\in H^s_N(\Omega)$  is a weak solution to
  problem \eqref{prob:nonloc} with $\alpha=0$, then its extension $U$
  defined in \eqref{eq:exteU} satisfes
  \begin{equation}\label{eq:weak-neu}
    \int_{\mathcal C_\Omega^\infty}t^{1-2s}\nabla U\cdot\nabla\Psi\,dz
    =\kappa_s\int_{\Omega}\xi(x)u(x)\mathop{\rm Tr}\Psi(x)
  \,dx\quad \text{for all  $\Psi\in  H^1(\mathcal C_\Omega^\infty,t^{1-2s}dz)$}.
  \end{equation}

  \subsection{Extension for the Robin spectral fractional
    Laplacian}\label{subsec:robin}
    Let us consider the following eigenvalue problem for the Laplacian
    subject to Robin boundary condition with parameter $\alpha>0$:
\begin{equation*}
    \begin{cases}
      -\Delta \phi=\lambda \phi &\mathrm{in \ }\Omega,\\
      \partial_\nu \phi+\alpha \,\phi=0 &\mathrm{on \ }\partial\Omega.
    \end{cases}
\end{equation*}
Here $\Omega$ is a bounded domain in $\R^n$ of class $C^{1,1}$.
Classic spectral theory
ensures the existence of a diverging sequence of positive
eigenvalues $\{\lambda_{k,\alpha}\}_{k\geq1}$ with
\begin{equation*}
  0<\lambda_{1,\alpha}<\lambda_{2,\alpha}\leq\lambda_{3,\alpha}
  \leq\cdots\leq\lambda_{k,\alpha}\to+\infty
\end{equation*}
counted with their multiplicities.  For every $k\geq1$, we can select
an eigenfunction $\phi_{k,\alpha}$ corresponding to the eigenvalue
$\lambda_{k,\alpha}$ so that $\{\phi_{k,\alpha}\}_{k\geq1}$ forms an
orthonormal basis for $L^2(\Omega)$. Note that the eigenvalue
$\lambda_{1,\alpha}$ is simple.

For some given $s\in(0,1)$, we consider  the following Hilbert space:
\begin{equation*}
  H^s_{R,\alpha}(\Omega)=\left\{v\in L^2(\Omega)
    : \sum_{k=1}^{\infty}\lambda^s_{k,\alpha}\langle v,\phi_{k,\alpha}
    \rangle_{L^2(\Omega)}^2<\infty\right\},
  \end{equation*}
endowed with the scalar product
\begin{equation*}
  \langle v,w\rangle_{H^s_{R,\alpha}(\Omega)}=
  \sum_{k=1}^{\infty}\lambda^s_{k,\alpha}\langle v,\phi_{k,\alpha}
  \rangle_{L^2(\Omega)}\langle w,\phi_{k,\alpha}\rangle_{L^2(\Omega)}.
\end{equation*}
The Robin spectral fractional Laplacian is defined as the operator
\begin{equation*}
    (-\Delta_{R,\alpha})^s: H^s_{R,\alpha}(\Omega)\to(H^s_{R,\alpha}(\Omega))^*
\end{equation*}
acting on any $v\in H^s_{R,\alpha}(\Omega)$ as
\begin{equation*}
  \prescript{\mkern-\thinmuskip}{(H^s_{R,\alpha}(\Omega))^*}{\langle}
  (-\Delta_{R,\alpha})^s v,w\rangle_{H^s_{R,\alpha}(\Omega)}
  =\langle v,w\rangle_{H^s_{R,\alpha}(\Omega)}\quad\text{for every }
  w\in H^s_{R,\alpha}(\Omega).
\end{equation*}
Following \cite{CapDavDupSir11,Sic24}, given $u\in H^s_{R,\alpha}(\Omega)$, one
can define its extension
\begin{equation}\label{eq:def-extU}
  U(x,t)=E_{R,\alpha}(u)(x,t)=\sum_{k=1}^\infty u_{k,\alpha}
  h_{k,\alpha}(t)\phi_{k,\alpha}(x),
\end{equation}
where $u_{k,\alpha}=\langle u,\phi_{k,\alpha}\rangle_{L^2(\Omega)}$
and $h_{k,\alpha}$ is the unique solution of the ODE
\begin{equation*}
    \begin{cases}
        h''+\frac{1-2s}{t}h'-\lambda_{k,\alpha}h=0 &\mathrm{in \ }(0,+\infty),\\
        h(0)=1,\\
        h(t)\to0\quad \text{as }t\to+\infty.
    \end{cases}
\end{equation*}
By \cite{CapDavDupSir11}, for
every $k\geq1$  we have 
\begin{equation*}
    -\lim_{t\to0^+}t^{1-2s}h'_{k,\alpha}(t)=\kappa_s\lambda_{k,\alpha}^s,
\end{equation*}
where $\kappa_s>0$ is the same positive constant depending only on $s$ appearing in Subsection
  \ref{subsec:neumann}.

After observing that
  \begin{equation*}
    W\mapsto   \bigg( \int_{\mathcal C_\Omega^\infty}t^{1-2s}|\nabla W|^2\,dz+
    \alpha\int_{ \partial_L \mathcal{C}_{\Omega}^\infty}t^{1-2s}
    W^2\,dS\bigg)^{1/2}
\end{equation*}
is a norm on $H^1(\mathcal C_\Omega^\infty,t^{1-2s}dz)$ equivalent to the
standard weighted norm \eqref{eq:normaH1weighted}, direct calculations show that
\begin{equation*}
  U\in H^1(\mathcal C_\Omega^\infty,t^{1-2s}dz);
\end{equation*}
furthermore,
\begin{equation*}
  \int_{\mathcal C_\Omega^\infty}t^{1-2s}|\nabla U|^2\,dz+
    \alpha\int_{ \partial_L \mathcal{C}_{\Omega}^\infty}t^{1-2s}
    U^2\,dS=\kappa_s\langle u,u\rangle_{H^s_{R,\alpha}(\Omega)}
\end{equation*}
and, letting ${\mathop{\rm Tr}}$ be
 the trace map in \eqref{passoalletracce},
\begin{equation*}
  \mathop{\rm Tr}U=u.
\end{equation*}
In particular, this shows that 
$H^s_{R,\alpha}(\Omega)$ is continuously embedded in $H^s(\Omega)$.

One can also show that, 
for every $\Phi \in H^1(\mathcal C_\Omega^\infty,t^{1-2s}dz)$,
  \begin{equation*}
    \mathop{\rm Tr}\Phi\in H^s_{R,\alpha}(\Omega)
\end{equation*}
and
\begin{equation*}
  \kappa_s
  \left\langle \mathop{\rm Tr}\Phi, \mathop{\rm Tr}\Phi
  \right\rangle_{H^s_{R,\alpha}(\Omega)}\leq
  \int_{\mathcal C_\Omega^\infty}t^{1-2s}|\nabla \Phi|^2\,dz
  +
    \alpha\int_{ \partial_L \mathcal{C}_{\Omega}^\infty}t^{1-2s}
    \Phi^2\,dS. 
\end{equation*}
In particular,
$H^s_{R,\alpha}(\Omega)=\big\{\mathop{\rm Tr}\Phi: \Phi \in
H^1(\mathcal C_\Omega^\infty,t^{1-2s}dz)\big\}$.

Finally, we observe that, if $u\in H^s_{R,\alpha}(\Omega)$ and
$U=E_{R,\alpha}(u)$ is its extension as in \eqref{eq:def-extU}, then
$U$ is a weak solution to
\begin{equation*}
    \begin{cases}
      -\mathop{\rm div}(t^{1-2s}\nabla U)=0
      &\text{in }\mathcal C_\Omega^\infty,\\
      \nabla_x U\cdot \nu+\alpha U=0
      &\text{on }\partial_L\mathcal C_\Omega^\infty,\\
      \mathop{\rm Tr}U=u
      &\text{on }\Omega,\\
      -\lim_{t\to0^+}t^{1-2s}\partial_tU=\kappa_s\mathcal
      (-\Delta_{R,\alpha})^su &\text{on }\Omega.
    \end{cases}
\end{equation*}
This means that 
\begin{equation*}
  \int_{\mathcal C_\Omega^\infty}t^{1-2s}\nabla U\cdot\nabla\Phi+
  \alpha\int_{ \partial_L \mathcal{C}_{\Omega}^\infty}t^{1-2s}
    U\Phi\,dS
    =\kappa_s
  \left\langle u,\mathop{\rm Tr}\Phi\right\rangle_{H^s_{R,\alpha}(\Omega)},
\end{equation*}
for all $\Phi\in H^1(\mathcal C_\Omega^\infty,t^{1-2s}dz)$.

By a weak solution to problem \eqref{prob:nonloc} with $\alpha>0$ we
mean a function $u\in H^s_{R,\alpha}(\Omega)$ such that
\begin{equation}\label{form:deb:2}
  \langle u,\psi\rangle_{H^s_{R,\alpha}(\Omega)}=
  \int_{\Omega}\xi(x)u(x)\psi(x)\,dx\quad \text{for all
    $\psi\in H^s_{R,\alpha}(\Omega)$}.
\end{equation}
Hence, if $u\in H^s_{R,\alpha}(\Omega)$  is a weak solution to
  problem \eqref{prob:nonloc} with $\alpha>0$, then its extension $U$
  defined in \eqref{eq:def-extU} satisfes
  \begin{multline}\label{eq:weak-rob}
    \int_{\mathcal C_\Omega^\infty}t^{1-2s}\nabla U\cdot\nabla\Phi\,dz +
    \alpha\int_{ \partial_L \mathcal{C}_{\Omega}^\infty}t^{1-2s} U\Phi\,dS\\
    =\kappa_s\int_{\Omega}\xi(x)u(x)\mathop{\rm Tr}\Psi(x) \,dx\quad
    \text{for all $\Psi\in H^1(\mathcal C_\Omega^\infty,t^{1-2s}dz)$}.
\end{multline}

\subsection{Local Asymptotics and strong unique continuation principle
  for solutions to the spectral fractional Laplacian}

\begin{proof}[Proof of Theorem \ref{thm:2}]
  The proof strongly relies on the continuity of the trace map in
  \eqref{passoalletracce}. Indeed, let $x_0\in\partial\Omega$ and $u$
  be a nontrivial weak solution to \eqref{prob:nonloc} in the sense
  clarified in \eqref{form:deb:1} and \eqref{form:deb:2}. We then
  consider its extension $U$, as defined in \eqref{eq:def-extV} and
  \eqref{eq:def-extU}. Letting $\mathcal C_\Omega$ be as in
  \eqref{eq:cil} for any fixed $T>0$, we have that $U$ belongs to
  $H^1(\mathcal{C}_\Omega,t^{1-2s}dz)$ and weakly solves
  \begin{equation*}
    \begin{cases}
      -\mathop{\rm div}(t^{1-2s}\nabla U)=0
      &\text{in }\mathcal C_\Omega,\\
      \nabla U\cdot \nu+\alpha U=0
      &\text{on  }\partial_L\mathcal C_\Omega,\\
        \mathop{\rm Tr}U=u &\text{on }\Omega,\\
        -\lim_{t\to0^+}t^{1-2s}\partial_tU=\kappa_s\xi u &\text{on  }\Omega,
    \end{cases}
\end{equation*}
because $U$ satisfies \eqref{eq:weak-neu} if $\alpha=0$ and
\eqref{eq:weak-rob} if $\alpha>0$, respectively. In any case, $U$
  is a solution to problem \eqref{eqrisoltadaV}, in the weak
  formulation given in \eqref{eq:wf}, with $h=\kappa_s\xi$.
We observe that
  $U\not\equiv0$, because $\mathop{\rm Tr}U=u\not\equiv0$.
This allows us to apply Theorem \ref{thm:1} to $U$ and deduce that
there exist $m_0\in\N$ (even if $n=1$) and a nontrivial homogeneous
polynomial $P$ of degree $m_0$ (which is invariant under reflection
across the hyperplane $\{(x,t)\in\R^{n+1}:x\cdot {\mathbf N}_0=0\}$)
such that $\mathop{\rm div}(|t|^{1-2s}\nabla P)=0$ in $\R^{n+1}$,
and, for any $\delta>0$,
    \begin{equation*}
      \frac{U(z_0+\lambda z)}{\lambda^{m_0}}\to P(z)\quad
      \text{as $\lambda\to 0^+$},
    \end{equation*}
    in $H^1((C_\delta\times(0,T)) \cap B_1,t^{1-2s}dz)$ and in
    $C^{0,\tau}_{\mathrm{loc}}((\overline{
      C_\delta}\setminus\{0\})\times[0,\infty))$ for some
    $\tau\in(0,1)$, where $z_0=(x_0,0)$ and $C_\delta$ is defined in
    \eqref{eq:def_cono}.  From this and the continuity of the trace
    map in \eqref{passoalletracce}, we conclude that
    \eqref{convaPtilde} holds with
    \begin{equation}\label{PtildePx0}
        \tilde P(x) = P(x,0).
    \end{equation}
    To complete the proof, it remains to show that $\tilde P$ is
    nontrivial.
If, by contradiction, $\tilde P$ vanishes identically in
    $\R^n$, then the function
    \begin{equation*}
        \hat P(x,t)=      \begin{cases}
          P(x,t)&\text{if $t\geq0$},\\
          0&\text{if $t<0$},          
      \end{cases}
    \end{equation*}
    solves the equation $\mathop{\rm div}(|t|^{1-2s}\nabla \hat P)=0$
    in $\R^{n+1}$, being non-zero yet vanishing on an entire
    half-space $\R_+^{n+1}$. This contradicts well-known unique
    continuation principles for elliptic equations with Muckenhoupt
    weights, see \cite{GarLin86} and \cite{TaoZhang08}.
  The proof is now complete.
\end{proof}

\section*{Acknowledgments}
The authors are members of the GNAMPA research group of INdAM-Istituto
Nazionale di Alta Matematica. The first and the third authors are
partially supported by the INdAM--GNAMPA 2026 project ``Struttura fine e
regolarit\`a in problemi variazionali non-lineari'' (CUP E53C25002010001). The second author is partially supported by the
INdAM--GNAMPA 2026 project ``Asymptotic analysis of variational problems''
(CUP E53C25002010001).

\end{document}